\tikzstyle{startstop} = [rectangle, rounded corners, minimum width=1cm, minimum height=1cm,text centered, draw=black]
\tikzstyle{io} = [rectangle, rounded corners, minimum width=1cm, minimum height=1cm,text centered, draw=black]
\tikzstyle{process} = [rectangle, rounded corners, minimum width=1cm, minimum height=1cm,text centered, draw=black]
\tikzstyle{decision} = [rectangle, rounded corners, minimum width=1cm, minimum height=1cm,text centered, draw=black]
\tikzstyle{explain} = [minimum width=1cm, minimum height=0.5cm,text centered, draw=black]
\tikzstyle{equations} = []
\tikzstyle{arrow} = [thick,->,>=stealth]
\tikzstyle{d-arrow} = [thick,->,dashed,>=stealth]
\numberwithin{equation}{section}
\let\al=\alpha
\let\d=\delta
\let\ep=\epsilon
\let\f=\frac
\let\Om=\Omega
\let\th=\theta
\let\pa=\partial
\let\pa=\partial
\newcommand\reallywidehat[1]{%
\savestack{\tmpbox}{\stretchto{%
  \scaleto{%
    \scalerel*[\widthof{\ensuremath{#1}}]{\kern-.6pt\bigwedge\kern-.6pt}%
    {\rule[-\textheight/2]{1ex}{\textheight}}
  }{\textheight}%
}{0.5ex}}%
\stackon[1pt]{#1}{\tmpbox}%
}
\def\dive{\mathop{\rm div}\nolimits}
\newcommand{\beq}{\begin{equation}}
\newcommand{\eeq}{\end{equation}}
\newcommand{\ben}{\begin{eqnarray}}
\newcommand{\een}{\end{eqnarray}}
\newcommand{\beno}{\begin{eqnarray*}}
\newcommand{\eeno}{\end{eqnarray*}}
\newtheorem{theorem}{Theorem}[section]
\newtheorem{lemma}[theorem]{Lemma}
\newtheorem{proposition}[theorem]{Proposition}
\newtheorem{remark}[theorem]{Remark}
\begin{document}

\title[]{Improved stability threshold of the Two-Dimensional Couette flow for Navier-Stokes-Boussinesq Systems via quasi-linearization}

\author{Binqian Niu}
\address[B. Niu]{
School of Mathematical Science, Shanghai Jiao Tong University, Shanghai, 200240, P.R.China.}
\email{bqbling@sjtu.edu.cn}
\author{Weiren Zhao}
\address[W. Zhao]{Department of Mathematics, New York University Abu Dhabi, Saadiyat Island, P.O. Box 129188, Abu Dhabi, United Arab Emirates.}
\email{zjzjzwr@126.com, wz19@nyu.edu}
\maketitle

\begin{abstract}
In this paper, we improve the size requirement of the perturbations for the asymptotic stability of the Couette flow in stratified fluids governed by the two-dimensional Navier-Stokes-Boussinesq system. More precisely, the size of perturbed temperature is improved to $\nu^{2/3}$ from $\nu^{5/6}$ in the paper of Zhang and Zi [J. Math. Pure. Anal. 179:123-182 (2023)]. The idea is the quasi-linearization. The main system is decomposed into two or more equations: a good equation (might be linear) that carries the regularity and size of the initial data, and some quasi-linear and nonlinear equations that contain the nonlinear part, which start from zero initial data. 
\end{abstract}

\section{Introduction}
In this paper, we consider the two-dimensional Navier-Stokes-Boussinesq system on the infinite periodic channel $\mathbb T\times \mathbb R$:
\begin{equation}\label{eq:1}
    \begin{cases}
        \pa_tv+v\cdot\nabla  v+\nabla P-\nu\Delta v=-\varrho e_2,\\
        \pa_t\varrho+v\cdot\nabla\varrho-\nu\Delta\varrho=0,\quad \dive\, v=0,\\
        v(0,x,y)=v_{\mathrm{in}}(x,y),\quad \varrho(0,x,y)=\varrho_{\mathrm{in}}(x,y),
    \end{cases}
\end{equation}
where $v=(v^1(t,x,y),v^2(t,x,y))$ is the divergence-free velocity field, $\varrho=\varrho(t,x,y)$ is the temperature, and $P=P(t,x,y)$ is the pressure. Moreover, $e_2=(0,1)^{\mathsf{T}}$ is the unit vector in the vertical direction. Here the small constant $\nu$ is the viscosity coefficient and the thermal diffusivity. 

For any constants $\rho_0,p_0\in\mathbb R$, 
$
    v_s=(y,0)^{\mathsf{T}},\  \varrho_s=\rho_0,\  P_s=\rho_0y+p_0
$
is a class of steady solutions to system \eqref{eq:1}. In this paper, we study the asymptotic stability of these steady solutions. We now introduce the perturbations
\begin{align*}
    u=v-(y,0)^{\mathsf{T}}, \quad \th=\varrho-\rho_0,\quad p=P-(\rho_0y+p_0).
\end{align*}
Then $(u,\th,p)$ satisfies
\begin{equation}
    \begin{cases}
        \pa_tu+y\pa_xu+\begin{pmatrix}
            u^2\\
            0
        \end{pmatrix}+u\cdot\nabla u+\nabla p-\nu\Delta u=\begin{pmatrix}
            0\\
            -\th
        \end{pmatrix},\\
        \pa_t\th+y\pa_x\th+u\cdot\nabla\th-\nu\Delta\th=0,\quad \mathrm{div}u=0,\\
        u(0,x,y)=u_{\mathrm{in}}(x,y),\quad \th(0,x,y)=\th_{\mathrm{in}}(x,y).
    \end{cases}
\end{equation}
The vorticity $\omega=\pa_xu^2-\pa_yu^1$ and the associated stream function $\psi$ satisfy
\begin{equation}\label{eq:o}
    \begin{cases}
        \pa_t\omega+y\pa_x\omega+u\cdot\nabla\omega-\nu\Delta\omega=\pa_x\th,\\
        \pa_t\th+y\pa_x\th+u\cdot\nabla\th-\nu\Delta\th=0,\\
        u=\nabla^{\perp}\psi=(-\pa_y\psi,\pa_x\psi),\\
        \Delta\psi=\omega=\pa_xu^2-\pa_yu^1.
    \end{cases}
\end{equation}
We study the following stability threshold problem.

{\it{Given norms $\|\cdot\|_{Y_1}$ and $\|\cdot\|_{Y_2}$, find $\al=\al(Y_1,Y_2)$ and $\beta=\beta(Y_1,Y_2)$ such that
\begin{align*}
&\|u_{\mathrm{in}}\|_{Y_1}\leq \nu^{\al}\ \text{and} \ \|\th_{\mathrm{in}}\|_{Y_2}\leq \nu^{\beta}\ \Rightarrow \text{stability};\\
&\|u_{\mathrm{in}}\|_{Y_1}\gg \nu^{\al}\ \text{or} \ \|\th_{\mathrm{in}}\|_{Y_2}\gg \nu^{\beta}\ \Rightarrow \text{instability}.
\end{align*}}}
In \cite{DengWuZhang2021, MasmoudiZhaiZhao2022}, the authors proved the asymptotic stability of the steady state if the initial perturbations $(u_{\mathrm{in}},\th_{\mathrm{in}})$ satisfy 
\begin{align*}
\|u_{\mathrm{in}}\|_{H^2}\leq \epsilon_0\nu^{\f12},\ 
 \|\th_{\mathrm{in}}\|_{H^1}+ \||D_x|^{\f16}\th_{\mathrm{in}}\|_{H^1}\leq \epsilon_1\nu^{\f{11}{12}}, 
\end{align*}
for both the finite channel ($\Om=\mathbb T\times[-1,1] $) and the infinite channel ($\Om=\mathbb T\times \mathbb R$) cases. For the infinite channel case, the threshold obtained in \cite{DengWuZhang2021} was improved by Zhang-Zi \cite{zhang2023stability} to 
\begin{align*}
\|u_{\mathrm{in}}\|_{H^{s+1}}\leq \epsilon_0\nu^{\f13},\ 
 \|\th_{\mathrm{in}}\|_{H^s}+ \nu^{\f16}\||D_x|^{\f13}\th_{\mathrm{in}}\|_{H^s}\leq \epsilon_1\nu^{\f{5}{6}},
\end{align*}
where $s>7$. The main ideas in \cite{zhang2023stability} are 
\begin{itemize}
    \item to apply the time-dependent Fourier multipliers to obtain the inviscid damping and enhanced dissipation for the highest Sobolev norm;
    \item to apply a time-dependent Fourier multiplier introduced in \cite{masmoudizhao2022} to capture the growth from the reaction terms.
\end{itemize}
This paper aims to improve the size requirement in the aforementioned result by introducing the quasi-linearization method. Our main theorem states as follows:
\begin{theorem}\label{main-thm}
    Let $m>5$. There is $\epsilon_1>0$ independent of $\nu$, such that for any $\epsilon_0\in (0, \epsilon_1)$, if $\|u_{\mathrm{in}}\|_{H^{m+1}}\leq \epsilon_0\nu^{\f{1}{3}}$ and $\|\langle \partial_x\rangle \theta_{\mathrm{in}}\|_{H^m}\leq \epsilon_0\nu^{\f23}$, then for all $t\geq 0$, the solutions to \eqref{eq:o} satisfy
    \begin{align*}
        &\|\omega_0\|_{L^2}+\nu^{-\f13}\|\theta_0\|_{L^2}\leq  C\epsilon_0\nu^{\f13},\\
        &\|u^1_{\neq}\|_{L^2}+(1+t)\|u^2\|_{L^2}\leq \f{C\epsilon_0\nu^{\f13}}{1+t} e^{-\delta_0\nu^{\f13}t},\\
        &\|\omega_{\neq}\|_{L^2}+\nu^{-\f13}\|\theta_{\neq}\|_{L^2}\leq C\epsilon_0\nu^{\f13} e^{-\delta_0\nu^{\f13}t},
    \end{align*}
    for some constant $C$ independent of $t, \nu$. Here $f_0=\f{1}{|\mathbb{T}|}\int_{\mathbb{T}}f(t,x,y)dx$ is the zero mode of $f$ and $f_{\neq}=f-f_0$ is the non-zero mode of $f$. 
\end{theorem}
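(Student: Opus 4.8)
The plan is to run an energy method in coordinates adapted to the Couette flow, with carefully built time-dependent Fourier multipliers, organized around a quasi-linearization that separates the data-carrying linear dynamics from the nonlinear feedback. First I would change to the moving frame $(z,y)=(x-ty,y)$, under which $\pa_t+y\pa_x$ becomes $\pa_t$ and $\Delta$ becomes $\Delta_t=\pa_z^2+(\pa_y-t\pa_z)^2$, whose Fourier symbol on the non-zero modes is $-(k^2+(\eta-tk)^2)$. This symbol is the engine of the estimates: recovering the velocity from $\om$ through $\Delta_t\psi=\om$ divides by it, producing the $(1+t)^{-1}$ decay of $u^1_\nq$ and the $(1+t)^{-2}$ decay of $u^2$, while integrating $\nu(k^2+(\eta-tk)^2)$ in time yields the enhanced dissipation factor $e^{-\delta_0\nu^{1/3}t}$. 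I would then introduce a weighted $H^m$ energy in which the temperature carries the extra weight $\nu^{-1/3}$ forced by the target estimates, dressed with multipliers $M(t,k,\eta)$ whose negative logarithmic derivative $-\dot M/M$ is engineered to dominate the dangerous reaction contributions.

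Following the stated strategy, I would decompose $(\om,\th)=(\om^L,\th^L)+(\om^R,\th^R)$. The good part $(\om^L,\th^L)$ solves the linear Couette--Boussinesq system, namely the transport--diffusion equations with the linear buoyancy forcing $\pa_x\th^L$ in the vorticity equation, started from the full initial data; it carries the regularity $H^{m+1}$, $H^m$ and the sizes $\epsilon_0\nu^{1/3}$, $\epsilon_0\nu^{2/3}$. The remainder $(\om^R,\th^R)$ starts from zero and is driven by the quadratic source $u^L\cdot\na\om^L$, the quasi-linear terms that are linear in $(\om^R,\th^R)$ with coefficients built from the good part, and the genuinely nonlinear $R$–$R$ interactions. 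The decisive point is that $(\om^R,\th^R)$ is fed only by quadratic interactions, hence is of smaller size than the data, and this is what permits the relaxation of the temperature threshold from $\nu^{5/6}$ to $\nu^{2/3}$.

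For the good part I would estimate the linear evolution essentially sharply. The inviscid damping and the enhanced dissipation for the top Sobolev norm come from the symbol above, while the growth generated by the reaction term $\pa_x\th^L$, i.e. the exchange of energy between temperature and vorticity near the critical times $t\approx\eta/k$, is absorbed by the time-dependent multiplier of \cite{masmoudizhao2022}; this simultaneously yields the weighted balance between $\om^L$ and $\nu^{-1/3}\th^L$. The zero modes $\om_0$ and $\th_0$ obey heat equations with only quadratic forcing, so they are controlled directly and give the first line of the conclusion.

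Finally I would close a bootstrap on the weighted energy of $(\om^R,\th^R)$, assuming the target decay rates on $[0,T]$ and improving the constants. The transport nonlinearities are handled by pairing the inviscid-damping decay of $u$ against the available regularity, with the enhanced dissipation supplying the time-integrable factor. The main obstacle will be the interplay of the buoyancy coupling with the reaction mechanism: the contribution of $\pa_x\th^R$ to the vorticity, together with the resonant interactions at the critical times, forces the multiplier $M$ to be chosen so that its dissipative defect exactly cancels these terms, and this cancellation must remain consistent with the $\nu^{-1/3}$ weight on the temperature. Verifying that the quadratic remainder then closes at size $\epsilon_0\nu^{2/3}$ rather than $\nu^{5/6}$ is the crux of the argument. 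Once the bootstrap closes, the stated $L^2$ bounds follow by summing over the modes and reverting the change of variables.
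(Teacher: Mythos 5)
Your overall framework (time-dependent multipliers, enhanced dissipation, inviscid damping, a good-part/remainder splitting with the remainder starting from zero data) matches the paper's, but your choice of the good part is where the argument breaks. You take $(\omega^L,\theta^L)$ to solve the \emph{purely linear} Couette--Boussinesq system, so that the remainder $(\omega^R,\theta^R)$ is forced by \emph{all} quadratic L--L interactions, in particular by the zero-mode transport terms $u_0^{L,1}\pa_x\omega^L_{\nq}$ and $u_0^{L,1}\pa_x\theta^L_{\nq}$. These terms are not perturbatively small in $\nu$: the zero mode $u_0^{L,1}$ enjoys neither inviscid damping nor enhanced dissipation (it relaxes only on the heat time scale $\nu^{-1}$), so it persists at size $\epsilon_0\nu^{1/3}$, while $\omega^L_{\nq},\theta^L_{\nq}$ decay only like $e^{-\delta_0\nu^{1/3}t}$. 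The time-integrated forcing fed into the remainder is therefore of order $\epsilon_0^2\nu^{2/3}\cdot\nu^{-1/3}=\epsilon_0^2\nu^{1/3}$ for the vorticity and $\epsilon_0^2\nu\cdot\nu^{-1/3}=\epsilon_0^2\nu^{2/3}$ for the temperature, i.e.\ $(\omega^R,\theta^R)$ is of the \emph{same size in powers of $\nu$} as $(\omega,\theta)$ itself, with a gain only in $\epsilon_0$. That destroys the point of the decomposition: the remainder equation genuinely loses powers of $\nu$ (the buoyancy term $\pa_x\theta^R$ costs $\nu^{-1/3}$ through the dissipation balance, and $u^{R,2}\pa_y\theta^L$ acts as a reaction term), so an $\epsilon_0$-gain cannot absorb these losses and the bootstrap cannot close at the claimed sizes --- you are back at the $\nu^{5/6}$ threshold you set out to improve. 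This is exactly why the paper's good system \eqref{eq:i} is \emph{quasi-linear} rather than linear: it keeps the full self-interaction $u^{\mathrm{i}}\cdot\nabla\omega^{\mathrm{i}}$ in the vorticity equation and the zero-mode transport $u_0^{\mathrm{i},1}\pa_x\theta^{\mathrm{i}}$ in the temperature equation (both can be propagated there, together with extra $x$-derivatives, by the commutator lemmas), so that the \emph{only} quadratic i--i forcing left in the remainder system \eqref{eq:e} is $u_{\nq}^{\mathrm{i}}\cdot\nabla\theta^{\mathrm{i}}$, which by inviscid damping decays like $\epsilon_0^2\nu/(1+t^2)$ (Remark \ref{force}) and is time-integrable with total $O(\epsilon_0^2\nu)$.

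A second ingredient you are missing: even with the correct decomposition, the paper cannot close the remainder estimates with one global set of norms. It uses different sizes and regularities on the short-time region $t\le\nu^{-1/6}$, where a full $\pa_x$-derivative on $\theta^{\mathrm{e}}$ is propagated at the cost of only $\nu^{-1/6}$, and on the long-time region $t\ge\nu^{-1/6}$, where inviscid damping supplies the factor $\nu^{1/6}$ needed for the dissipation to absorb $u^{\mathrm{i},2}\pa_y\theta^{\mathrm{e}}$ and for the reaction-type term $u^{\mathrm{e},2}\pa_y\theta^{\mathrm{i}}$ to be handled by the modified multiplier $\mathcal{M}_3$ (with $\mu$ extra $x$-derivatives). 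Your single bootstrap ``assuming the target decay rates and improving the constants'' glosses over precisely the terms --- $\pa_x\theta^{\mathrm{e}}$ in the vorticity equation and $u^{\mathrm{e},2}\pa_y\theta^{\mathrm{i}}$ --- whose treatment forces this two-region structure; identifying that cancellation as ``the crux'' is correct, but the proposal does not contain the mechanism that achieves it.
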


\begin{remark}
    The key difficulty in this problem is the $1/3$ derivative loss caused by the buoyancy effect, namely $\partial_x\theta$. The main idea in the proof contains two aspects: 1. We use the quasi-linearization; 2. We introduce different sizes and regularities in different time regions in the estimates of the error part. 
    
    The quasi-linearization makes this buoyancy term $\partial_x\theta$ smaller, so that even if we use the dissipation to absorb the additional derivative which could lead to a large amplification, the solution is still small enough. We refer to section \ref{sec:ql-method} for more details. We believe that this method could be used to improve many threshold results related to shear flows. 

    Without losing size, the system can not propagate additional derivatives in only the $x$ direction. The key observation is that for the short-time region, we can lose less size and even propagate more derivatives in the $x$ direction, and in the long-time region, the inviscid damping offers us an additional smallness, which allows us to use the dissipation. 
\end{remark}

\begin{remark}
    To simplify the proof, we modify the Fourier multipliers in \cite{weizhang2023nonlinear} to capture the nonlinear growth. One can also apply the linear change of coordinates and apply the Fourier multipliers in \cite{masmoudizhao2022, zhang2023stability}. The modification is discussed in section \ref{sec:ql-method}. 
\end{remark}

\begin{remark}
    The size requirement in this paper seems to be optimal for perturbations in Sobolev spaces, provided that the $\nu^{\f13}$ threshold for Navier-Stokes equation \cite{masmoudizhao2022, weizhang2023nonlinear} is optimal. One may regard the estimates for the solutions to \eqref{eq:i} as strong evidence for the size requirement of perturbed temperature, see Proposition \ref{prop: short-time-i} and Proposition \ref{prop: long-time i}. We also refer to \cite{LiMasmoudiZhao2022asymptotic} for the asymptotic stability of two-dimensional Couette flow for the Navier-Stokes equation with a threshold less than $1/3$. One can also expect a larger size of perturbations in the Gevrey class that can lead to asymptotic stability for the Navier-Stokes-Boussinesq system. 
\end{remark}

\begin{remark}
    We also mention some other recent results related to the stability of Couette flow in stratified fluid with different background densities in different dimensions \cite{bedrossian2023nonlinear,zelati2024stability, Jiaxin-2024-pre, MSZ-2022-ARMA, sinambela2024asymptotic, SZZ-2024-2d, ZhaiZhao2022}. 
\end{remark}

\begin{remark}
    Our proof also works for the case when the thermal diffusivity for the temperature equation is comparably different from the viscosity. 
\end{remark}

\subsection{Notation}
We define the time-dependent elliptic operator for $t\geq 0$,
\begin{align*}
    \Lambda_t^2=1-\pa_x^2-(\pa_y+t\pa_x)^2.
\end{align*}
In terms of its symbol, $\Lambda_t^2(k,\xi)=1+k^2+(\xi+tk)^2$. It is easy to check that for any $b\in\mathbb R$, $\Lambda_t^b(\pa_t+y\pa_x)=(\pa_t+y\pa_x)\Lambda_t^b$, namely, $[\Lambda_t^b, \pa_t+y\pa_x]=0$, and for $s>1$,
\begin{align*}
    \|f(t)\|_{L^{\infty}}\leq C\|\hat{f}(t)\|_{L^1}\leq C\|\Lambda_t^sf(t)\|_{L^2}.
\end{align*}
Due to the different behaviors of the zero mode and nonzero modes, we split the velocity into two parts:
\begin{align*}
    u=P_0u+P_{\neq}u=(u_0^1,0)+(u_{\neq}^1,u_{\neq}^2),
\end{align*}
where
\begin{align*}
    u_0^1=-\pa_y(-\pa_y^2)^{-1}\omega_0,\quad u_{\neq}^1=-\pa_y(-\Delta)^{-1}\omega_{\ne},\quad u_{\neq}^2=\pa_x(-\Delta)^{-1}\omega_{\neq}.
\end{align*}

\subsection{Quasi-linearization method}\label{sec:ql-method}

In this section, we discuss the quasi-linearization method, which has been used in many papers, especially to deal with the boundary problem \cite{chen_Li_Wei_Zhang2018transition, chen2024transition, Weizhang-slip-2024}. In \cite{ ZhaiZhao2022}, the authors use this method to decompose the zero-mode into a small part and a large part with higher regularity. The main idea of quasi-linearization is to decompose the equation into two or more equations: a good equation (might be linear) that carries the regularity and size of the initial data, some quasi-linear and nonlinear equations that contain the smaller nonlinear part, which start from zero initial data.
We decompose system \eqref{eq:o} into the following two equations: Let
\begin{align*}
    \omega=\omega^{\mathrm{i}}+\omega^{\mathrm{e}},\quad \th=\th^{\mathrm{i}}+\th^{\mathrm{e}}.
\end{align*}
Here $(\omega^{\mathrm{i}}, \th^{\mathrm{i}})$ solves
\begin{equation}\label{eq:i}
    \begin{cases}
        \pa_t\omega^{\mathrm{i}}+y\pa_x\omega^{\mathrm{i}}-\nu\Delta\omega^{\mathrm{i}}+u^{\mathrm{i}}\cdot\nabla\omega^{\mathrm{i}}=\pa_x\theta^{\mathrm{i}},\\
        \pa_t\th^{\mathrm{i}}+y\pa_x\th^{\mathrm{i}}+u^{\mathrm{i}, 1}_0\partial_x\theta^{\mathrm{i}}-\nu\Delta\th^{\mathrm{i}}=0,\\
        \pa_tu^{\mathrm{i}, 1}_0-\nu\pa_y^2u^{\mathrm{i}, 1}_0=P_0(u^{\mathrm{i}}_{\neq}\cdot\nabla u_{\neq}^{\mathrm{i}, 1})\\
        \omega^{\mathrm{i}}(0,x,y)=\omega_{\mathrm{in}}(x,y),\quad \th^{\mathrm{i}}(0,x,y)=\th_{\mathrm{in}}(x,y).
    \end{cases}
\end{equation}
And $(\omega^{\mathrm{e}}, \th^{\mathrm{e}})$ solves
\begin{equation}\label{eq:e}
    \begin{cases}
        \pa_t\omega^{\mathrm{e}}+y\pa_x\omega^{\mathrm{e}}-\nu\Delta\omega^{\mathrm{e}}+u^{\mathrm{i}}\cdot\nabla\omega^{\mathrm{e}}+u^{\mathrm{e}}\cdot\nabla\omega^{\mathrm{i}}+u^{\mathrm{e}}\cdot\nabla\omega^{\mathrm{e}}=\pa_x\th^{\mathrm{e}},\\
        \pa_t\th^{\mathrm{e}}+y\pa_x\th^{\mathrm{e}}-\nu\Delta\th^{\mathrm{e}}+u_{\neq}^{\mathrm{i}}\cdot\nabla \th^{\mathrm{i}}+u^{\mathrm{i}}\cdot\nabla\th^{\mathrm{e}}+u^{\mathrm{e}}\cdot\nabla\th^{\mathrm{e}}+u^{\mathrm{e}}\cdot\nabla\th^{\mathrm{i}}=0,\\
        \omega^{\mathrm{e}}(0,x,y)=0,\quad \th^{\mathrm{e}}(0,x,y)=0.
    \end{cases}
\end{equation}
Here, we highlight the main idea in the decomposition. We first have the following two facts: 
\begin{itemize}
    \item If $\theta=0$, then the $\nu^{1/3}$ size for vorticity is enough for the asymptotic stability \cite{masmoudizhao2022, weizhang2023nonlinear}, which also seems to be optimal. 
    \item The temperature $\theta$ has enhanced dissipation. Without the derivative loss in the buoyancy term $\partial_x\theta$, a natural size requirement of temperature is $\nu^{2/3}$. The additional $\nu^{1/6}$ size loss in \cite{MasmoudiZhaiZhao2022, zhang2023stability} is used to propagate the additional $1/3$ derivative in the $x$ direction. 
\end{itemize}
We first introduce the equation of $\theta^{\mathrm{i}}$ which carries the information of the initial temperature and propagates more regularity in the $x$ direction. It is natural to use the linearized equation. Here, we modify the equation by adding the interaction with the zero mode, namely, $u^{\mathrm{i}, 1}_0\partial_x\theta^{\mathrm{i}}$, since it decays slower than the non-zero modes but can propagate additional derivatives in the $x$ direction. Then we discuss the equation of $(\theta^{\mathrm{e}}, \omega^{\mathrm{e}})$ \eqref{eq:e} which has zero initial data. The forcing term $u_{\neq}^{\mathrm{i}}\cdot \nabla \theta^{\mathrm{i}}$ will determine the size of $(\theta^{\mathrm{e}}, \omega^{\mathrm{e}})$. An easy calculation (see Remark \ref{force}) gives that the forcing term 
$\|\Lambda_t^n(u_{\neq}^{\mathrm{i}}\cdot\nabla \th^{\mathrm{i}})\|_{L^2}\lesssim \frac{\ep_0^2\nu}{t^2+1}$ is good (with enough decay rate and smallness).

We then expect a smaller size of $(\theta^{\mathrm{e}}, \omega^{\mathrm{e}})\lesssim (\nu, \nu^{\f23})$ than that of $(\theta^{\mathrm{i}}, \omega^{\mathrm{i}})\lesssim (\nu^{\f23},\nu^{\f13})$ formally. However, to ensure the linear forcing term with derivative loss $\partial_x\theta^{\mathrm{e}}$ and the quasi-linear terms $u^{\mathrm{i}}\cdot\nabla(\omega^{\mathrm{e}},\theta^{\mathrm{e}})$ and $u^{\mathrm{e}}\cdot\nabla(\omega^{\mathrm{i}}, \theta^{\mathrm{i}})$ do not increase the size, we have to treat them carefully. A classical discussion in \cite{zhang2023stability, MasmoudiZhaiZhao2022} gives us that a workable size assumption for this system is 
\begin{align*}
    (\theta^{\mathrm{e}}, |D_x|^{\f13}\theta^{\mathrm{e}}, \omega^{\mathrm{e}})\lesssim (\nu^{\beta+\f12},\nu^{\beta+\f13}, \nu^{\beta}),
\end{align*}
for some $1/2>\beta\geq \f13$. The main argument is that the propagation of additional $1/3$ derivative in the $x$ direction breaks the transport structure, for example $u^{\mathrm{i},2}\partial_y \theta^{\mathrm{e}}$, which requires additional $\nu^{\f16}$ size assumption due to the natural balance from the diffusion or enhanced dissipation, namely, $\nu k^2$, where $k$ is the wave number with respect to the $x$ variable. 
The other problematic term is $u^{\mathrm{e},2}\partial_y\theta^{\mathrm{i}}$, which could be treated as the reaction term. Our observation is for short-time region $t\leq \nu^{-\f16}$, such a balance between $k$ and $\nu$ is weak. Indeed, we can even propagate one additional derivative in the $x$ direction with only causing $\nu^{-\f16}$ loss, which allows us the show that $\partial_x\theta^{\mathrm{e}}\lesssim \nu^{1-\f16}$. Then the linear forcing term $\partial_x\theta^{\mathrm{e}}$ in the equation of $\omega^{\mathrm{e}}$ only cause a total $\nu^{-\f16-\f16}$ loss for the time region $[0,\nu^{-\f16}]$. Thus we have
\begin{align*}
    (\theta^{\mathrm{e}}, \partial_x\theta^{\mathrm{e}}, \omega^{\mathrm{e}})\lesssim (\nu,\nu^{\f56}, \nu^{\f23}),\quad t\in [0,\nu^{-\f16}]. 
\end{align*}
 We observe that the transport structure is less important if the velocity field has a size less than $\nu^{\f12}$. For $t\geq \nu^{-\f16}$, we use inviscid damping and dissipation, namely, 
 \begin{align*}
     \|\Lambda_t^n(u^{\mathrm{i},2}\partial_y \theta^{\mathrm{e}})\|_{L^1L^2}
     &\lesssim \|\Lambda_t^nu^{\mathrm{i},2}\|_{L^2L^2}
     \|\partial_y \Lambda_t^n \theta^{\mathrm{e}}\|_{L^2L^2}\\
     &\lesssim \left\|\f{1}{1+t}\Lambda_t^{n+1}u_{\neq}^{\mathrm{i},1}\right\|_{L^2L^2}\|\partial_y \Lambda_t^n \theta^{\mathrm{e}}\|_{L^2L^2}\\
     &\lesssim \nu^{\f16}\left\|\Lambda_t^{n+1}u_{\neq}^{\mathrm{i},1}\right\|_{L^2L^2}\|\partial_y \Lambda_t^n \theta^{\mathrm{e}}\|_{L^2L^2}.
 \end{align*}
 We only need to treat the same problematic term  $u^{\mathrm{e},2}\partial_y\theta^{\mathrm{i}}$, which behaves as $\langle t\rangle u^{\mathrm{e},2}\theta^{\mathrm{i}}$. We assume the size of $\omega^{\mathrm{e}}$ is $\nu^{\beta}$. We consider the temperature equation. On one hand, formally speaking, this term is of size $\nu^{\beta+\f23}$. On the other hand, this term has a similar behavior as the reaction term, which could lose $\nu^{\f13}$ as $t$ is large. Thus, we expect the size of $\theta^{\mathrm{e}}$ is smaller than $\nu^{\beta+\f13}$. For the linear forcing term, which requires that $|D_x|^{\f13}\theta^{\mathrm{e}}$ is of a size smaller than $\nu^{\beta+\f13}$. Therefore we use a different size for $t\geq \nu^{-\f16}$, namely, 
\begin{align*}
    (\theta^{\mathrm{e}}, |D_x|^{\f13}\theta^{\mathrm{e}}, \omega^{\mathrm{e}})\lesssim (\nu^{\f56}, \nu^{\f56}, \nu^{\f12}),\quad t\in [\nu^{-\f16},\infty).
\end{align*}

At last, we want to point out that to propagate additional $1/3$ derivative, we study the simplified equation $\partial_t\theta^{\mathrm{e}}=u^{\mathrm{e},2}\partial_y\theta^{\mathrm{i}}$ and derive a toy model in the critical time interval $t\in [\f{2\eta}{2k+1}, \f{2\eta}{2k-1}]$:
\begin{align*}
    \partial_t f_1\approx \frac{|k|^{\f13}f_2 }{k^2+(\eta-kt)^2}(\nu^{\f23}te^{-c\nu^{\f13}t}). 
\end{align*}
Here $f_1$ represents the temperature $|\partial_x|^{\f13}\theta^{\mathrm{e}}$, $f_2$ represents the vorticity $u^{\mathrm{e},2}$, and $\partial_y\theta^{\mathrm{i}}$ is replaced by $\nu^{\f23}te^{-c\nu^{\f13}t}\lesssim \nu^{\f13}$. We expect that $\nu^{\f13}f_2$ has the same size as $f_1$, which allows us to study 
\begin{align*}
    \partial_t f_1\approx \frac{|k|^{\f13}f_1 }{k^2+(\eta-kt)^2}, 
\end{align*}
in each critical time interval $t\in [\f{2\eta}{2k+1}, \f{2\eta}{2k-1}]$. This gives us a growth $e^{\pi\frac{|k|^{\f13}}{k^2}}$ for each critical time interval and leads us to a total growth
\begin{align*}
    \prod_{k=1}^{\sqrt{|\eta|}}e^{\pi\frac{|k|^{\f13}}{k^2}}\lesssim e^{3\pi}. 
\end{align*}
In terms of the modification of the Fourier multiplier, we modified the Fourier multiplier $\mathcal{M}_3$ by adding $\mu$ derivatives in the $x$ direction with $2/3\leq \mu<1$.

\section{Multipliers and key lemmas}
\subsection{Construction of the multipliers}  In this section, we introduce the Fourier multipliers and their main properties. We choose a real-valued function $\varphi\in C^{\infty}(\mathbb R)$ such that $0\leq\varphi\leq1$, $0\leq \varphi'\leq\f14$ on $\mathbb R$, and $\varphi'=\f14$ on $[-1,1]$. Let 
\begin{align*}
&\mathcal{M}_1(k, \xi)=\left\{\begin{aligned}
\varphi\big(\nu^{\f13}k^{-\f13}\mathrm{sgn}(k)\xi\big),\quad &k\neq 0\\
0,\quad &k=0,
\end{aligned}\right.\\
&\mathcal{M}_2( t,k, \xi)=\left\{\begin{aligned}
\f{\pi}{2}+\arctan\big(\f{\xi}{k}\big),\quad &0<|k|\leq\nu^{-\f12},\\
0,\quad &k=0,\ \mathrm{or}\  |k|>\nu^{-\f12},
\end{aligned}\right.\\
&\mathcal{M}_3(t, k, \xi)=
\left\{\begin{aligned}\sum_{l\in \mathbb{Z}\setminus\{0\}}\f{|l|^{\mu}}{|l|^2}\Big(\mathrm{sgn}(l)\arctan\Big(\f{\xi+t(k-l)}{1+|k-l|+|l|}\Big)+\f{\pi}{2}\Big),\quad &|k|\leq \nu^{-\f12},\\
0,\quad &|k|>\nu^{-\f12},
\end{aligned}\right.
\end{align*} 
and
\begin{align*}
\mathcal{M}=e^{\d_0\nu^{\f13}t}(\mathcal{M}_1+\mathcal{M}_2+\mathcal{M}_3+1).
\end{align*}
$\mathcal{M}_t=\mathcal{M}(t, D_x,D_y)$ is a self-adjoint Fourier multiplier. Here $\mathcal M_2$ is used to capture the inviscid damping, $\mathcal{M}_1$ together with the diffusion gives us the enhanced dissipation, and the operator $\mathcal{M}_3$ was first introduced in \cite{weizhang2023nonlinear} to capture the nonlinear growth of the reaction. We modify $\mathcal{M}_3$ by adding $\mu<1$ derivatives in the $x$ direction. 
And the following estimates hold
\begin{align*}
1\leq \mathcal{M}_1+\mathcal{M}_2+\mathcal{M}_3+1\leq  C_{\mu},\quad 1\leq \mathcal{M}\leq C_{\mu}e^{\d_0\nu^{\f13}t},\quad  e^{\d_0\nu^{\f13}t}  \mathcal{M}|_{k\neq0}.
\end{align*}
We also define the multiplier $\Upsilon_t=\Upsilon(t,D_x,D_y)$, with
\begin{align*}
    \Upsilon(t,k,\xi)=\sum_{l\in\mathbb Z\backslash\{0\}}\frac{|l|^{\mu}}{|l|}\frac{1+|k-l|+|l|}{(1+|k-l|+|l|)^2+(\xi+t(k-l))^2}> 0,
\end{align*}
where $2/3\leq\mu<1$, it holds that
\begin{equation}\label{Upsilon}
    (-\pa_t+k\pa_{\xi})\mathcal{M}_3(t,k,\xi)=\Upsilon(t,k,\xi)\quad |k|\leq \nu^{-\f12}.
\end{equation}

\subsection{Basic properties of the multipliers}
Taking inner product of $(\pa_t+y\pa_x-\nu\Delta)\omega$ with $\mathcal{M}_t\omega$, we obtain
\begin{align*}
    2\mathrm{Re}\langle\pa_t\omega,\mathcal{M}_t\omega\rangle+2\mathrm{Re}\langle y\pa_x\omega,\mathcal{M}_t\omega\rangle-2\nu\mathrm{Re}\langle\Delta\omega,\mathcal{M}_t\omega\rangle\overset{def}{=}R.
\end{align*}
Since $\mathcal{M}_t$ is self-adjoint and $y\pa_x$ is skew-adjoint, we have
\begin{align*}
    2\mathrm{Re}\langle y\pa_x\omega,\mathcal{M}_t\omega\rangle=\langle [\mathcal{M}_t,y\pa_x]\omega,\omega\rangle=\langle k\pa_{\xi}\mathcal{M}(t,D)\omega,\omega\rangle,\\
    2\mathrm{Re}\langle \pa_t\omega,\mathcal{M}_t\omega\rangle=\pa_t\langle \omega,\mathcal{M}_t\omega\rangle-\langle \omega,\pa_{t}\mathcal{M}(t,D)\omega\rangle.
\end{align*}
Then, by Plancherel's formula, we have
\begin{align*}
    \frac{d}{dt}\langle \omega,\mathcal{M}_t\omega\rangle+\sum_{k}\int_{\mathbb R}\big((k\pa_{\xi}-\pa_t)\mathcal{M}+2\nu(k^2+\xi^2)\mathcal{M}\big)(t,k,\xi)|\hat{\omega}(t,k,\xi)|^2d\xi=R.
\end{align*}
Next, we give a lower bound of $(k\pa_{\xi}-\pa_t)\mathcal{M}+2\nu(k^2+\xi^2)\mathcal{M}$ in the following lemma. 
\begin{lemma}[\cite{weizhang2023nonlinear}]
It holds that
    \begin{equation}\label{energy-dissipation}
\begin{aligned}
    &2\nu\left\|(D_x,D_y)\sqrt{\mathcal{M}_t}f\right\|_{L^2}^2+\langle (k\pa_{\xi}-\pa_t)\mathcal{M}(t,D)f,f\rangle\\
    \geq&\frac{\delta_0}{2}\Big(\nu\left\|(D_x,D_y)\sqrt{\mathcal{M}_t}f\right\|_{L^2}^2+\nu^{\f13}\left\||D_x|^{\frac{1}{3}}\sqrt{\mathcal{M}_t}f\right\|_{L^2}^2+\left\|D_x\sqrt{\mathcal{M}_t}(-\Delta)^{-\f12}f\right\|_{L^2}^2\\
    &+\left\|\sqrt{\mathcal{M}_t\Upsilon_t}f\right\|_{L^2}^2\Big).
\end{aligned}
\end{equation}
\end{lemma}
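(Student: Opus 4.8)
The plan is to reduce the operator inequality to a pointwise inequality among the Fourier symbols and then to track, frequency region by frequency region, which multiplier produces which of the four ``good'' quantities on the right-hand side.

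First, by Plancherel and the self-adjointness of $\mathcal{M}_t$ already used to derive the preceding energy identity, the claimed bound is equivalent to the scalar inequality
\[
(k\pa_\xi-\pa_t)\mathcal{M}(t,k,\xi)+2\nu(k^2+\xi^2)\mathcal{M}(t,k,\xi)\ \geq\ \f{\delta_0}{2}\Big(\nu(k^2+\xi^2)+\nu^{\f13}|k|^{\f23}+\f{k^2}{k^2+\xi^2}+\Upsilon(t,k,\xi)\Big)\mathcal{M}(t,k,\xi)
\]
holding for a.e.\ $(k,\xi)$. Writing $\mathcal{M}=e^{\delta_0\nu^{1/3}t}N$ with $N=\mathcal{M}_1+\mathcal{M}_2+\mathcal{M}_3+1\in[1,C_\mu]$, and using that $\mathcal{M}_1,\mathcal{M}_2$ are $t$-independent, I would compute
\[
(k\pa_\xi-\pa_t)\mathcal{M}=e^{\delta_0\nu^{1/3}t}\big(k\pa_\xi\mathcal{M}_1+k\pa_\xi\mathcal{M}_2+(k\pa_\xi-\pa_t)\mathcal{M}_3\big)-\delta_0\nu^{\f13}\mathcal{M}.
\]
By \eqref{Upsilon} the $\mathcal{M}_3$-contribution is exactly $e^{\delta_0\nu^{1/3}t}\Upsilon=(N^{-1}\Upsilon)\mathcal{M}\geq C_\mu^{-1}\Upsilon\mathcal{M}$, which already dominates $\tfrac{\delta_0}{2}\Upsilon\mathcal{M}$ once $\delta_0\leq C_\mu^{-1}$; the only genuinely negative piece is the exponential defect $-\delta_0\nu^{1/3}\mathcal{M}$, which must be reabsorbed at the end.

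Next I would match the remaining two multipliers to the inviscid-damping and enhanced-dissipation terms. For $0<|k|\leq\nu^{-1/2}$ a direct differentiation gives $k\pa_\xi\mathcal{M}_2=k^2/(k^2+\xi^2)$, i.e.\ the inviscid-damping symbol exactly, so after restoring the factor $e^{\delta_0\nu^{1/3}t}$ it controls $\tfrac{\delta_0}{2}\tfrac{k^2}{k^2+\xi^2}\mathcal{M}$. For $\mathcal{M}_1$ one has $k\pa_\xi\mathcal{M}_1=\varphi'\big(\nu^{1/3}|k|^{-1/3}\xi\big)\,\nu^{1/3}|k|^{2/3}\geq0$, which equals $\tfrac14\nu^{1/3}|k|^{2/3}$ on the resonant set $|\xi|\leq\nu^{-1/3}|k|^{1/3}$ where $\varphi'=\tfrac14$; this produces the enhanced-dissipation symbol $\nu^{1/3}|k|^{2/3}$ there. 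The plan is then to let the viscous term $2\nu(k^2+\xi^2)\mathcal{M}$ do the rest: off the resonant set one has $\nu\xi^2\geq\nu^{1/3}|k|^{2/3}$, and on the high-frequency range $|k|>\nu^{-1/2}$ (where $\mathcal{M}_1,\mathcal{M}_2$ are switched off) one has $\nu k^2\geq1\geq k^2/(k^2+\xi^2)$, so viscosity recovers both the enhanced-dissipation and the inviscid-damping symbols precisely where the tailored multipliers vanish.

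Finally I would reabsorb the defect and fix the constant. Splitting $2\nu(k^2+\xi^2)\mathcal{M}=\tfrac{\delta_0}{2}\nu(k^2+\xi^2)\mathcal{M}+(2-\tfrac{\delta_0}{2})\nu(k^2+\xi^2)\mathcal{M}$ reserves the viscous good term and leaves an $O(\nu(k^2+\xi^2))\mathcal{M}$ surplus that covers the non-resonant and high-frequency pieces above, so that a single smallness choice $\delta_0=\delta_0(C_\mu)$ makes all the $\tfrac{\delta_0}{2}$-fractions fit simultaneously. For the exponential defect, note that for nonzero modes $|k|\geq1$ forces $\nu^{1/3}|k|^{2/3}\geq\nu^{1/3}$, so the enhanced-dissipation budget exceeds $\delta_0\nu^{1/3}\mathcal{M}$ and absorbs it after a further shrinking of $\delta_0$; this is exactly the mechanism recorded in the bound $\mathcal{M}|_{k\neq0}$ noted above. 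I expect the main obstacle to be the bookkeeping of this region decomposition, and in particular verifying that the single viscous term $2\nu(k^2+\xi^2)\mathcal{M}$ is never ``spent twice'' while simultaneously yielding the viscous good term, the non-resonant enhanced dissipation, the high-frequency inviscid damping, and the share needed against the exponential defect, together with choosing one value of $\delta_0$ for which every one of these comparisons holds at once.
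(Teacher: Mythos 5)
Your proposal follows essentially the same route as the paper's proof: reduce to a pointwise inequality between symbols; obtain the enhanced-dissipation symbol $\frac14\nu^{\frac13}|k|^{\frac23}$ from $k\pa_{\xi}\mathcal{M}_1$ on the resonant set and from one copy of the viscosity off it; read the inviscid-damping symbol directly from $k\pa_{\xi}\mathcal{M}_2$; produce $\Upsilon$ from \eqref{Upsilon}; and absorb the defect $-\delta_0\nu^{\frac13}\mathcal{M}$ coming from the exponential prefactor into the enhanced-dissipation budget, using $|k|^{\frac23}\geq 1$ on nonzero modes and the smallness of $\delta_0$ relative to $C_{\mu}$. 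The final splitting of $2\nu(k^2+\xi^2)\mathcal{M}$ to avoid spending viscosity twice is also how the paper's constants are arranged.

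There is, however, one concrete gap, and it originates in a misreading of the frequency cutoffs: on $|k|>\nu^{-\frac12}$ it is $\mathcal{M}_2$ \emph{and} $\mathcal{M}_3$ that are switched off, not $\mathcal{M}_1$ and $\mathcal{M}_2$ (indeed $\mathcal{M}_1$ is active for all $k\neq 0$). Since $\mathcal{M}_3$ vanishes there, the identity \eqref{Upsilon} --- which you invoke with no restriction on $k$ in order to generate the good term $\|\sqrt{\mathcal{M}_t\Upsilon_t}f\|_{L^2}^2$ --- holds only for $|k|\leq\nu^{-\frac12}$; for $|k|>\nu^{-\frac12}$ one has $(k\pa_{\xi}-\pa_t)\mathcal{M}_3=0$ while $\Upsilon(t,k,\xi)>0$ is not cut off, so as written your budget never produces the $\Upsilon$-term on that region. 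The repair is one line and exactly parallel to your treatment of the inviscid-damping symbol at high frequency: as the paper checks, $\Upsilon(t,k,\xi)\leq \sum_{l\neq 0}|l|^{\mu-1}(1+|l|)^{-1}\lesssim_{\mu}1$ uniformly in $(t,k,\xi)$, and on $|k|>\nu^{-\frac12}$ one has $\nu k^2\geq 1$ and $\nu^{\frac13}|k|^{\frac23}\geq 1$, so either the surplus viscosity or (as the paper prefers) half of the $\frac14\nu^{\frac13}|k|^{\frac23}$ budget dominates $\frac{\delta_0}{2}\Upsilon\mathcal{M}$ after a further shrinking of $\delta_0$. With that addition your bookkeeping closes and the argument coincides with the paper's; note also that your absorption of the exponential defect, like the paper's, is only carried out for $k\neq 0$, which is consistent with the paper's convention that the factor $e^{\delta_0\nu^{\frac13}t}$ is effective on the nonzero modes.
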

\begin{proof}
According to the definition of $\mathcal{M}_1$, we have
\begin{equation}\label{kM1}
\begin{aligned}
    k\pa_{\xi}\mathcal{M}_1(t,k,\xi)=\nu^{\f13}|k|^{\f23}\varphi'\big(\nu^{\f13}|k|^{-\f13}\mathrm{sgn}(k)\xi\big),\\
    \nu(k^2+\xi^2)+k\pa_{\xi}\mathcal{M}_1(t,k,\xi)\geq\f14\nu^{\f13}|k|^{\f23},\quad \forall \xi\in\mathbb R,k\in\mathbb Z.
\end{aligned}
\end{equation}
We also have
\begin{equation}\label{kM2}
    k\pa_{\xi}\mathcal{M}_2(t,k,\xi)=\frac{k^2}{k^2+\xi^2},\quad |k|\leq\nu^{-\f12}.
\end{equation}
Then combined with \eqref{Upsilon} and $\pa_t\mathcal{M}_1=\pa_t\mathcal{M}_2=0$, we infer that for $|k|\leq\nu^{-\f12}$,
\begin{align*}
    &2\nu(k^2+\xi^2)\big(\mathcal{M}_1+\mathcal{M}_2+\mathcal{M}_3+1\big)+(k\pa_{\xi}-\pa_t)\big(\mathcal{M}_1+\mathcal{M}_2+\mathcal{M}_3+1\big)\\
    \geq &\nu(k^2+\xi^2)+\frac{k^2}{k^2+\xi^2}+\f14\nu^{\f13}|k|^{\f23}+\Upsilon(t,k,\xi).
\end{align*}
If $|k|>\nu^{-\f12}$, then $\mathcal{M}_2=\mathcal{M}_3=0$, and 
\begin{align*}
    2\nu(k^2+\xi^2)\big(\mathcal{M}_1+\mathcal{M}_2+\mathcal{M}_3+1\big)+(k\pa_{\xi}-\pa_t)\big(\mathcal{M}_1+\mathcal{M}_2+\mathcal{M}_3+1\big)\geq \nu(k^2+\xi^2)+\f14\nu^{\f13}|k|^{\f23}.
\end{align*}
Notice that 
\begin{align*}
    0\leq\Upsilon(t,k,\xi)\leq\sum_{l\in\mathbb Z\backslash\{0\}}\frac{|l|^{\mu}}{|l|}\frac{1}{(1+|k-l|+|l|)}\leq \sum_{l\in\mathbb Z\backslash\{0\}}\frac{|l|^{\mu}}{|l|}\frac{1}{(1+|l|)}\lesssim_{\mu}1.
\end{align*}
Then for $|k|>\nu^{-\f12}$, we have $k^2/(k^2+\xi^2)+\Upsilon(t,k,\xi)\leq C_{1,\mu}\nu^{\f13}|k|^{\f23}$ and 
\begin{align*}
    &2\nu(k^2+\xi^2)\big(\mathcal{M}_1+\mathcal{M}_2+\mathcal{M}_3+1\big)+(k\pa_{\xi}-\pa_t)\big(\mathcal{M}_1+\mathcal{M}_2+\mathcal{M}_3+1\big)\\
    \geq&\nu(k^2+\xi^2)+\frac{1}{8}\nu^{\f13}|k|^{\f23}+\frac{k^2}{8C_{1,\mu}(k^2+\xi^2)}+\frac{\Upsilon(t,k,\xi)}{8C_{1,\mu}},
\end{align*}
compared with $|k|\leq\nu^{-\f12}$ case, this result still holds for $|k|\leq\nu^{-\f12}$. Then we have
\begin{align*}
    2\nu(k^2+\xi^2)\mathcal{M}+(k\pa_{\xi}-\pa_t)\mathcal{M}\geq& e^{\delta_0\nu^{\f13}t}\big(\nu(\xi^2+k^2)+\f18\nu^{\f13}|k|^{\f23}\\
    &+\frac{k^2}{8C_{1,\mu}(k^2+\xi^2)}+\frac{\Upsilon(t,k,\xi)}{8C_{1,\mu}}-\delta_0\nu^{\f13}|k|^{\f23}(\mathcal{M}_1+\mathcal{M}_2+\mathcal{M}_3+1)\big).
\end{align*}
Since $\mathcal{M}_1+\mathcal{M}_2+\mathcal{M}_3+1\leq C_{\mu}$, by taking $\delta_0=\min\{\f{1}{12C_{\mu}},\f{1}{4C_{1,\mu}}\}$, we have $\f18-\delta_0(\mathcal{M}_1+\mathcal{M}_2+\mathcal{M}_3+1)\geq\f{1}{24}$, and
\begin{align*}
    2\nu(k^2+\xi^2)\mathcal{M}+(k\pa_{\xi}-\pa_t)\mathcal{M}\geq&\f{e^{\delta_0\nu^{\f13}t}}{8C_{1,\mu}}\Big(\nu(\xi^2+k^2)+\nu^{\f13}|k|^{\f23}
    +\frac{k^2}{k^2+\xi^2}+\Upsilon(t,k,\xi)\Big)\\
    \geq& \f{\delta_0\mathcal{M}}{2}\Big(\nu(\xi^2+k^2)+\nu^{\f13}|k|^{\f23}
    +\frac{k^2}{k^2+\xi^2}+\Upsilon(t,k,\xi)\Big).
\end{align*}
Then the result follows directly.
\end{proof}
Next, we show the estimates of the derivative of $\mathcal{M}$, which are used in commutator estimates. 
\begin{lemma}[\cite{weizhang2023nonlinear}]\label{de-M}
    It holds for $k\neq0$ that
    \begin{align*}
        |\pa_{\xi}\mathcal{M}|\leq C\mathcal{M}(\nu^{\f13}|k|^{-\f13}+|k|^{-1}),
    \end{align*}
    and for $k>\nu^{-\f12}$, it holds that
    \begin{align*}
         |\pa_k\mathcal{M}|\lesssim \mathcal{M}\nu^{\f13}|k|^{-\f43}|\xi|\lesssim \mathcal{M}\nu^{\f12}|k|^{-1}|\xi|.
    \end{align*}
\end{lemma}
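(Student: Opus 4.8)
The plan is to reduce everything to estimating the derivatives of the bracketed factor and then treat each $\mathcal{M}_j$ separately. Writing $N:=\mathcal{M}_1+\mathcal{M}_2+\mathcal{M}_3+1$, we have $\mathcal{M}=e^{\delta_0\nu^{\f13}t}N$, and the scalar $e^{\delta_0\nu^{\f13}t}$ is independent of $(k,\xi)$, so both $\pa_\xi$ and $\pa_k$ act only on $N$. Since each $\mathcal{M}_j\geq0$ we have $N\geq1$, and therefore
\[
\f{|\pa_\xi\mathcal{M}|}{\mathcal{M}}=\f{|\pa_\xi N|}{N}\leq|\pa_\xi N|,\qquad \f{|\pa_k\mathcal{M}|}{\mathcal{M}}=\f{|\pa_k N|}{N}\leq|\pa_k N|.
\]
Thus it suffices to bound $|\pa_\xi N|$ by $C(\nu^{\f13}|k|^{-\f13}+|k|^{-1})$ for all $k\neq0$, and $|\pa_k N|$ by $C\nu^{\f13}|k|^{-\f43}|\xi|$ for $k>\nu^{-\f12}$; the factor $\mathcal{M}$ is then restored at the very end.

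For the $\pa_\xi$ estimate the contributions of $\mathcal{M}_1$ and $\mathcal{M}_2$ are immediate. Differentiating $\mathcal{M}_1=\varphi(\nu^{\f13}|k|^{-\f13}\mathrm{sgn}(k)\xi)$ and using $0\leq\varphi'\leq\f14$ gives $|\pa_\xi\mathcal{M}_1|\leq\f14\nu^{\f13}|k|^{-\f13}$, which is the first term; from $\pa_\xi\mathcal{M}_2=k/(k^2+\xi^2)$ one reads off $|\pa_\xi\mathcal{M}_2|\leq|k|^{-1}$, the second term. For $|k|>\nu^{-\f12}$ both $\mathcal{M}_2$ and $\mathcal{M}_3$ vanish, so only $\mathcal{M}_1$ survives and there is nothing further to check there.

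The only real work, and the step I expect to be the main obstacle, is the $\mathcal{M}_3$ contribution, because it is an infinite sum and a crude bound yields only $O(1)$, whereas genuine decay in $|k|$ is required. Differentiating term by term and using $\f{A}{A^2+B^2}\leq A^{-1}$ with $A=1+|k-l|+|l|$ reduces matters to $\sum_{l\neq0}|l|^{\mu-2}/(1+|k-l|+|l|)$. To extract the factor $|k|^{-1}$ I would split the sum at $|l|=|k|/2$: for $|l|\geq|k|/2$ I bound the denominator below by $|l|\gtrsim|k|$ and use $\sum_{|l|\geq|k|/2}|l|^{\mu-3}\lesssim|k|^{\mu-2}\leq|k|^{-1}$, where the condition $\mu<1$ is exactly what makes the tail summable and no worse than $|k|^{-1}$; for $|l|<|k|/2$ I use $|k-l|\geq|k|/2$ to pull out $|k|^{-1}$ and bound the remaining $\sum_{l\neq0}|l|^{\mu-2}\lesssim_\mu1$. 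Both pieces are $\lesssim_\mu|k|^{-1}$, which closes the $\pa_\xi$ bound.

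For the $\pa_k$ bound one needs only the regime $k>\nu^{-\f12}$, where $\mathcal{M}_2=\mathcal{M}_3=0$ and hence $N=\mathcal{M}_1+1$. A direct chain-rule computation on $\varphi(\nu^{\f13}k^{-\f13}\xi)$, again with $\varphi'\leq\f14$, gives $|\pa_k\mathcal{M}_1|\leq\f{1}{12}\nu^{\f13}|k|^{-\f43}|\xi|$, which is the first asserted inequality. The second inequality is purely algebraic: $k>\nu^{-\f12}$ forces $\nu^{\f13}|k|^{-\f13}\leq\nu^{\f12}$, hence $\nu^{\f13}|k|^{-\f43}\leq\nu^{\f12}|k|^{-1}$, and multiplying through by $|\xi|$ and by $\mathcal{M}$ via the reduction above completes the proof.
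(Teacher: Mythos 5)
Your proof is correct and follows essentially the same route as the paper: differentiate each multiplier separately, bound $|\pa_{\xi}\mathcal{M}_1|$ and $|\pa_{\xi}\mathcal{M}_2|$ directly, control the $\mathcal{M}_3$ sum termwise, and use $\mathcal{M}_1+\mathcal{M}_2+\mathcal{M}_3+1\geq1$ to restore the factor $\mathcal{M}$, finishing with the same algebraic step $\nu^{\f13}|k|^{-\f43}\leq\nu^{\f12}|k|^{-1}$ for $k>\nu^{-\f12}$. The only cosmetic difference is the $\mathcal{M}_3$ sum: where you split at $|l|=|k|/2$, the paper simply uses the triangle inequality $1+|k-l|+|l|\geq1+|k|$ to pull out $(1+|k|)^{-1}$ uniformly in $l$, which reduces that step to one line.
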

\begin{proof}
By \eqref{kM1} and \eqref{kM2}, we have
\begin{align*}
    |\pa_{\xi}\mathcal{M}_1|\leq \nu^{\f13}|k|^{-\f13},\quad |\pa_{\xi}\mathcal{M}_2|\leq |k|^{-1}, \quad k\neq0.
\end{align*}
By the definition of $\mathcal{M}_3$, we have
\begin{align*}
    |\pa_{\xi}\mathcal{M}_3|\leq \sum_{l\in\mathbb Z\backslash\{0\}}\frac{|l|^{\mu}}{|l|^2}\frac{1}{1+|k-l|+|l|}\leq \sum_{l\in\mathbb Z\backslash\{0\}}\frac{|l|^{\mu}}{|l|^2}\frac{1}{1+|k|}\leq\frac{C}{1+|k|}.
\end{align*}
Thus, for $k\neq0$, we have
\begin{align*}
    |\pa_{\xi}(\mathcal{M}_1+\mathcal{M}_2+\mathcal{M}_3+1)|\leq C(\nu^{\f13}|k|^{-\f13}+|k|^{-1}),\quad \pa_{\xi}\mathcal{M}\leq C\mathcal{M}(\nu^{\f13}|k|^{-\f13}+|k|^{-1}).
\end{align*}
For $|k|>\nu^{-\f12}$, we have $\mathcal{M}=e^{\delta_0\nu^{\f13}t}(\mathcal{M}_1+1)$, and 
\begin{align*}
    |\pa_k\mathcal{M}_1(t,k,\xi)|=\f13\nu^{\f13}|k|^{-\f43}|\xi|\varphi'(\nu^{\f13}|k|^{-\f13}\mathrm{sgn}(k)\xi)\leq \nu^{\f13}|k|^{-\f43}|\xi|,
\end{align*}
which gives
\begin{align*}
    |\pa_k\mathcal{M}|\leq \mathcal{M}\nu^{\f13}|k|^{-\f43}|\xi|\leq \mathcal{M}\nu^{\f12}|k|^{-1}|\xi|,\quad |k|>\nu^{-\f12}.
\end{align*}
\end{proof}
We also lay out the following estimates, which will be used in later calculations.

For $t\geq 0$ and $b\in\mathbb R$, we know $\Lambda_t^b(k,\xi)\leq\sqrt{\mathcal{M}(t,k,\xi)}\Lambda_t^b(k,\xi)$, which implies
\begin{equation}\label{eq:M}
    \|\Lambda_t^bf\|_{L^2}\leq\|\sqrt{\mathcal{M}_t}\Lambda_t^bf\|_{L^2}.
\end{equation}
For $k\neq0$ and $t\geq0$, we have $e^{\delta_0/2\nu^{\f13}t}\Lambda_t^b(k,\xi)\leq \sqrt{\mathcal{M}(t,k,\xi)}\Lambda_t^b(k,\xi)$, which gives
\begin{equation}\label{eq:Mneq}
    \|e^{\f{\delta_0}{2}\nu^{\f13}t}\Lambda_t^bf_{\neq}\|_{L^2}\leq \|\sqrt{\mathcal{M}_t}\Lambda_t^bf_{\neq}\|_{L^2}.
\end{equation}
For $k\neq0$ and $t>0$, we have
\begin{align*}
    |\xi|\leq|\xi+tk|+t|k|\leq(1+t)\Lambda_t(k,\xi)\leq C\nu^{-\f13}e^{\f{\delta_0}{2}\nu^{\f13}t}\Lambda_t(k,\xi)\leq C\nu^{-\f13}\sqrt{\mathcal{M}(t,k,\xi)}\Lambda_t(k,\xi).
\end{align*}
For $k=0$ and $t>0$, we have
\begin{align*}
    |\xi|=|\xi+tk|\leq\Lambda_t(k,\xi)\leq \sqrt{\mathcal{M}(t,k,\xi)}\Lambda_t(k,\xi).
\end{align*}
Then for any $d\geq 0$, we have
\begin{equation}\label{eq:Mf}
    \|D_y\Lambda_t^{d}f\|_{L^2}\leq C\nu^{-\f13}\|\sqrt{\mathcal{M}_t}\Lambda_t^{d+1}f\|_{L^2}.
\end{equation}
\begin{lemma}[\cite{weizhang2023nonlinear}]\label{Lem:L1}
    If $P_0f=0$, then for $t\geq0$, we have
    \begin{align*}
        \|\hat{f}\|_{L^1}\leq C(1+t)^{-1}\|\nabla\Lambda_tf\|_{L^2}.
    \end{align*}
\end{lemma}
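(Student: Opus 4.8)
The plan is to bound the $L^1$ frequency norm by the Cauchy--Schwarz inequality against a weight tailored to the right-hand side, and then to reduce everything to an explicit one-dimensional integral estimate that produces the gain $(1+t)^{-1}$. Since $P_0f=0$, the Fourier transform is supported on $k\neq0$, so $\|\hat f\|_{L^1}=\sum_{k\neq0}\int_{\mathbb R}|\hat f(k,\xi)|\,d\xi$; this restriction is essential, because the weight below fails to be integrable at $\xi=0$ when $k=0$, which is exactly where the zero mode would contribute. I would choose the weight $w(k,\xi)^2=(k^2+\xi^2)\Lambda_t^2(k,\xi)$, which is precisely the symbol in
\begin{align*}
\|\nabla\Lambda_tf\|_{L^2}^2=\sum_{k\neq0}\int_{\mathbb R}(k^2+\xi^2)\Lambda_t^2(k,\xi)|\hat f(k,\xi)|^2\,d\xi,
\end{align*}
since $\nabla$ has symbol $(ik,i\xi)$ and $\Lambda_t$ has symbol $\Lambda_t(k,\xi)$. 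Writing $|\hat f|=(w|\hat f|)\cdot w^{-1}$ and applying Cauchy--Schwarz simultaneously in the sum over $k$ and the integral over $\xi$ gives
\begin{align*}
\|\hat f\|_{L^1}\leq\|\nabla\Lambda_tf\|_{L^2}\left(\sum_{k\neq0}\int_{\mathbb R}\frac{d\xi}{(k^2+\xi^2)\big(1+k^2+(\xi+tk)^2\big)}\right)^{1/2},
\end{align*}
so it remains to prove that the last factor is bounded by $C(1+t)^{-1}$.

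The key step is the evaluation of the inner integral $I_k:=\int_{\mathbb R}(k^2+\xi^2)^{-1}\big(1+k^2+(\xi+tk)^2\big)^{-1}\,d\xi$. After the shift $\eta=\xi+tk$, this is the integral of a product of two Lorentzian factors, one centered at $\eta=tk$ of width $|k|$ and one centered at $\eta=0$ of width $\sqrt{1+k^2}$, with centers separated by $t|k|$. Using the convolution identity for Cauchy kernels (equivalently, a residue computation), one finds
\begin{align*}
I_k=\frac{\pi\big(|k|+\sqrt{1+k^2}\big)}{|k|\sqrt{1+k^2}\left[\big(|k|+\sqrt{1+k^2}\big)^2+t^2k^2\right]}.
\end{align*}
Since $|k|+\sqrt{1+k^2}\asymp|k|$ and $\big(|k|+\sqrt{1+k^2}\big)^2+t^2k^2\asymp k^2(1+t)^2$ for $k\neq0$, this yields $I_k\lesssim|k|^{-3}(1+t)^{-2}$. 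Summing and using $\sum_{k\neq0}|k|^{-3}<\infty$ gives $\sum_{k\neq0}I_k\leq C(1+t)^{-2}$, whence the square-root factor is $\leq C(1+t)^{-1}$ and the lemma follows.

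I expect the only genuine difficulty to be the sharp control of $I_k$, where one must correctly convert the separation $t|k|$ between the two peaks into the quadratic gain $(1+t)^{-2}$ rather than a mere $O(1)$ bound; getting the scaling of the denominator right in both the regime $t\lesssim1$ and $t\gg1$ is the delicate point. A reader who prefers to avoid the exact formula can instead split the $\xi$-integral into the regions $|\xi|\leq t|k|/2$ and $|\xi|>t|k|/2$: on the first region the factor $1+k^2+(\xi+tk)^2$ is bounded below by $\sim\max(k^2,t^2k^2)$, while on the second $k^2+\xi^2$ is bounded below by $\sim\max(k^2,t^2k^2)$, and pairing each lower bound with the integrable Lorentzian $\int(k^2+\xi^2)^{-1}\,d\xi=\pi/|k|$ reproduces the same $|k|^{-3}(1+t)^{-2}$ decay up to constants.
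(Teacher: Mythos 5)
Your proof is correct and follows essentially the same route as the paper's: Cauchy--Schwarz against the weight $(k^2+\xi^2)\Lambda_t^2(k,\xi)$, then the two-Lorentzian convolution identity (the paper's \eqref{eq:as}) for the inner $\xi$-integral, and finally the summable factor $|k|^{-3}$ to get the $(1+t)^{-2}$ bound. The only, inessential, difference is that the paper first drops the $1$ in $\Lambda_t^2$ and applies the identity with $a=s=|k|$, $z=-tk$, whereas you keep it and take $a=\sqrt{1+k^2}$, $s=|k|$; both yield the same conclusion.
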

\begin{proof}
    As $P_0f=0$, by H\"older's inequality, we have
    \begin{align*}
        &\|\hat{f}\|_{L^1}=\sum_{k\in\mathbb Z\backslash\{0\}}\int_{\mathbb R}|\hat{f}(k,\xi)|d\xi\\
        \leq& \Big(\sum_{k\in\mathbb Z\backslash\{0\}}\int_{\mathbb R}(k^2+\xi^2)\Lambda_t^2(k,\xi)|\hat{f}(k,\xi)|^2d\xi\Big)^{\f12}\Big(\sum_{k\in\mathbb Z\backslash\{0\}}\int_{\mathbb R}\f{d\xi}{(k^2+\xi^2)\Lambda_t^2(k,\xi)}\Big)^{\f12}=:K_1^{\f12}K_2^{\f12}.
    \end{align*}
    By Plancherel's formula, we know $K_1=\|\nabla\Lambda_tf\|_{L^2}^2$. Next for $K_2$, we use the fact that for $a>0$, $s>0$, and $z\in\mathbb R$:
\begin{equation}\label{eq:as}
    \int_{\mathbb R}\f{d\eta}{(a^2+\eta^2)(s^2+(z-\eta)^2)}=\frac{\pi}{as}\f{a+s}{(a+s)^2+z^2}.
\end{equation}
Then by taking $a=s=|k|, z=-tk$ in \eqref{eq:as}, we have
    \begin{align*}
        K_2=\sum_{k\in\mathbb Z\backslash\{0\}}\int_{\mathbb R}\f{d\xi}{(k^2+\xi^2)(1+k^2+(\xi+tk)^2)}\leq \sum_{k\in\mathbb Z\backslash\{0\}}\frac{\pi}{k^2}\f{2|k|}{(2k)^2+(tk)^2}\leq \f{C}{(1+t)^2},
    \end{align*}
    which impiles $\|\hat{f}\|_{L^1}\leq K_1^{\f12}K_2^{\f12}\leq C(1+t)^{-1}\|\nabla\Lambda_tf\|_{L^2}$.
\end{proof}
The following lemma gives the inviscid damping estimate. 
\begin{lemma}[\cite{weizhang2023nonlinear}]\label{Lem: ID}
    Let $b\geq2$, $t\geq0$ and $b_1>0$. Let $(u^1, u^2)=\nabla^{\bot}(-\Delta)^{-1}\omega$. Then
    \begin{align*}
        \|\Lambda_t^{b_1}u_{\neq}^1\|_{L^2}+(1+t)\|\Lambda_t^{b_1}u^2\|_{L^2}\leq C(1+t)^{-1}\|\Lambda_t^{b+b_1}\omega_{\neq}\|_{L^2}.
    \end{align*}
\end{lemma}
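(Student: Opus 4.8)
The plan is to reduce the operator inequality to two pointwise bounds for the Fourier symbols and then to prove those bounds by separating the resonant and non-resonant frequency regions. First I would record the Biot--Savart law on the Fourier side: since $(u^1,u^2)=\nabla^{\perp}(-\Delta)^{-1}\omega=(-\pa_y,\pa_x)(-\Delta)^{-1}\omega$, the nonzero modes satisfy $\widehat{u^1_{\neq}}(k,\xi)=\frac{-i\xi}{k^2+\xi^2}\hat\omega_{\neq}(k,\xi)$ and $\widehat{u^2}(k,\xi)=\frac{ik}{k^2+\xi^2}\hat\omega_{\neq}(k,\xi)$ for $k\neq 0$ (note that $u^2$ carries only nonzero modes, so no projection is needed). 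By Plancherel's theorem it therefore suffices to establish, for every $k\neq 0$ and $\xi\in\mathbb R$, the multiplier bounds
\begin{align*}
\frac{|\xi|}{k^2+\xi^2}\,\Lambda_t^{b_1}(k,\xi)&\leq C(1+t)^{-1}\Lambda_t^{b+b_1}(k,\xi),\\
(1+t)\frac{|k|}{k^2+\xi^2}\,\Lambda_t^{b_1}(k,\xi)&\leq C(1+t)^{-1}\Lambda_t^{b+b_1}(k,\xi).
\end{align*}

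Next I would cancel the common factor $\Lambda_t^{b_1}$ and exploit $\Lambda_t\geq 1$ together with the hypothesis $b\geq 2$, which allows $\Lambda_t^b$ to be bounded below by $\Lambda_t^2(k,\xi)=1+k^2+(\xi+tk)^2$. The two estimates then reduce to the scalar inequalities
\begin{align*}
(1+t)\frac{|\xi|}{k^2+\xi^2}&\leq C\big(1+k^2+(\xi+tk)^2\big),\\
(1+t)^2\frac{|k|}{k^2+\xi^2}&\leq C\big(1+k^2+(\xi+tk)^2\big),
\end{align*}
to be verified for all integer $k\neq 0$ (so $|k|\geq 1$) and all $\xi\in\mathbb R$.

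To prove these I would split into two regimes according to the location of $\xi$ relative to the critical frequency $-tk$. When $|\xi+tk|\geq \tfrac12 t|k|$, the term $(\xi+tk)^2\geq \tfrac14 t^2k^2$ dominates $\Lambda_t^2$, and combining this with the elementary bounds $\frac{|\xi|}{k^2+\xi^2}\leq \frac{1}{2|k|}$ and $\frac{|k|}{k^2+\xi^2}\leq\frac{1}{|k|}$ controls the left-hand sides. When $|\xi+tk|<\tfrac12 t|k|$, the reverse triangle inequality forces $|\xi|\geq \tfrac12 t|k|$, so that $k^2+\xi^2\geq \tfrac14 t^2k^2$ and the large denominator does the work; the extra factor $1+k^2\geq 2$ in $\Lambda_t^2$ absorbs the remaining powers of $|k|$. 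A separate trivial treatment of the bounded range $t\leq 1$ (where $(1+t)^{\pm}\sim 1$ while $\frac{|k|}{k^2+\xi^2}\leq|k|^{-1}\leq 1$ and $\Lambda_t^2\geq 1+k^2\geq 2$) closes the argument with a uniform constant $C$.

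The Biot--Savart bookkeeping and the elementary case analysis are routine; the one genuine point is the competition between the two frequency regions. The multiplier $\frac{|k|}{k^2+\xi^2}$ is largest, of order $|k|^{-1}$, near $\xi=0$, which is exactly where $(\xi+tk)^2\approx t^2k^2$ makes $\Lambda_t$ large, whereas $\Lambda_t$ is smallest near $\xi=-tk$, which is exactly where the Biot--Savart denominator $k^2+\xi^2\approx(1+t^2)k^2$ is large. Confirming that these two effects never reinforce one another, so that the product is uniformly dominated by $(1+t)^{-2}\Lambda_t^{b}$, is the mechanism producing the $(1+t)^{-1}$ and $(1+t)^{-2}$ inviscid damping rates and is the main obstacle of the proof.
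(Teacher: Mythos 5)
Your proof is correct and follows essentially the same route as the paper: Plancherel reduces the claim to pointwise symbol bounds, and the heart of the matter is that $k^2+\xi^2$ and $k^2+(\xi+tk)^2$ cannot both be small. The paper packages this as the single product inequality $(k^2+\xi^2)\Lambda_t^2(k,\xi)\geq C^{-1}k^4(1+t)^2$, whereas you verify the two scalar inequalities by an explicit split into the regions $|\xi+tk|\geq \tfrac12 t|k|$ and $|\xi+tk|<\tfrac12 t|k|$; this is the same dichotomy, just written out case by case.
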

\begin{proof}
    For $k\in\mathbb Z\backslash\{0\}$ and $\xi\in\mathbb R$, we have
    \begin{align*}
        (k^2+\xi^2)\Lambda_t^2(t,\xi)\geq&(k^2+\xi^2)(k^2+(\xi+kt)^2)=k^4\big(1+(\frac{\xi}{k})^2\big)\big(1+(\frac{\xi}{k}+t)^2\big)\\
        \geq&C^{-1}k^4(1+t)^2\geq C^{-1}|k|(1+t)^2\geq C^{-1}(1+t)^2.
    \end{align*}
    Since $u_{\neq}^1=-\pa_y(-\Delta)^{-1}\omega_{\neq}$, by Plancherel's formula, we have
    \begin{align*}
        \|\Lambda_t^{b_1}u_{\neq}^1\|_{L^2}=&\|\Lambda_t^{b_1}(t,\xi)\xi(k^2+\xi^2)^{-1}\hat{\omega}_{\neq}\|_{L^2}
        \leq\|\Lambda_t^{b_1}(t,\xi)(k^2+\xi^2)^{-\frac{1}{2}}\hat{\omega}_{\neq}\|_{L^2}\\
        \leq&C(1+t)^{-1}\|\Lambda_t^{b_1}(t,\xi)\Lambda_t(t,\xi)\hat{\omega}_{\neq}\|_{L^2}\\
        \leq&C(1+t)^{-1}\|\Lambda_t^{b_1+b}(t,\xi)\hat{\omega}_{\neq}\|_{L^2}.
    \end{align*}
    Similarly, from $u^2=u_{\neq}^2=\pa_x(-\Delta)^{-1}\omega_{\neq}$ and Plancherel's formula, we have
    \begin{align*}
        \|\Lambda_t^{b_1}u^2\|_{L^2}=&\|\Lambda_t^{b_1}(t,\xi)k(k^2+\xi^2)^{-1}\hat{\omega}_{\neq}\|_{L^2}\leq C(1+t)^{-2}\|\Lambda_t^{b_1+b}(t,\xi)\hat{\omega}_{\neq}\|_{L^2}.
    \end{align*}
\end{proof}
\subsection{Nonlinear interaction}
In this section, we study the multipliers acting on the nonlinear terms. The first lemma is a commutator estimate which will be frequently used in the short-time region.
\begin{lemma}\label{Lem: transport-shorttime}
Let $v=(v^1, v^2)=\nabla^{\bot}(-\Delta)^{-1}w$. 
    Then, for $d\geq b>2$ it holds that 
    \begin{align*}
        \left|\langle \Lambda_t^d(v\cdot \nabla f), \Lambda_t^df\rangle\right|
        \lesssim (1+t)\|\Lambda_t^bw\|_{L^2}\|\Lambda_t^df\|_{L^2}^2
        +(1+t)\|\Lambda_t^dw\|_{L^2}\|\Lambda_t^df\|_{L^2}\|\Lambda_t^{b}f\|_{L^2},
    \end{align*} 
    in particular, if $v=(v_0^1,0)$, it holds that
    \begin{align*}
        \left|\langle \Lambda_t^d(v\cdot \nabla f), \Lambda_t^df\rangle\right|
        \lesssim \|\Lambda_t^bw\|_{L^2}\|\Lambda_t^df\|_{L^2}^2
        +\|\Lambda_t^dw\|_{L^2}\|\Lambda_t^df\|_{L^2}\|\Lambda_t^{b}f\|_{L^2}.
    \end{align*}
\end{lemma}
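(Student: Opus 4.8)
The plan is to use the divergence-free structure of $v$ to convert the left-hand side into a commutator, and then to estimate that commutator through a paraproduct decomposition performed in the sheared Fourier frame in which $\Lambda_t$ reduces to an ordinary weight. Throughout I write $\eta=(k,\xi)$ for the output frequency, and in any product I split $\eta=\eta_v+\eta_f$ into the frequency $\eta_v$ of $v$ (equivalently of $w$) and the frequency $\eta_f$ of $f$.

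\emph{Step 1 (reduction to a commutator).} Since $\dive v=0$, for the real-valued $f$ we have $\langle v\cdot\nabla\Lambda_t^d f,\Lambda_t^d f\rangle=\tfrac12\int v\cdot\nabla|\Lambda_t^d f|^2=-\tfrac12\int(\dive v)|\Lambda_t^d f|^2=0$, hence
\[
\langle\Lambda_t^d(v\cdot\nabla f),\Lambda_t^d f\rangle=\langle[\Lambda_t^d,\,v\cdot\nabla]f,\Lambda_t^d f\rangle .
\]
By Cauchy--Schwarz it therefore suffices to estimate $\|[\Lambda_t^d,v\cdot\nabla]f\|_{L^2}$, with the two claimed terms arising from two frequency regimes below.

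\emph{Step 2 (sheared frame and symbol inequalities).} Setting $\zeta=\xi+tk$, one has $\Lambda_t(k,\xi)=\langle(k,\zeta)\rangle$, and because $\zeta$ is additive under the frequency addition $\eta=\eta_v+\eta_f$, the weight $\Lambda_t$ obeys the same elementary inequalities as the standard bracket: the subadditivity $\Lambda_t^d(\eta)\lesssim\Lambda_t^d(\eta_v)+\Lambda_t^d(\eta_f)$ and the mean-value bound
\[
\big|\Lambda_t^d(\eta)-\Lambda_t^d(\eta_f)\big|\lesssim\Lambda_t(\eta_v)\big(\Lambda_t^{d-1}(\eta)+\Lambda_t^{d-1}(\eta_f)\big).
\]
I also record the two one-sided symbol bounds $|k|\le\Lambda_t(k,\xi)$ and $|\xi|\le(1+t)\Lambda_t(k,\xi)$ for $k\neq0$, together with the Biot--Savart bound $|\hat v(\eta_v)|\lesssim|\eta_v|^{-1}|\hat w(\eta_v)|$ coming from $v=\nabla^\perp(-\Delta)^{-1}w$, and the embedding $\|g\|_{L^\infty}\lesssim\|\hat g\|_{L^1}\lesssim\|\Lambda_t^s g\|_{L^2}$ valid for $s>1$.

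\emph{Step 3 (paraproduct).} In the Fourier representation of the commutator I split into the region $\{|\eta_v|\le|\eta_f|\}$ ($v$ at low frequency) and $\{|\eta_v|>|\eta_f|\}$ ($v$ at high frequency). In the first region the mean-value bound lets one derivative land on $v$ and $\Lambda_t^{d-1}$ on $f$; the surviving $\nabla f$ contributes $|\eta_f|\lesssim(1+t)\Lambda_t(\eta_f)$ by Step 2, recombining with $\Lambda_t^{d-1}(\eta_f)$ into $(1+t)\Lambda_t^d(\eta_f)$, while the low-frequency velocity factor $\Lambda_t(\eta_v)\hat v(\eta_v)$ is placed in $L^\infty$ via $\|\cdot\|_{L^1}\lesssim\|\Lambda_t^{b}w\|_{L^2}$ (using $b>2$, the Biot--Savart bound, and the $s>1$ embedding). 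A Young/Cauchy--Schwarz estimate then yields the first claimed term $(1+t)\|\Lambda_t^b w\|_{L^2}\|\Lambda_t^d f\|_{L^2}^2$. In the second region one instead writes $\Lambda_t^d(v\cdot\nabla f)\lesssim\Lambda_t^d v\cdot\nabla f$ with $f$ at low frequency; here $\|\Lambda_t^d v\|_{L^2}\lesssim\|\Lambda_t^d w\|_{L^2}$ by Biot--Savart, and $\|\nabla f\|_{L^\infty}\lesssim(1+t)\|\Lambda_t^b f\|_{L^2}$ precisely because the $\partial_y$ component forces $|\xi|\le(1+t)\Lambda_t$ and the $L^\infty$ embedding costs a further $s>1$, explaining the threshold $b>2$; pairing with $\Lambda_t^d f$ gives the second claimed term $(1+t)\|\Lambda_t^d w\|_{L^2}\|\Lambda_t^d f\|_{L^2}\|\Lambda_t^b f\|_{L^2}$.

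\emph{Step 4 (special case and main obstacle).} For $v=(v_0^1,0)$ the transport term is simply $v_0^1\partial_x f$: the only derivative is $\partial_x$, whose symbol satisfies $|k|\le\Lambda_t$ \emph{without} the factor $(1+t)$, and the $x$-convolution is trivial because $v_0^1$ has vanishing $x$-frequency; repeating Step 3 with $|k|$ in place of $|\eta_f|$ then removes every $(1+t)$, giving the improved bound. I expect the main obstacle to be the bookkeeping in Step 3: one must track that it is exactly the $\partial_y$ component of $\nabla f$ that forces the conversion $|\xi|\le(1+t)\Lambda_t$ and hence the prefactor $(1+t)$, while at the same time distributing the regularity $\Lambda_t^d$ versus $\Lambda_t^b$ between the high- and low-frequency factors so that the low factor (including its surviving derivative) is summable in frequency---this is what pins down $b>2$---and so that the anisotropic Biot--Savart gains from $\hat v^1\sim\xi/|\eta|^2$ and $\hat v^2\sim k/|\eta|^2$ are used consistently, in particular near the zero mode.
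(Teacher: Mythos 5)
Your overall strategy (reduce to a commutator via $\dive\, v=0$, control the symbol difference by a mean-value inequality in the sheared frequency, then close with Biot--Savart, the $L^1$-embedding and Young's convolution inequality) is the same as the paper's, but there is a genuine gap at exactly the point you defer to ``bookkeeping.'' In Step 2 you weaken the commutator gain to $\Lambda_t(\eta_v)$, and in Step 3 you then rely on the two inequalities $\|\Lambda_t(\eta_v)\hat v(\eta_v)\|_{L^1}\lesssim\|\Lambda_t^b w\|_{L^2}$ (low-frequency region) and $\|\Lambda_t^d v\|_{L^2}\lesssim\|\Lambda_t^d w\|_{L^2}$ (high-frequency region). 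Both are false: Biot--Savart gives $|\hat v(l,\eta)|=(l^2+\eta^2)^{-1/2}|\hat w(l,\eta)|$, so on the zero mode $l=0$ these quantities involve $|\hat w(0,\eta)|/|\eta|$, which no $L^2$-based norm of $w$ controls near $\eta=0$ (take $\hat w(0,\eta)=\mathbf{1}_{|\eta|\le 1}$: the right-hand side of the lemma is finite and the conclusion still holds, while your intermediate quantities are infinite). What you discard is that the true mean-value gain is not $\Lambda_t(\eta_v)\ge 1$ but the \emph{sheared} magnitude $|l|+|\eta+lt|$ of the increment, which vanishes linearly as $(l,\eta)\to 0$ and exactly cancels the Biot--Savart singularity. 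A secondary, related flaw: your high/low split is made with plain magnitudes $|\eta_v|\gtrless|\eta_f|$, but then the region-2 bound $\Lambda_t^d(\eta)\lesssim\Lambda_t^d(\eta_v)$ fails (e.g.\ $\eta_f=(1,0)$, $\eta_v=(0,2)$, $t$ large); the split would have to be made in terms of $\Lambda_t$ itself.

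The paper needs neither the false inequalities nor any region split. It keeps the sharp Taylor bound
\begin{align*}
|\Lambda_t^d(k,\xi)-\Lambda_t^d(k-l,\xi-\eta)|\le C\big(|l|+|\eta+lt|\big)\big(\Lambda_t^{d-1}(l,\eta)+\Lambda_t^{d-1}(k-l,\xi-\eta)\big),
\end{align*}
and pairs the prefactor with Biot--Savart,
\begin{align*}
\big(|l|+|\eta+lt|\big)|\hat v(l,\eta)|\le C\,\frac{\Lambda_t(l,\eta)}{(1+l^2+\eta^2)^{1/2}}\,|\hat w(l,\eta)|,
\end{align*}
so the velocity only ever enters through $\Lambda^{-1}\Lambda_t\hat w$ and $\Lambda^{-1}\Lambda_t^d\hat w$, where $\Lambda^{-1}=(1+l^2+\eta^2)^{-1/2}\in L^4$. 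The two terms of the Taylor bound then feed directly into Young's inequality in the forms $L^1*L^2\cdot L^2$ and $L^{4/3}*L^{4/3}\cdot L^2$, producing precisely your two target terms, with $b>2$ entering through $\|\Lambda^{-1}\widehat{\Lambda_t w}\|_{L^1}\lesssim\|\Lambda_t^b w\|_{L^2}$ and $\|\Lambda_t\hat f\|_{L^{4/3}}\lesssim\|\Lambda_t^b f\|_{L^2}$; the zero-mode case is the same computation with $l=0$, where the surviving derivative is only $|k|\le\Lambda_t$, killing the factor $(1+t)$. Your argument can be repaired along these lines (in your region 2 one may alternatively use $|\eta_f|\le|\eta_v|$ to write $|\eta_f||\hat v(\eta_v)|\le|\hat w(\eta_v)|$), but as written it does not go through: the cancellation between the commutator gain and the Biot--Savart singularity is not optional bookkeeping, it is the step that makes a right-hand side involving only vorticity norms achievable.
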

\begin{proof}
    Since $\dive\, v=0$, we know
\begin{align*}
    \langle v\cdot\nabla(\Lambda_t^df),\Lambda_t^df\rangle=0.
\end{align*}
Then by Plancherel's formula, we have
\begin{align*}
    &\langle \Lambda_t^d\big(v\cdot\nabla f\big),\Lambda_t^d f\rangle=\langle \Lambda_t^d\big(v\cdot\nabla f\big)-v\cdot\nabla(\Lambda_t^df),\Lambda_t^df\rangle\\
    =&\sum_{k,l}\iint\big(\Lambda_t^d(k,\xi)-\Lambda_t^d(k-l,\xi-\eta)\big)\hat{v}(l,\eta)\cdot i(k-l,\xi-\eta)\\
    &\times\hat{f}(k-l,\xi-\eta)\cdot\Lambda_t^d(k,\xi)\overline{\hat{f}(k,\xi)}d\xi d\eta.
\end{align*}
By Taylor's formula, for some $\alpha\in[0,1]$, we have
\begin{align*}
    &|\Lambda_t^d(k,\xi)-\Lambda_t^d(k-l,\xi-\eta)|\\
    =&\frac{d}{2}\left|\Lambda_t^2(k,\xi)-\Lambda_t^2(k-l,\xi-\eta)\right|\left|\alpha\Lambda_t^2(k,\xi)+(1-\alpha)\Lambda_t^2(k-l,\xi-\eta)\right|^{\frac{d}{2}-1},
\end{align*}
and since
\begin{align*}
    |\Lambda_t^2(k,\xi)-\Lambda_t^2(k-l,\xi-\eta)|=&|k^2+(\xi+kt)^2-(k-l)^2-(\xi-\eta+t(k-l))^2|\\
    =&|l(2k-l)+(\eta+lt)(2\xi+2kt-\eta-lt)|\\
    \leq&C\big(|l|+|\eta+lt|\big)\left|\Lambda_t(l,\eta)+\Lambda_t(k-l,\xi-\eta)\right|.
\end{align*}
We know
\begin{align*}
&|\Lambda_t^d(k,\xi)-\Lambda_t^d(k-l,\xi-\eta)|\\
    \leq&C|l(2k-l)+(\eta+lt)(2\xi+2kt-\eta-lt)|\left|\Lambda_t^{d-2}(l,\eta)+\Lambda_t^{d-2}(k-l,\xi-\eta)\right|\\
    \leq&C\big(|l|+|\eta+lt|\big)\left|\Lambda_t^{d-1}(l,\eta)+\Lambda_t^{d-1}(k-l,\xi-\eta)\right|.
\end{align*}
Then, we have
\begin{align*}
    \left|\langle \Lambda_t^d(v\cdot \nabla f), \Lambda_t^df\rangle\right|\leq &C\sum_{k,l}\iint\big(|l|+|\eta+lt|\big)\left|\Lambda_t^{d-1}(l,\eta)+\Lambda_t^{d-1}(k-l,\xi-\eta)\right|\\
    &\times |\hat{v}(l,\eta)|\left|(k-l,\xi-\eta)\hat{f}(k-l,\xi-\eta)\right|\cdot\Lambda_t^d(k,\xi)\left|\overline{\hat{f}(k,\xi)}\right|d\xi d\eta.
\end{align*}
We know the fact that
\begin{align*}
        |l|+|\eta+lt|\leq C\Lambda_t(l,\eta)\leq C\frac{(l^2+\eta^2)^{\f12}}{(l^2+\eta^2+1)^{\f12}}\Lambda_t(l,\eta);
    \end{align*}
 and $v=\nabla^{\perp}(-\Delta)^{-1}w$, we know $(l^2+\eta^2)^{\f12}|\hat{v}(l,\eta)|=|\hat{w}(l,\eta)|$ and 
    \begin{align*}
        \big(|l|+|\eta+lt|\big)|\hat{v}(l,\eta)|\leq C\frac{(l^2+\eta^2)^{\f12}}{(l^2+\eta^2+1)^{\f12}}\Lambda_t(l,\eta)|\hat{v}(l,\eta)|\leq C\frac{\Lambda_t(l,\eta)}{(l^2+\eta^2+1)^{\f12}}|\hat{w}(l,\eta)|.
    \end{align*}   
Moreover, for $(k',\xi')=(k-l,\xi-\eta)$, we have
    \begin{align*}
        |(k',\xi')|\leq|\xi'+tk'|+(1+t)|k'|\leq(1+t)\Lambda_t(k',\xi').
    \end{align*}
    Then by Young's convolution inequality, we get
\begin{align*}
    \left|\langle \Lambda_t^d(v\cdot \nabla f), \Lambda_t^df\rangle\right|\leq&C\sum_{k,l}\iint \frac{\Lambda_t(l,\eta)}{(l^2+\eta^2+1)^{\f12}}|\hat{w}(l,\eta)|\left|\Lambda_t^{d-1}(l,\eta)+\Lambda_t^{d-1}(k-l,\xi-\eta)\right|\\
    &\times (1+t)\left|\Lambda_t(k-l,\xi-\eta)\hat{f}(k-l,\xi-\eta)\right|\cdot\Lambda_t^d(k,\xi)\left|\overline{\hat{f}(k,\xi)}\right|d\xi d\eta\\
    \leq&C(1+t)\Big(\|\Lambda^{-1}\Lambda_t\hat{w}\|_{L^1}\|\Lambda_t^df\|_{L^2}\|\Lambda_t^df\|_{L^2}+\|\Lambda^{-1}\Lambda_t^d\hat{w}\|_{L^{\f43}}\|\Lambda_t\hat{f}\|_{\f43}\|\Lambda_t^df\|_{L^2}\Big),
\end{align*}
where $\Lambda(l,\eta)=(l^2+\eta^2+1)^{\f12}$. Next, since $d-1\geq1$ and $\|\Lambda^{-1}\|_{L^4}\leq C\|\Lambda_t^{-1}\|_{L^4}\leq+\infty$, we have
    \begin{align*}
        &\|\Lambda^{-1}\widehat{\Lambda_tw}\|_{L^1}\leq C\|\widehat{\Lambda_tw}\|_{L^{\f43}}\leq C\|\Lambda_t^2w\|_{L^2}\leq C\|\Lambda_t^bw\|_{L^2},\\
        &\|\Lambda^{-1}\widehat{\Lambda_t^dw}\|_{L^{\f43}}\leq\|\Lambda_t^dw\|_{L^2},\\
        &\|\Lambda_t\hat{f}\|_{\f43}\leq C\|\Lambda^{-1}\Lambda_t^2\hat{f}\|_{L^{\f43}}\leq C\|\Lambda_t^bf\|_{L^2}.
    \end{align*}
For the zero mode case, we can get the result by taking $l=0$ in the proof. The lemma then follows directly. 
\end{proof}
The next lemma is used to estimate reaction terms.
\begin{lemma}\label{Lem:I11}
    Let $v=(v^1,v^2)=\nabla^{\perp}(-\Delta)^{-1}w$, then for $b>2$, it holds for $t\geq \nu^{-\f16}$ that
    \begin{align*}
        &\left|\big\langle \sqrt{\mathcal{M}_t}\Lambda_t^b(v^2\pa_yf),\sqrt{\mathcal{M}_t}\Lambda_t^bg\big\rangle\right| \\
        \leq&C\|\nabla\sqrt{\mathcal{M}_t}\Lambda_t^bv^2\|_{L^2}\big(\nu^{-\f13}\|\sqrt{\mathcal{M}_t}\Lambda_t^bf\|_{L^2}\|\sqrt{\Upsilon_t\mathcal{M}_t}\Lambda_t^bg\|_{L^2}+\nu^{\f16}\|D_y\Lambda_t^bf\|_{L^2}\|\sqrt{\mathcal{M}_t}\Lambda_t^bg\|_{L^2}\big) .
    \end{align*}
\end{lemma}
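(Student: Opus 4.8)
The plan is to expand the inner product in Fourier, distribute the output weight among the three factors, and split the frequency sum into a resonant (reaction) piece and a non-resonant (transport) piece that produce respectively the first and second terms on the right-hand side. Since $\sqrt{\mathcal{M}_t}$ and $\Lambda_t^b$ are self-adjoint Fourier multipliers, by Plancherel's formula I would first write
\[
\langle \sqrt{\mathcal{M}_t}\Lambda_t^b(v^2\pa_yf),\sqrt{\mathcal{M}_t}\Lambda_t^bg\rangle=\sum_{k,l}\iint \mathcal{M}(t,k,\xi)\Lambda_t^{2b}(k,\xi)\,\hat v^2(l,\eta)\, i(\xi-\eta)\,\hat f(k-l,\xi-\eta)\,\overline{\hat g(k,\xi)}\,d\eta\,d\xi ,
\]
where $\hat v^2$ is supported on $l\neq0$. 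Here $i(\xi-\eta)$ is exactly $\widehat{\pa_y f}$ at the frequency $(k-l,\xi-\eta)$, and the whole difficulty is to dispatch this factor together with the output weight $\mathcal{M}(t,k,\xi)\Lambda_t^{2b}(k,\xi)$ onto the three functions without losing more than the advertised powers of $\nu$. The weight itself will be split by the triangle-type bound $\Lambda_t^{2b}(k,\xi)\lesssim \Lambda_t^{2b}(l,\eta)+\Lambda_t^{2b}(k-l,\xi-\eta)$, which follows from $\Lambda_t(k,\xi)\lesssim \Lambda_t(l,\eta)+\Lambda_t(k-l,\xi-\eta)$.

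For the non-resonant contribution I would keep the factor $i(\xi-\eta)$ as a genuine $D_y$ on $f$ and exploit that $v^2$ lives on nonzero $x$-modes. Writing $\hat v^2(l,\eta)=(l^2+\eta^2)^{-1/2}\,\widehat{|\nabla|v^2}(l,\eta)$ and invoking the pointwise inequality $(l^2+\eta^2)\Lambda_t^2(l,\eta)\geq C^{-1}(1+t)^2$ established in the proof of Lemma \ref{Lem: ID}, I obtain
\[
|\hat v^2(l,\eta)|\leq \f{C}{1+t}\Lambda_t(l,\eta)\,\big|\widehat{|\nabla|v^2}(l,\eta)\big|\leq C\nu^{\f16}\Lambda_t(l,\eta)\,\big|\widehat{|\nabla|v^2}(l,\eta)\big| ,
\]
the last step using $t\geq\nu^{-\f16}$. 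This is the source of the $\nu^{1/6}$; the extra factor $\Lambda_t(l,\eta)$ is absorbed by the distribution of $\Lambda_t^{2b}(k,\xi)$, and Young's convolution inequality, applied exactly as in Lemma \ref{Lem: transport-shorttime} (with the $L^2,L^2,L^{4/3}$ Hölder splitting and $\|\Lambda^{-1}\|_{L^4}<\infty$), factors the sum into $\|\nabla\sqrt{\mathcal{M}_t}\Lambda_t^bv^2\|_{L^2}\,\|D_y\Lambda_t^bf\|_{L^2}\,\|\sqrt{\mathcal{M}_t}\Lambda_t^bg\|_{L^2}$.

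For the resonant contribution — where the output frequency is near the critical time, i.e. $\xi+t(k-l)$ is small — the output multiplier $\mathcal{M}(t,k,\xi)$ is much larger than its value at the frequency $(k-l,\xi-\eta)$ of $f$, and the gain is exactly what $\Upsilon$ measures: by the definition of $\mathcal{M}_3$ and $\Upsilon$ in \eqref{Upsilon}, the $m=l$ summand of $\Upsilon(t,k,\xi)$, namely $\f{|l|^{\mu}}{|l|}\f{1+|k-l|+|l|}{(1+|k-l|+|l|)^2+(\xi+t(k-l))^2}$, peaks precisely on this set and, after a Cauchy–Schwarz splitting $\Upsilon=\sqrt{\Upsilon}\cdot\sqrt{\Upsilon}$, supplies the weight $\sqrt{\Upsilon_t}$ onto $g$ while its remaining half, together with $(l^2+\eta^2)^{-1/2}$ and the $\mu$-gain $|l|^{\mu-1}$ from $\hat v^2$, is summed using $\sum_{l}\f{|l|^{\mu}}{|l|}\f{1}{1+|l|}\lesssim_\mu 1$. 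On this set $|\xi-\eta|\sim t|k-l|\lesssim(1+t)|k-l|$, and since $k\neq0$ the bound $(1+t)\leq C\nu^{-\f13}e^{\f{\delta_0}{2}\nu^{\f13}t}\leq C\nu^{-\f13}\sqrt{\mathcal{M}(t,k,\xi)}$ preceding \eqref{eq:Mf} converts the $\pa_y$ factor into the prefactor $\nu^{-\f13}$ (with $|k-l|\leq\Lambda_t(k-l,\xi-\eta)$ absorbed into the $\Lambda_t^{2b}$ distribution on $f$, so that $f$ carries only $\sqrt{\mathcal{M}_t}\Lambda_t^b$). This reproduces $\nu^{-\f13}\|\nabla\sqrt{\mathcal{M}_t}\Lambda_t^bv^2\|_{L^2}\,\|\sqrt{\mathcal{M}_t}\Lambda_t^bf\|_{L^2}\,\|\sqrt{\Upsilon_t\mathcal{M}_t}\Lambda_t^bg\|_{L^2}$.

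The step I expect to be the main obstacle is the uniform and clean handling of the single factor $\pa_y f$ across the two regions. One must verify that off resonance the relevant part of $\xi-\eta$ is a genuine $D_y$ closed by the $\nu^{1/6}$ inviscid-damping gain, while on the resonant set the same factor is absorbed by the enhanced-dissipation bound and the $\Upsilon$-multiplier with no extra derivative landing on $f$, and that the two regions recombine to reconstitute $|\xi-\eta|$ with no loss beyond $\nu^{-\f13}$ and $\nu^{\f16}$. Concretely, this requires controlling the ratio $\mathcal{M}(t,k,\xi)/\mathcal{M}(t,k-l,\xi-\eta)$ near resonance by a single summand of $\Upsilon$ and tracking the $\mu$-gain in the $x$-frequency; once this is in place the convolution bookkeeping is routine given Lemma \ref{Lem: transport-shorttime}.
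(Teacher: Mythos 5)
Your skeleton is the paper's own: Plancherel expansion, a transport piece closed by Lemma \ref{Lem:L1} together with $t\geq\nu^{-1/6}$, and a reaction piece closed by Cauchy--Schwarz against the kernel $\big((l^2+\eta^2)\Lambda_t^2(k-l,\xi-\eta)\big)^{-1}$, whose $\eta$-integral, via \eqref{eq:as}, is dominated summand-by-summand by $\Upsilon(t,k,\xi)$. But two of your concrete steps fail as written. First, the weight dispatch: splitting the whole output weight by $\Lambda_t^{2b}(k,\xi)\lesssim\Lambda_t^{2b}(l,\eta)+\Lambda_t^{2b}(k-l,\xi-\eta)$ leaves no $\sqrt{\mathcal{M}_t}\Lambda_t^b$ at the output frequency to put on $g$, and loads $f$ (or $v^2$) with $\Lambda_t^{2b}$, which cannot be reduced to the $\Lambda_t^b$-norms appearing in the statement; your claimed factorizations therefore do not follow from your own splitting rule. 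The paper distributes only \emph{one} copy, $\sqrt{\mathcal{M}(t,k,\xi)}\Lambda_t^b(k,\xi)\leq C_\mu^{1/2}e^{\delta_0\nu^{1/3}t/2}\big(\Lambda_t^b(l,\eta)+\Lambda_t^b(k-l,\xi-\eta)\big)$, keeps the second copy $\sqrt{\mathcal{M}(t,k,\xi)}\Lambda_t^b(k,\xi)$ on $g$ throughout, and only at the end converts the prefactor $e^{\delta_0\nu^{1/3}t/2}$ into $\sqrt{\mathcal{M}_t}$ on $v^2$ through \eqref{eq:Mneq}, using that $v^2$ has no zero mode. Relatedly, the dichotomy pairs with the \emph{destination of the distributed weight} (weight on $v^2$: reaction/Cauchy--Schwarz; weight on $f$: transport/Young), not with resonant versus non-resonant frequency regions: the Lorentzian produced by \eqref{eq:as} is a summand of $\Upsilon(t,k,\xi)$ at every frequency, so the Cauchy--Schwarz argument needs no region restriction, whereas if you assign Young's inequality to the whole non-resonant region, its weight-on-$v^2$ part is left uncovered (putting $\Lambda_t^b\hat v^2$ into $L^1$, or using your pointwise damping bound, which costs an extra factor $\Lambda_t(l,\eta)$, demands one more derivative of $v^2$ than the right-hand side provides).

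Second, your mechanism for the reaction piece is not the real one. The ratio $\mathcal{M}(t,k,\xi)/\mathcal{M}(t,k-l,\xi-\eta)$ is pinched between $C_\mu^{-1}$ and $C_\mu$, since $\mathcal{M}=e^{\delta_0\nu^{1/3}t}(\mathcal{M}_1+\mathcal{M}_2+\mathcal{M}_3+1)$ with the parenthesis in $[1,C_\mu]$; there is no gain to extract from it, and $\Upsilon$ does not arise as a bound for any such ratio. It arises as the $\eta$-integral of the Cauchy--Schwarz kernel, and the \emph{full} $\Upsilon$ (not a half) lands on $\|\sqrt{\Upsilon_t\mathcal{M}_t}\Lambda_t^bg\|_{L^2}^2$; no sum of the form $\sum_l|l|^{\mu-1}(1+|l|)^{-1}$ is needed in this lemma. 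Likewise the resonant-set heuristic $|\xi-\eta|\sim t|k-l|$ is false ($\eta$ is unrestricted, and for $k=l$, the zero mode of $f$, the right-hand side vanishes), so it cannot be the source of the $\nu^{-1/3}$. In the paper, both the $\nu^{-1/3}$ and the $\sqrt{\mathcal{M}_t}$ that ends up on $f$ come from the single global inequality \eqref{eq:Mf}: after the Cauchy--Schwarz grouping the $f$-factor is exactly $\|\Lambda_tD_yf\|_{L^2}\leq C\nu^{-1/3}\|\sqrt{\mathcal{M}_t}\Lambda_t^{2}f\|_{L^2}\leq C\nu^{-1/3}\|\sqrt{\mathcal{M}_t}\Lambda_t^{b}f\|_{L^2}$, with no case analysis in frequency. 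Your transport estimate, by contrast, is essentially the paper's $\mathcal{I}_2$ (Young's inequality, $\|\hat v^2\|_{L^1}\leq C(1+t)^{-1}\|\nabla\Lambda_t^bv^2\|_{L^2}$, then $(1+t)^{-1}\leq\nu^{1/6}$) and is correct.
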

\begin{proof}
    By Plancherel's formula, we have
         \begin{align*}
	\left|\big\langle \sqrt{\mathcal{M}_t}\Lambda_t^b(v^2\pa_yf),\sqrt{\mathcal{M}_t}\Lambda_t^bg\big\rangle\right|
	&=\bigg|\sum_{k,l}\iint\sqrt{\mathcal{M}(t,k,\xi)}\Lambda_t^b(k,\xi)\hat{v}^{2}(l,\eta)\\
	&\times i(\xi-\eta)\hat{f}(k-l,\xi-\eta)\sqrt{\mathcal{M}(t,k,\xi)}\Lambda_t^b(k,\xi)\overline{\hat{g}(k,\xi)}d\xi d\eta\bigg|=:\mathcal{I}.
	\end{align*}
	Combining with the bounds of $\mathcal{M}$, we have
	\begin{equation}\label{eq:boundsM}
	\begin{aligned}
		\sqrt{\mathcal{M}(t,k,\xi)}\Lambda_t^b(k,\xi)\leq C_{\mu}^{\f12}e^{1/2\delta_0\nu^{\f13}t}\Lambda_t^b(k,\xi)&\leq C_{\mu}^{\f12}e^{1/2\delta_0\nu^{\f13}t}\big(\Lambda_t^b(l,\eta)+\Lambda_t^b(k-l,\xi-\eta)\big),\\
		\sqrt{\mathcal{M}(t,k-l,\xi-\eta)}\Lambda_t^b(k-l,\xi-\eta)&\leq C_{\mu}^{\f12}e^{1/2\delta_0\nu^{\f13}t}\Lambda_t^b(k-l,\xi-l),\\
		\sqrt{\mathcal{M}(t,l,\eta)}\Lambda_t^b(l,\eta)&\leq C_{\mu}^{\f12}e^{1/2\delta_0\nu^{\f13}t}\Lambda_t^b(l,\eta).
	\end{aligned}
	\end{equation}
	Then we get
	\begin{align*}
		&|\mathcal{I}|\lesssim e^{\f12\delta_0\nu^{\f13}t}\sum_{k,l}\iint\Lambda_t^b(l,\eta)|\hat{v}^{2}(l,\eta)|
		 |\xi-\eta||\hat{f}(k-l,\xi-\eta)|\sqrt{\mathcal{M}(t,k,\xi)}\Lambda_t^b(k,\xi)|\hat{g}(k,\xi)|d\xi d\eta\\
		 &+e^{\f12\delta_0\nu^{\f13}t}\sum_{k,l}\iint\Lambda_t^b(k-l,\xi-\eta)|\hat{v}^{2}(l,\eta)|
		 |\xi-\eta||\hat{f}(k-l,\xi-\eta)|\sqrt{\mathcal{M}(t,k,\xi)}\Lambda_t^b(k,\xi)|\hat{g}(k,\xi)|d\xi d\eta\\
		 &=:e^{\f12\delta_0\nu^{\f13}t}(\mathcal{I}_{1}+\mathcal{I}_{2}).
	\end{align*}
	$\mathcal{I}_{1}$ corresponds to the interaction between the high mode of $v^{2}$ and the low mode of $f$. Moreover, $\mathcal{I}_{2}$ corresponds to the interaction between the low mode of $v^{2}$ and the high mode of $f$, which is weak for $t>T_0$ due to the inviscid damping effect.
	
	By Young's convolution inequality and taking $f=v^{2}$ in Lemma \ref{Lem:L1}, we have
	\begin{equation}
		\begin{aligned}
			|\mathcal{I}_{2}|\leq &C\|\hat{v}^{2}\|_{L^1}\|D_y\Lambda_t^bf\|_{L^2}\|\sqrt{\mathcal{M}_t}\Lambda_t^bg\|_{L^2}\\
			\leq& C(1+t)^{-1}\|\nabla\Lambda_t^bv^{2}\|_{L^2}\|D_y\Lambda_t^bf\|_{L^2}\|\sqrt{\mathcal{M}_t}\Lambda_t^bg\|_{L^2}.
		\end{aligned}
	\end{equation}
	For $\mathcal{I}_{1}$, by H\"older's inequality, we get $\mathcal{I}_{1}\leq (\mathcal{I}_{11})^{\f12}(\mathcal{I}_{12})^{\f12}$ with
	\begin{align*}
		\mathcal{I}_{11}:=&\sum_{k,l}\iint (l^2+\eta^2)|\Lambda_t^b(l,\eta)\hat{v}^{2}(l,\eta)|^2\Lambda_t^2(k-l,\xi-\eta)
		 |(\xi-\eta)\hat{f}(k-l,\xi-\eta)|^2d\xi d\eta,\\
		 \mathcal{I}_{12}:=&\sum_{k,l}\iint\frac{|\sqrt{\mathcal{M}(t,k,\xi)}\Lambda_t^b(k,\xi)\hat{g}(k,\xi)|^2}{(l^2+\eta^2)\Lambda_t^2(k-l,\xi-\eta)}d\xi d\eta.
	\end{align*}
	By Fubini's theorem, Plancherel's formula, and \eqref{eq:Mf}, we have
	\begin{align*}
		\mathcal{I}_{11}=\|\nabla \Lambda_t^bv^{2}\|_{L^2}^2\|\Lambda_tD_yf\|_{L^2}^2\leq C\nu^{-\f23}\|\nabla \Lambda_t^bv^{2}\|_{L^2}^2\|\sqrt{\mathcal{M}_t}\Lambda_t^bf\|_{L^2}^2.
	\end{align*}
	For $l\neq0$, by taking $a=|l|$, $s=1+|k-l|$, and $z=\xi+t(k-l)$ in \eqref{eq:as}, we get
	\begin{align*}
		\int \frac{d\eta}{(l^2+\eta^2)\Lambda_t^2(k-l,\xi-\eta)}=&\int \frac{d\eta}{(l^2+\eta^2)\big(1+(k-l)^2+(\xi-\eta+t(k-l))^2\big)}\\
		\leq&\frac{C}{|l|(1+|k-l|)}\f{1+|k-l|+|l|}{(1+|k-l|+|l|)^2+(\xi+t(k-l))^2}.
	\end{align*}
	Then according to the definition of $\Upsilon(t,k,\xi)$, we have
	\begin{equation}
		\begin{aligned}
			\mathcal{I}_{12}\leq &C\sum_{k,l\in\mathbb Z,l\neq0}\int\frac{1}{|l|(1+|k-l|)}\f{(1+|k-l|+|l|)|\sqrt{\mathcal{M}(t,k,\xi)}\Lambda_t^b(k,\xi)\hat{g}(k,\xi)|^2}{(1+|k-l|+|l|)^2+(\xi+t(k-l))^2}d\xi\\
			\leq& C\|\sqrt{\Upsilon_t\mathcal{M}_t}\Lambda_t^bg\|_{L^2}^2.
		\end{aligned}
	\end{equation}
	Thus we have
	\begin{align*}
		\mathcal{I}_{1}\leq (\mathcal{I}_{11})^{\f12}(\mathcal{I}_{12})^{\f12}\leq C\nu^{-\f13}\|\nabla \Lambda_t^bv^{2}\|_{L^2}\|\sqrt{\mathcal{M}_t}\Lambda_t^bf\|_{L^2}\|\sqrt{\Upsilon_t\mathcal{M}_t}\Lambda_t^bg\|_{L^2},
	\end{align*}
	and for $t\geq T_0=\nu^{-\f16}$ case, using the facts that $(1+t)^{-1}\leq T_0^{-1}=\nu^{\f16}$ and \eqref{eq:Mneq}, we have
		\begin{align*}
			|\mathcal{I}|\leq& e^{\f12\delta_0\nu^{\f13}t}(\mathcal{I}_{1}+\mathcal{I}_{2})
			\leq e^{\f12\delta_0\nu^{\f13}t}C\|\nabla \Lambda_t^bv^{2}\|_{L^2}\\
			&\times\big(\nu^{-\f13}\|\sqrt{\mathcal{M}_t}\Lambda_t^bf\|_{L^2}\|\sqrt{\Upsilon_t\mathcal{M}_t}\Lambda_t^bg\|_{L^2}+(1+t)^{-1}\|D_y\Lambda_t^bf\|_{L^2}\|\sqrt{\mathcal{M}_t}\Lambda_t^bg\|_{L^2}\big)\\
			\leq&C\|\nabla\sqrt{\mathcal{M}_t}\Lambda_t^bv^2\|_{L^2}\big(\nu^{-\f13}\|\sqrt{\mathcal{M}_t}\Lambda_t^bf\|_{L^2}\|\sqrt{\Upsilon_t\mathcal{M}_t}\Lambda_t^bg\|_{L^2}+\nu^{\f16}\|D_y\Lambda_t^bf\|_{L^2}\|\sqrt{\mathcal{M}_t}\Lambda_t^bg\|_{L^2}\big).
		\end{align*}
\end{proof}
The following lemma will be used to estimate the interaction in zero mode.
\begin{lemma}\label{Lem:u_0 comm-long}
    Let $v=(v^1,v^2)=\nabla^{\perp}(-\Delta)^{-1}w$, then for $b>2$, we have
    \begin{align*}
	&\left|\langle \sqrt{\mathcal{M}_t}\Lambda_t^b(v_0^1\pa_xf)-v_0^1\pa_x\big(\sqrt{\mathcal{M}_t}\Lambda_t^bf\big),\sqrt{\mathcal{M}_t}\Lambda_t^bg\rangle\right|\\
	\leq &C\|\Lambda_t^bw\|_{L^2}\||D_x|^{\f13}\sqrt{\mathcal{M}_t}\Lambda_t^bf\|_{L^2}\||D_x|^{\f13}\sqrt{\mathcal{M}_t}\Lambda_t^bg\|_{L^2}.
\end{align*}
\end{lemma}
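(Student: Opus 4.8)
The plan is to exploit that $v_0^1=v_0^1(t,y)$ is independent of $x$, so that multiplication by $v_0^1$ commutes with $\pa_x$, while the Fourier multiplier $\sqrt{\mathcal M_t}\Lambda_t^b$ also commutes with $\pa_x$. Hence the first argument of the inner product equals $\pa_x[\sqrt{\mathcal M_t}\Lambda_t^b,v_0^1]f$, and only $k\neq 0$ contributes. Writing $m(t,k,\xi)=\sqrt{\mathcal M(t,k,\xi)}\,\Lambda_t^b(k,\xi)$ and using that $v_0^1$ is supported at $x$-frequency $0$ (so it convolves only the $y$-frequency, leaving $k$ unchanged), Plancherel gives
\[
\big|\langle \pa_x[\sqrt{\mathcal M_t}\Lambda_t^b,v_0^1]f,\sqrt{\mathcal M_t}\Lambda_t^b g\rangle\big|\le \sum_{k\neq0}\iint |k|\,|\hat v_0^1(\eta)|\,|\hat f(k,\xi-\eta)|\,\big|m(t,k,\xi)-m(t,k,\xi-\eta)\big|\,m(t,k,\xi)|\hat g(k,\xi)|\,d\eta\,d\xi.
\]
Because $v=\nabla^\perp(-\Delta)^{-1}w$, at $k=0$ one has $|\eta|\,|\hat v_0^1(\eta)|=|\hat w_0(\eta)|$, so the factor $|\eta|$ produced by the symbol difference will be absorbed by $v_0^1$ and converted into $|\hat w_0|$.

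The heart of the matter is a pointwise bound for $m(t,k,\xi)-m(t,k,\xi-\eta)$ that simultaneously (i) produces the gained factor $|k|^{-\f13}$, so that the $|k|$ coming from $\pa_x$ becomes $|k|^{\f23}=|k|^{\f13}\cdot|k|^{\f13}$ to be split between $f$ and $g$, and (ii) places a full $m$ at the $f$-frequency $(k,\xi-\eta)$ without leaving a spurious extra $\sqrt{\mathcal M}(k,\xi)$ on the $g$-side. I would use the splitting
\[
m(t,k,\xi)-m(t,k,\xi-\eta)=\underbrace{\big[\sqrt{\mathcal M}(k,\xi)-\sqrt{\mathcal M}(k,\xi-\eta)\big]\Lambda_t^b(k,\xi-\eta)}_{A}+\underbrace{\sqrt{\mathcal M}(k,\xi)\,\big[\Lambda_t^b(k,\xi)-\Lambda_t^b(k,\xi-\eta)\big]}_{B}.
\]
For $A$, the mean value theorem together with Lemma~\ref{de-M} (which gives $|\pa_\xi\sqrt{\mathcal M}|\le C\sqrt{\mathcal M}(\nu^{\f13}|k|^{-\f13}+|k|^{-1})\le C\sqrt{\mathcal M}|k|^{-\f13}$ for $k\neq0$, $\nu\le1$) and the bounded-ratio property $\sqrt{\mathcal M}(k,\xi')\le \sqrt{C_\mu}\,\sqrt{\mathcal M}(k,\xi'')$ (a consequence of $1\le \mathcal M_1+\mathcal M_2+\mathcal M_3+1\le C_\mu$) yield $|A|\le C|\eta|\,|k|^{-\f13}m(t,k,\xi-\eta)$. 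For $B$, I would use the symbol-difference estimate established inside the proof of Lemma~\ref{Lem: transport-shorttime}, specialized to the zero mode $l=0$, namely $|\Lambda_t^b(k,\xi)-\Lambda_t^b(k,\xi-\eta)|\le C|\eta|\big(\Lambda_t^{b-1}(0,\eta)+\Lambda_t^{b-1}(k,\xi-\eta)\big)$, and then replace $\sqrt{\mathcal M}(k,\xi)$ by $\sqrt{C_\mu}\,\sqrt{\mathcal M}(k,\xi-\eta)$ via the same ratio bound, moving it onto the $f$-frequency.

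It then remains to insert these bounds into the integral. Using $|\eta|\,|\hat v_0^1(\eta)|=|\hat w_0(\eta)|$ the factor $|\eta|$ cancels everywhere; the inequality $\Lambda_t\ge|k|$ gives $|k|\Lambda_t^{b-1}(k,\xi-\eta)\le \Lambda_t^b(k,\xi-\eta)$ and $|k|^{\f13}\le \Lambda_t^b(k,\xi-\eta)$ (as $b>2$), so in every piece the $f$-side collapses into $|k|^{\f13}m(t,k,\xi-\eta)|\hat f(k,\xi-\eta)|$ and the $g$-side into $|k|^{\f13}m(t,k,\xi)|\hat g(k,\xi)|$, while the leftover $y$-frequency weight is either $|\hat w_0(\eta)|$ or $\langle\eta\rangle^{b-1}|\hat w_0(\eta)|$, with $\langle\eta\rangle=\Lambda_t(0,\eta)$. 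Both are in $L^1_\eta$ with $\|\hat w_0\|_{L^1}+\|\langle\eta\rangle^{b-1}\hat w_0\|_{L^1}\le C\|\Lambda_t^b w\|_{L^2}$ by Cauchy--Schwarz, since $b>2>1$ guarantees $\langle\eta\rangle^{-1}\in L^2_\eta$ and that the required $\eta$-regularity of $w_0$ is dominated by $\Lambda_t^b$. A final application of Young's convolution inequality in $\xi$ followed by Cauchy--Schwarz in $k$ assembles the three factors into the claimed bound $C\|\Lambda_t^bw\|_{L^2}\||D_x|^{\f13}\sqrt{\mathcal M_t}\Lambda_t^bf\|_{L^2}\||D_x|^{\f13}\sqrt{\mathcal M_t}\Lambda_t^bg\|_{L^2}$.

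The main obstacle is exactly point (ii): a naive mean value theorem applied to the combined symbol $m=\sqrt{\mathcal M}\Lambda_t^b$ would only control the difference by $m$ evaluated at an intermediate point, and since the $y$-frequency $\eta$ of $v_0^1$ is unrestricted, comparing $\Lambda_t^b$ at the intermediate point with $\Lambda_t^b(k,\xi-\eta)$ costs an uncontrollable power $\langle\eta\rangle^{b}$, exceeding the regularity of $w_0$ supplied by $\|\Lambda_t^bw\|_{L^2}$. Separating the $\sqrt{\mathcal M}$-difference (the sole source of the $|k|^{-\f13}$ gain, whose slow variation is genuinely controlled by Lemma~\ref{de-M}) from the $\Lambda_t^b$-difference (handled by the endpoint estimate, which costs only $\langle\eta\rangle^{b-1}$) is what keeps the $y$-frequency loss within the budget $\langle\eta\rangle^{b-1}$ that $w_0\in H^b$ can pay, while the bounded-ratio property of $\sqrt{\mathcal M}$ is what lets all $\sqrt{\mathcal M}$ factors be realigned to the correct frequency.
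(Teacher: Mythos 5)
Your proof is correct, and its skeleton is the same as the paper's: Plancherel reduces the commutator to frequency space, the identity $|\eta|\,|\hat v_0^1(\eta)|=|\hat w_0(\eta)|$ converts the loss onto $w_0$, the symbol difference is controlled through the derivative bound of Lemma \ref{de-M}, the factor $|k|$ from $\pa_x$ is distributed as $|D_x|^{\f13}$ on $f$ and $g$, and Young's convolution inequality plus Cauchy--Schwarz in $k$ closes the estimate. The modest but genuine difference is in how the symbol difference is organized. The paper applies Taylor directly to the product $\sqrt{\mathcal{M}}\Lambda_t^b$ via \eqref{Mm}, bounds the intermediate $\sqrt{\mathcal{M}(k,\xi-s\eta)}$ crudely by $C_\mu^{\f12}e^{\f12\delta_0\nu^{\f13}t}$, controls the intermediate $\Lambda_t^b(k,\xi-s\eta)$ by $C\big(\Lambda_t^b(0,\eta)+\Lambda_t^b(k,\xi-\eta)\big)$, and re-installs $\sqrt{\mathcal{M}_t}$ only at the last line through \eqref{eq:Mneq} (legitimate because the factor $ik$ restricts everything to nonzero modes); your Leibniz splitting $A+B$ instead keeps $\sqrt{\mathcal{M}}$ attached at the correct frequency throughout, realigned by the bounded-ratio property $\mathcal{M}(t,k,\xi)\le C_\mu\,\mathcal{M}(t,k',\xi')$ --- which is the paper's two-sided bound $e^{\delta_0\nu^{\f13}t}\le\mathcal{M}\le C_\mu e^{\delta_0\nu^{\f13}t}$ in different clothing. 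One thing to correct: the ``main obstacle'' you describe at the end is not real. The naive mean value theorem on the combined symbol is precisely what the paper does, and the resulting weight $\Lambda_t^b(0,\eta)=\langle\eta\rangle^b$ on $w_0$ is affordable: for that term one simply flips the Young pairing, putting $\langle\eta\rangle^b\hat w_0$ in $L^2_\eta$ (which is exactly $\|\Lambda_t^b w_0\|_{L^2}$) and the unweighted $\hat f(k,\cdot)$ in $L^1_\xi$, the latter controlled by $\|\Lambda_t^b f\|_{L^2}$ via Cauchy--Schwarz since $b>2$. Your variant, which caps the loss at $\langle\eta\rangle^{b-1}$ so that the $w_0$-factor can stay in $L^1_\eta$ in every term, is a clean alternative, but it is a choice of bookkeeping rather than a necessity.
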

\begin{proof}
	By Plancherel's formula and the fact $\hat{v}^1(0,\eta)=(i\eta)^{-1}\hat{w}(0,\eta)$, we have
	 \begin{align*}
		 &\left|\langle \sqrt{\mathcal{M}_t}\Lambda_t^b(v_0^1\pa_xf)-v_0^1\pa_x\big(\sqrt{\mathcal{M}_t}\Lambda_t^bf\big),\sqrt{\mathcal{M}_t}\Lambda_t^bg\rangle\right|\\
		 =&\sum_{k}\iint\big(\sqrt{\mathcal{M}(k,\xi)}\Lambda_t^b(k,\xi)-\sqrt{\mathcal{M}(k,\xi-\eta)}\Lambda_t^b(k,\xi-\eta)\big)ik\\
		 &\times  \hat{f}(k,\xi-\eta)\hat{v}^1(0,\eta)\sqrt{\mathcal{M}(k,\xi)}\Lambda_t^b(k,\xi)\overline{\hat{g}(k,\xi)}d\xi d\eta\\
		 =&\sum_{k}\iint\big(\sqrt{\mathcal{M}(k,\xi)}\Lambda_t^b(k,\xi)-\sqrt{\mathcal{M}(k,\xi-\eta)}\Lambda_t^b(k,\xi-\eta)\big)k\\
		 &\times  \hat{f}(k,\xi-\eta)\f{\hat{w}(0,\eta)}{\eta}\sqrt{\mathcal{M}(k,\xi)}\Lambda_t^b(k,\xi)\overline{\hat{g}(k,\xi)}d\xi d\eta.
	 \end{align*}
	 By Taylor's formula, we have
	 \begin{align*}
		 &\left|\sqrt{\mathcal{M}(k,\xi)}\Lambda_t^b(k,\xi)-\sqrt{\mathcal{M}(k,\xi-\eta)}\Lambda_t^b(k,\xi-\eta)\right|\\
		 \leq&\int_0^1\left|\pa_{\xi}\big(\sqrt{\mathcal{M}(k,\xi-s\eta)}\Lambda_t^b(k,\xi-s\eta)\big)\right||\eta|ds.
	 \end{align*}
	 According to Lemma \ref{de-M}, we know for $k\neq0$
	 \begin{equation}\label{Mm}
	 \begin{aligned}
		 \pa_{\xi}\big(\sqrt{\mathcal{M}(k,\xi)}\Lambda_t^b(k,\xi)\big)&\leq C(\nu^{\f13}|k|^{-\f13}+|k|^{-1})\sqrt{\mathcal{M}(k,\xi)}\Lambda_t^b(k,\xi)\\
		 &\leq C(\nu^{\f13}|k|^{-\f13}+|k|^{-1})e^{\f12\delta_0\nu^{\f13}t}\Lambda_t^b(k,\xi).
	 \end{aligned}
	 \end{equation}
	 Therefore, by Young's convolution inequality, \eqref{eq:boundsM}, and taking $f=|D_x|^{\f13}f$ in \eqref{eq:Mneq}, we have
		 \begin{align*}
			 &\left|\langle \sqrt{\mathcal{M}_t}\Lambda_t^b(v_0^1\pa_xf)-v_0^1\pa_x\big(\sqrt{\mathcal{M}_t}\Lambda_t^bf\big),\sqrt{\mathcal{M}_t}\Lambda_t^bg\rangle\right|\\
			 \leq &Ce^{\delta_0\nu^{\f13}t}\sum_{k\neq0}\iint(\nu^{\f13}|k|^{\f23}+1)\big(\Lambda_t^b(0,\eta)+\Lambda_t^b(k,\xi-\eta)\big)|\hat{f}(k,\xi-\eta)\hat{w}(0,\eta)|\\
		 &\times \Lambda_t^b(k,\xi)|\hat{g}(k,\xi)|d\xi d\eta\\
		 \leq&Ce^{\delta_0\nu^{\f13}t}\big(\nu^{\f13}\|\Lambda^b_tw_0\|_{L^2}\||D_x|^{\f13}\Lambda_t^bf\|_{L^2}\||D_x|^{\f13}\Lambda_t^bg\|_{L^2}+\|\Lambda^b_tw_0\|_{L^2}\|\Lambda_t^bf\|_{L^2}\|\Lambda_t^bg\|_{L^2}\big)\\
		 \leq&Ce^{\delta_0\nu^{\f13}t}\|\Lambda_t^bw_0\|_{L^2}\||D_x|^{\f13}\Lambda_t^bf\|_{L^2}\||D_x|^{\f13}\Lambda_t^bg\|_{L^2}\\
		 \leq& C\|\Lambda_t^bw\|_{L^2}\||D_x|^{\f13}\sqrt{\mathcal{M}_t}\Lambda_t^bf\|_{L^2}\||D_x|^{\f13}\sqrt{\mathcal{M}_t}\Lambda_t^bg\|_{L^2}.
		 \end{align*}
\end{proof}
\begin{lemma}\label{Lem:I31}
    Let $v=(v^1,v^2)=\nabla^{\perp}(-\Delta)^{-1}w$, then for $b>2$, we have for $t\geq \nu^{-\f16}$ that
    \begin{align*}
	&\left|\big\langle \sqrt{\mathcal{M}_t}\Lambda_t^b(v^1_{\neq}\pa_xf)-v^1_{\neq}\pa_x\big(\sqrt{\mathcal{M}_t}\Lambda_t^bf\big),\sqrt{\mathcal{M}_t}\Lambda_t^bf\big\rangle\right|\\
	\leq&C\|\sqrt{\mathcal{M}_t}\Lambda_t^bw\|_{L^2}\big(\||D_x|^{\f13}\sqrt{\mathcal{M}_t}\Lambda_t^bf\|_{L^2}^2+\nu^{\f23}\|D_y\Lambda_t^bf\|_{L^2}\||D_x|^{\f13}\sqrt{\mathcal{M}_t}\Lambda_t^bf\|_{L^2}\big) .
\end{align*}
\end{lemma}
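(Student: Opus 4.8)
The plan is to read the quantity as the commutator $\langle[\sqrt{\mathcal{M}_t}\Lambda_t^b,\,v^1_{\neq}\pa_x]f,\,\sqrt{\mathcal{M}_t}\Lambda_t^bf\rangle$ and to expand it by Plancherel's formula exactly as in Lemma \ref{Lem:u_0 comm-long}, the difference being that $v^1_{\neq}$ now lives on the nonzero modes $l\neq0$. Writing $m(k,\xi)=\sqrt{\mathcal{M}(t,k,\xi)}\,\Lambda_t^b(k,\xi)$ and using $\hat v^1(l,\eta)=-i\eta(l^2+\eta^2)^{-1}\hat w(l,\eta)$, the expression whose modulus I must bound is
\begin{align*}
\sum_{k,\,l\neq0}\iint\big(m(k,\xi)-m(k-l,\xi-\eta)\big)(k-l)\,\hat f(k-l,\xi-\eta)\,\f{\eta\,\hat w(l,\eta)}{l^2+\eta^2}\,m(k,\xi)\,\overline{\hat f(k,\xi)}\,d\xi\,d\eta.
\end{align*}
The essential object is the symbol difference $m(k,\xi)-m(k-l,\xi-\eta)$, and the gain it provides is what prevents the factor $(k-l)$ coming from $\pa_x$ from producing a full $x$-derivative loss. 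I would split this difference into a pure $\xi$-shift $m(k,\xi)-m(k,\xi-\eta)$ and a pure $k$-shift $m(k,\xi-\eta)-m(k-l,\xi-\eta)$.

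For the $\xi$-shift I would use Lemma \ref{de-M}, which gives $|\pa_\xi m|\leq C(\nu^{\f13}|k|^{-\f13}+|k|^{-1})\,m$, so by the mean value theorem this piece carries a factor $|\eta|(\nu^{\f13}|k|^{-\f13}+|k|^{-1})$; the $|\eta|$ is harmless because $|\eta|\,|\hat v^1(l,\eta)|=\eta^2(l^2+\eta^2)^{-1}|\hat w(l,\eta)|\le|\hat w(l,\eta)|$. After transferring the residual weight $m(k,\xi)$ onto the $(k-l,\xi-\eta)$ or $(l,\eta)$ mode through \eqref{eq:boundsM} and \eqref{eq:Mneq}, the derivative factor $(k-l)$ is absorbed by the gain: the $|k|^{-1}$ part leaves an $O(1)$ factor which, since $|k|\ge1$ on nonzero modes, is dominated by $|D_x|^{\f13}$ on each copy of $f$, while the $\nu^{\f13}|k|^{-\f13}$ part turns $(k-l)\approx k$ into $\nu^{\f13}|k|^{\f23}$, again distributing as $|D_x|^{\f13}$ on each copy with a harmless extra $\nu^{\f13}\le1$. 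Together with $\|\hat w\|$ collected into $\|\sqrt{\mathcal{M}_t}\Lambda_t^bw\|_{L^2}$ via \eqref{eq:M}, this is what produces the first term $\|\sqrt{\mathcal{M}_t}\Lambda_t^bw\|_{L^2}\||D_x|^{\f13}\sqrt{\mathcal{M}_t}\Lambda_t^bf\|_{L^2}^2$.

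For the remaining contributions I would separate, as in Lemma \ref{Lem:I11}, the interaction of a high mode of $v^1_{\neq}$ with a low mode of $f$ from that of a low mode of $v^1_{\neq}$ with a high mode of $f$; the latter is where inviscid damping enters. In the low-$v^1$/high-$f$ regime I would bound $v^1_{\neq}$ through Lemma \ref{Lem:L1} and the identity $|\widehat{\nabla v^1}(l,\eta)|\le|\hat w(l,\eta)|$, giving $\|\hat v^1_{\neq}\|_{L^1}\le C(1+t)^{-1}\|\sqrt{\mathcal{M}_t}\Lambda_t^bw\|_{L^2}$, with further decay supplied by Lemma \ref{Lem: ID}. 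Restricting to $t\ge\nu^{-\f16}$ converts each factor $(1+t)^{-1}\le\nu^{\f16}$, and the combination of this inviscid-damping smallness with the $\nu^{\f13}$ enhanced-dissipation gain of the symbol difference is what upgrades the prefactor to $\nu^{\f23}$. Keeping the $y$-derivative on the high mode of $f$ then yields the factor $\|D_y\Lambda_t^bf\|_{L^2}$ (this is also the norm that captures the $k=0$ output mode, which the $\pa_\xi m$ estimate cannot reach), while the symbol gain gives $\||D_x|^{\f13}\sqrt{\mathcal{M}_t}\Lambda_t^bf\|_{L^2}$, producing the second term.

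The hard part will be the $k$-shift $m(k,\xi-\eta)-m(k-l,\xi-\eta)$, which is genuinely new relative to Lemma \ref{Lem:u_0 comm-long}: there $v$ was the zero mode, only $\xi$ changed, and a single use of Lemma \ref{de-M} sufficed, whereas here $l\neq0$ forces a simultaneous shift in both arguments of $\sqrt{\mathcal{M}_t}\Lambda_t^b$, and the $\pa_k$ bound of Lemma \ref{de-M} is available only for $|k|>\nu^{-\f12}$. I would control the $k$-shift by the triangle inequality together with \eqref{eq:boundsM}, arranging the splitting so that the difference is absorbed either into the weight $\Lambda_t^b(l,\eta)$ (hence into $\|\sqrt{\mathcal{M}_t}\Lambda_t^bw\|_{L^2}$) or into the low mode $\Lambda_t^b(k-l,\xi-\eta)$, never generating a full $x$-derivative. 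The most delicate accounting is confirming that the two routes to the prefactor — the $\nu^{\f13}$ from $\pa_\xi\mathcal{M}_1$ interacting with the inviscid-damping powers of $\nu^{\f16}$ on $t\ge\nu^{-\f16}$, across the regimes $|k|\lessgtr\nu^{-\f12}$ — both land on exactly $\nu^{\f23}$, and verifying that the weak norms $\|D_y\Lambda_t^bf\|_{L^2}$ and $\||D_x|^{\f13}\sqrt{\mathcal{M}_t}\Lambda_t^bf\|_{L^2}$, rather than anything stronger, suffice throughout.
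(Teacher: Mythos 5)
Your ingredient list (Plancherel expansion of the commutator, symbol gains from Lemma \ref{de-M}, a high/low splitting, Biot--Savart, inviscid damping on $t\geq\nu^{-\f16}$) matches the paper's, but the order in which you deploy them is structurally different from the paper, and in that order two of your concrete claims are false. The paper does \emph{not} split the symbol difference into a $\xi$-shift plus a $k$-shift globally; it first splits the frequency set into $A_1=\{|l|\geq\f12|k-l|\ \text{or}\ \min(|k|,|k-l|)\leq\nu^{-\f12}\}$ and its complement $A_2$ (I am using your labeling: $v^1$ at $(l,\eta)$, inner $f$ at $(k-l,\xi-\eta)$), and applies the mean-value/Taylor argument \emph{only on} $A_2$, where $|l|<\f12|k-l|$ forces $|k|\approx|k-l|$ and where $\min(|k|,|k-l|)>\nu^{-\f12}$ keeps the whole segment of $x$-frequencies inside $\{|k'|>\nu^{-\f12}\}$. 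Both restrictions are essential. Your global $\xi$-shift claim --- that $|k-l|\big(\nu^{\f13}|k|^{-\f13}+|k|^{-1}\big)$ is dominated by $|D_x|^{\f13}$ on each copy of $f$ --- fails whenever the velocity frequency dominates: take $k=1$, $l=-N$, $k-l=N+1$; then $|k-l||k|^{-1}=N+1$ while $|k-l|^{\f13}|k|^{\f13}\approx N^{\f13}$. In that regime the only rescue is to put the derivative on the velocity via $|l||\hat v^1(l,\eta)|\leq\f12|\hat w(l,\eta)|$, but you have already spent the Biot--Savart gain on $|\eta|$ (your step $|\eta||\hat v^1|\leq|\hat w|$), and it cannot be used twice. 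Likewise Taylor in the $k$-variable is illegitimate off $A_2$: $\mathcal{M}_2$ and $\mathcal{M}_3$ are cut off at $|k|=\nu^{-\f12}$, so $\pa_k\mathcal{M}$ does not exist across that threshold, and your fallback for the $k$-shift (triangle inequality plus absorbing weights) fails twice over --- the intermediate frequency $(k,\xi-\eta)$ is none of the three natural frequencies, so $\Lambda_t^b(k,\xi-\eta)$ can be moved onto $\Lambda_t^b(l,\eta)+\Lambda_t^b(k-l,\xi-\eta)$ only at the cost of a factor $(1+t)$, and, worse, the triangle inequality discards the symbol gain, so the full derivative factor $|k-l|$ survives with no mechanism left to absorb it.

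The two mechanisms that actually close the proof are missing or misattributed in your plan. On $A_1$ the paper uses \emph{no} symbol gain at all: it uses the elementary inequality $|k-l|\leq2\big(|l|+\nu^{-\f16}|k-l|^{\f13}|k|^{\f13}\big)$, valid exactly on $A_1$, which converts the full $x$-derivative on $f$ either into a derivative on $v^1$ (harmless) or into $|D_x|^{\f13}$ on each copy of $f$ at the price $\nu^{-\f16}$; that price is then paid by Lemma \ref{Lem:L1} together with $t\geq\nu^{-\f16}$, i.e. $\|\hat v^1_{\neq}\|_{L^1}\lesssim(1+t)^{-1}\|\Lambda_t^bw\|_{L^2}\leq\nu^{\f16}\|\Lambda_t^bw\|_{L^2}$. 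This is where inviscid damping enters, and it produces the \emph{first} term of the lemma, not the second. The second term $\nu^{\f23}\|D_y\Lambda_t^bf\|_{L^2}\||D_x|^{\f13}\sqrt{\mathcal{M}_t}\Lambda_t^bf\|_{L^2}$ owes nothing to inviscid damping: your bookkeeping $\nu^{\f16}\cdot\nu^{\f13}=\nu^{\f12}$ indeed falls short of $\nu^{\f23}$, and no second factor of $(1+t)^{-1}$ can be stacked, since $v^1_{\neq}$ (unlike $v^2$) decays only like $t^{-1}$. In the paper this term comes from the $\pa_k$-derivative of $\mathcal{M}_1$ on $A_2$: by Lemma \ref{de-M}, $|\pa_k\mathcal{M}|\lesssim\mathcal{M}\,\nu^{\f12}|k|^{-1}|\xi-\eta|$ along the shift, and on $A_2$ one has $\nu^{\f12}|\xi-\eta|\leq\nu^{\f23}|k|^{\f13}|\xi-\eta|$ precisely because $|k|>\nu^{-\f12}$; the $y$-frequency $|\xi-\eta|$ brought down by differentiating $\mathcal{M}_1$ in $k$ is what becomes $\|D_y\Lambda_t^bf\|_{L^2}$. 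It cannot come from ``keeping the $y$-derivative on the high mode of $f$'': the term $v^1_{\neq}\pa_xf$ contains no $y$-derivative of $f$. So the two points you defer as ``delicate accounting'' are exactly where the argument, as proposed, does not close: without the $A_1$/$A_2$ splitting, the combinatorial inequality, and the $\pa_k\mathcal{M}_1$ computation, neither term of the stated bound is reachable.
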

\begin{proof}
By Plancherel's formula, we have
	 \begin{align*}
		 &\left|\big\langle \sqrt{\mathcal{M}_t}\Lambda_t^b(v^1_{\neq}\pa_xf)-v^1_{\neq}\pa_x\big(\sqrt{\mathcal{M}_t}\Lambda_t^bf\big),\sqrt{\mathcal{M}_t}\Lambda_t^bf\big\rangle\right|\\
		 =&\sum_{k,l}\iint\big(\sqrt{\mathcal{M}(t,k,\xi)}\Lambda_t^b(k,\xi)-\sqrt{\mathcal{M}(t,l,\eta)}\Lambda_t^b(l,\eta)\big)\hat{v}_{\neq}^1(k-l,\xi-\eta)\\
		 &\times il\hat{f}(l,\eta)\sqrt{\mathcal{M}(t,k,\xi)}\Lambda_t^b(k,\xi)\overline{\hat{f}(k,\xi)}d\xi d\eta=:\mathcal{J}.
	 \end{align*}
	 We split the range of $k$ and $l$ into two cases:
	 \begin{align*}
		 A_1=\Big\{(k,l)\in\mathbb Z^2: |k-l|\geq\f{|l|}{2}\ \mathrm{or}\ \min(|k|,|l|)\leq\nu^{-\f12} \Big\},\\
		 A_2=\Big\{(k,l)\in\mathbb Z^2: |k-l|<\f{|l|}{2}\ \mathrm{and}\ \min(|k|,|l|)>\nu^{-\f12} \Big\}.          
	 \end{align*}
	 Then we write $\mathcal{J}=\mathcal{J}_1+\mathcal{J}_2$, with
	 \begin{align*}
		 \mathcal{J}_j=&\sum_{(k,l)\in A_j}\iint\big(\sqrt{\mathcal{M}(t,k,\xi)}\Lambda_t^b(k,\xi)-\sqrt{\mathcal{M}(t,l,\eta)}\Lambda_t^b(l,\eta)\big)\hat{v}_{\neq}^1(k-l,\xi-\eta)\\
&\times il\hat{f}(l,\eta)\sqrt{\mathcal{M}(t,k,\xi)}\Lambda_t^b(k,\xi)\overline{\hat{f}(k,\xi)}d\xi d\eta.
	 \end{align*}
	 For $(k,l)\in A_2$, we apply Taylor's formula, and for $(k,l)\in A_1$, we use the bound $|\sqrt{\mathcal{M}(t,k,\xi)}\Lambda_t^b(k,\xi)-\sqrt{\mathcal{M}(t,l,\eta)}\Lambda_t^b(l,\eta)|\leq |\sqrt{\mathcal{M}(t,k,\xi)}\Lambda_t^b(k,\xi)|+|\sqrt{\mathcal{M}(t,l,\eta)}\Lambda_t^b(l,\eta)$. Notice that if $|k-l|\leq |l|/2$ and $\min(|k|,|l|)\leq \nu^{-\f12}$, then
	 \begin{align*}
		 |k|\geq\f{|l|}{2},\ \min(|k|,|l|)\geq\f{|l|}{2},\ |l|\leq 2\big(\min(|k|,|l|)|lk|\big)^{\f13}\leq 2\nu^{-\f16}|l|^{\f13}|k|^{\f13};
	 \end{align*}
	 if $|k-l|\geq|l|/2$, then $|l|\leq 2|k-l|$. Thus, if $(k,l)\in A_1$, then 
	 \begin{align*}
		 |l|\leq 2(|k-l|+\nu^{-\f16}|l|^{\f13}|k|^{\f13}).
	 \end{align*}
	 Then by \eqref{eq:boundsM} and Young's convolution inequality, for $t\geq T_0=\nu^{-\f16}$, we have
	 \begin{align*}
		 |\mathcal{J}_1|\leq &Ce^{\f12\delta_0\nu^{\f13}t}\sum_{(k,l)\in A_1}\iint\big(\Lambda_t^b(k-l,\xi-\eta)+\Lambda_t^b(l,\eta)\big)|\hat{v}_{\neq}^1(k-l,\xi-\eta)|\\
&\times |l||\hat{f}(l,\eta)|\sqrt{\mathcal{M}(t,k,\xi)}\Lambda_t^b(k,\xi)|\hat{f}(k,\xi)|d\xi d\eta\\
\leq& Ce^{\f12\delta_0\nu^{\f13}t}\sum_{(k,l)\in A_1}\iint\big(\Lambda_t^b(k-l,\xi-\eta)|l|+\Lambda_t^b(l,\eta)(|k-l|+\nu^{-\f16}|l|^{\f13}|k|^{\f13})\big)\\
&\times |\hat{v}_{\neq}^1(k-l,\xi-\eta)||\hat{f}(l,\eta)|\sqrt{\mathcal{M}(t,k,\xi)}\Lambda_t^b(k,\xi)|\hat{f}(k,\xi)|d\xi d\eta\\
\leq&Ce^{\f12\delta_0\nu^{\f13}t}\Big(\|\Lambda_t^bv^1_{\neq}\|_{L^2}\|\widehat{D_xf}\|_{L^1}\|\sqrt{\mathcal{M}_t}\Lambda_t^bf\|_{L^2}\\
&+\nu^{-\f16}\||D_x|^{\f13}\Lambda_t^bf\|_{L^2}\|\widehat{v^1_{\neq}}\|_{L^1}\||D_x|^{\f13}\sqrt{\mathcal{M}_t}\Lambda_t^bf\|_{L^2}\\
&+\|\widehat{D_xv_{\neq}^1}\|_{L^1}\|\Lambda_t^bf\|_{L^2}\|\sqrt{\mathcal{M}_t}\Lambda_t^bf\|_{L^2}\Big).
\end{align*}
Since $m>2$ , $m-\f23>1$, and $v^1_{\neq}=-\pa_y(-\Delta)^{-1}w_{\neq}$, we have
	 \begin{align*}
		 \|\widehat{D_xv^1_{\neq}}\|_{L^1}+\|\Lambda_t^bv^1_{\neq}\|_{L^2}\leq C\|D_x\Lambda_t^bv^1\|_{L^2}\leq C\|\Lambda_t^bw\|_{L^2}\leq C\|\sqrt{\mathcal{M}_t}\Lambda_t^bw\|_{L^2},\\
		 \|\widehat{D_xf}\|_{L^1}\leq C\|\Lambda_t^{m-\f23}D_xf\|_{L^2}\leq C\|\Lambda_t^{m}|D_x|^{\f13}f\|_{L^2}.
	 \end{align*}
Then, together with Lemma \ref{Lem:L1}, \eqref{eq:M}, \eqref{eq:Mneq}, and $t\geq T_0=\nu^{-\f16}$, we get
\begin{align*}
|\mathcal{J}_1|\leq& C\|e^{\f12\delta_0\nu^{\f13}t}\Lambda_t^{m}|D_x|^{\f13}f\|_{L^2}\|\Lambda_t^{m}w\|_{L^2}\|\sqrt{\mathcal{M}_t}\Lambda_t^bf\|_{L^2}\\
&+C\|e^{\f12\delta_0\nu^{\f13}t}\Lambda_t^{m}|D_x|^{\f13}f\|_{L^2}\|\Lambda_t^bw\|_{L^2}\||D_x|^{\f13}\sqrt{\mathcal{M}_t}\Lambda_t^bf\|_{L^2}\\
\leq&C\|\sqrt{\mathcal{M}_t}\Lambda_t^bw\|_{L^2}\||D_x|^{\f13}\sqrt{\mathcal{M}_t}\Lambda_t^bf\|_{L^2}^2.
	 \end{align*}

	 Next, we begin to estimate $\mathcal{J}_2$. For $(k,l)\in A_2$, we have $|k-l|\leq |l|/2<|l|$, $kl>0$, $|k|>\nu^{-\f12}$, and $|l|>\nu^{-\f12}$. By Taylor's formula, we get
	 \begin{align*}
		 &\left|\sqrt{\mathcal{M}(t,k,\xi)}\Lambda_t^b(k,\xi)-\sqrt{\mathcal{M}(t,l,\eta)}\Lambda_t^b(l,\eta)\right|\\
		 \leq& \int_0^1\left|\pa_{\xi}\big(\sqrt{\mathcal{M}(t,k,\eta+s(\xi-\eta))}\Lambda_t^b(k,\eta+s(\xi-\eta))\big)\right||\xi-\eta|ds\\
		 &+\int_0^1\left|\pa_{k}\big(\sqrt{\mathcal{M}(t,l+s(k-l),\eta)}\Lambda_t^b(l+s(k-l),\eta)\big)\right||k-l|ds.
	 \end{align*}
	 A direct calculation gives
	 \begin{align*}
		 |\pa_k\Lambda_t^b(k,\xi)|\leq C\Lambda_t^{m-2}(k,\xi)\big(|k|+|\xi+kt||t|\big),
		 \end{align*}
		since $|t|\leq |k|^{-1}(|\xi|+\Lambda_t(k,\xi))|$, we know
		\begin{align*}
		 |\pa_k\Lambda_t^b(k,\xi)|\leq C\Lambda_t^{m}(k,\xi)\big(|k|^{-1}+|k|^{-2}|\xi|\big).
	 \end{align*}
	 Then by Lemma \ref{de-M} and \eqref{eq:boundsM}, for $|k|>\nu^{-\f12}$, we have
	 \begin{align*}
		 \left|\pa_k\big(\sqrt{\mathcal{M}(t,k,\xi)}\Lambda_t^b(k,\xi)\big)\right|\leq &C\big(|k|^{-1}+|k|^{-2}|\xi|+\nu^{\f12}|k|^{-1}|\xi|\big)\sqrt{\mathcal{M}(t,k,\xi)}\Lambda_t^b(k,\xi)\\
		 \leq&C\big(|k|^{-1}+\nu^{\f12}|k|^{-1}|\xi|\big)e^{\f12\delta_0\nu^{\f13}t}\Lambda_t^b(k,\xi).
	 \end{align*}
	 For $|k|>\nu^{-\f12}$, we get by \eqref{Mm} that
	 \begin{align*}
		 \pa_{\xi}\big(\sqrt{\mathcal{M}(k,\xi)}\Lambda_t^b(k,\xi)\big)\leq C\nu^{\f13}|k|^{-\f13}e^{\f12\delta_0\nu^{\f13}t}\Lambda_t^b(k,\xi).
	 \end{align*}
	 Thus, for $(k,l)\in A_2$ and $s\in[0,1]$, by \eqref{eq:boundsM} and \eqref{eq:Mneq}, we have $kl>0$, $|l+s(k-l)|\geq\min(|k|,|l|)>\nu^{-\f12}$, $|l+s(k-l)|\geq|l|-|k-l|\geq|l|/2$, and
	 \begin{align*}
		 &\left|\sqrt{\mathcal{M}(t,k,\xi)}\Lambda_t^b(k,\xi)-\sqrt{\mathcal{M}(t,l,\eta)}\Lambda_t^b(l,\eta)\right|\\
		 \leq& Ce^{\f12\delta_0\nu^{\f13}t}\int_0^1\nu^{\f13}|k|^{-\f13}\Lambda_t^b(k,\eta+s(\xi-\eta))|\xi-\eta|ds\\
		 &+Ce^{\f12\delta_0\nu^{\f13}t}\int_0^1\f{1+\nu^{\f12}|\eta|}{|l+s(k-l)|}\Lambda_t^b(l+s(k-l),\eta)|k-l|ds\\
		 \leq&Ce^{\f12\delta_0\nu^{\f13}t}\big(\nu^{\f13}|k|^{-\f13}|\xi-\eta|+|l|^{-1}(1+\nu^{\f12}|\eta|)|k-l|\big)\big(\Lambda_t^b(l,\eta)+\Lambda_t^b(k-l,\xi-\eta)\big).
	 \end{align*}
	 Then we have
	 \begin{align*}
		 |\mathcal{J}_2|\leq&Ce^{\f12\delta_0\nu^{\f13}t}\sum_{(k,l)\in A_2}\iint\big(\nu^{\f13}|k|^{-\f13}|\xi-\eta|+|l|^{-1}(1+\nu^{\f12}|\eta|)|k-l|\big)\left|\hat{v}^1_{\neq}(k-l,\xi-\eta)\right|\\
		 &\times\big(\Lambda_t^b(l,\eta)+\Lambda_t^b(k-l,\xi-\eta)\big) \left|l\hat{f}(l,\eta)\right|\sqrt{\mathcal{M}(t,k,\xi)}\Lambda_t^b(k,\xi)\left|\hat{f}(k,\xi)\right|d\xi d\eta.
		 \end{align*}
	Since $v^1=-\pa_y(-\Delta)^{-1}w$, we have
	 \begin{align*}
		 \big(|k-l|+|\xi-\eta|\big)\left|\hat{v}^1(k-l,\xi-\eta)\right|\leq 2\left|\hat{w}(k-l,\xi-\eta)\right|.
	 \end{align*}
	 And for $(k,l)\in A_2$, we have that $|k-l|\leq|l|/2\leq|k|\leq3|l|/2$, and $\nu^{\f13}|k|^{-\f13}|l|\leq C\nu^{\f13}|k|^{\f13}|l|^{\f13}$, moreover, $\min(|k|,|l|)>\nu^{-\f12}$, $1<\nu^{\f13}|k|^{\f13}|l|^{\f13}$, and $\nu^{\f12}|\eta|\leq\nu^{\f23}|k|^{\f13}|\eta|$. Thus we get
		 \begin{align*}
		 |\mathcal{J}_2|\leq& Ce^{\f12\delta_0\nu^{\f13}t}\sum_{(k,l)\in A_2}\iint\big(\nu^{\f13}|k|^{\f13}|l|^{\f13}+\nu^{\f23}|k|^{\f13}|\eta|\big)\left|\hat{w}_{\neq}(k-l,\xi-\eta)\right|\\
		 &\times\big(\Lambda_t^b(l,\eta)+\Lambda_t^b(k-l,\xi-\eta)\big) \left|\hat{f}(l,\eta)\right|\sqrt{\mathcal{M}(t,k,\xi)}\Lambda_t^b(k,\xi)\left|\hat{f}(k,\xi)\right|d\xi d\eta\\
		 \leq& Ce^{\delta_0\nu^{\f13}t}\|\hat{w}_{\neq}\|_{L^1}\big(\nu^{\f13}\||D_x|^{\f13}\Lambda_t^bf\|_{L^2}^2+\nu^{\f23}\|D_y\Lambda_t^bf\|_{L^2}\||D_x|^{\f13}\Lambda_t^bf\|_{L^2}\big)\\
		 &+Ce^{\delta_0\nu^{\f13}t}\|\Lambda_t^bw_{\neq}\|_{L^2}\big(\nu^{\f13}\|\widehat{|D_x|^{\f13}f}\|_{L^1}\||D_x|^{\f13}\Lambda_t^bf\|_{L^2}+\nu^{\f23}\|\widehat{D_yf}\|_{L^1}\||D_x|^{\f13}\Lambda_t^bf\|_{L^2}\big).
		 \end{align*}
		 Then by the fact that for $g=w_{\neq}$ or $g=|D_x|^{\f13}f$ or $g=D_yf$, $\|\hat{g}\|_{L^1}\leq C\|\Lambda_t^bg\|_{L^2}$, we have
		 \begin{align*}
		 |\mathcal{J}_2|\leq&C\|\sqrt{\mathcal{M}_t}\Lambda_t^bw\|_{L^2}\big(\nu^{\f13}\||D_x|^{\f13}\sqrt{\mathcal{M}_t}\Lambda_t^bf\|_{L^2}^2+\nu^{\f23}\|D_y\Lambda_t^bf\|_{L^2}\||D_x|^{\f13}\sqrt{\mathcal{M}_t}\Lambda_t^bf\|_{L^2}\big).
	 \end{align*}

	 Thus, we know
	 \begin{align*}
		 |\mathcal{J}|\leq C\|\sqrt{\mathcal{M}_t}\Lambda_t^bw\|_{L^2}\big(\||D_x|^{\f13}\sqrt{\mathcal{M}_t}\Lambda_t^bf\|_{L^2}^2+\nu^{\f23}\|D_y\Lambda_t^bf\|_{L^2}\||D_x|^{\f13}\sqrt{\mathcal{M}_t}\Lambda_t^bf\|_{L^2}\big).
		 \end{align*}
\end{proof}
\section{Estimates of $\omega^{\mathrm{i}}$ and $\th^{\mathrm{i}}$}\label{sec-i}
In this section, we study $(\omega^{\mathrm{i}}, \th^{\mathrm{i}})$. The main idea is to show the equation of $\theta^{\mathrm{i}}$ allows us to propagate more derivatives in the $x$ direction. 
\begin{proposition}\label{prop: short-time-i}
    For $m>2$ and $m_1\geq1$, there exists $\ep_0>0$ small and independent of $\nu$ such that if $0<\nu\leq1$, $\|\omega_{\mathrm{in}}\|_{H^m}\leq \ep_0\nu^{\f13}$, and $\|\langle\pa_x\rangle^{m_1}\th_{\mathrm{in}}\|_{H^m}\leq\ep_0\nu^{\frac{2}{3}}$, then it holds for $T_0=\nu^{-\f16}$ and $t\in[0,T_0]$ that
    \begin{align*}
        \|\Lambda_t^m\omega^{\mathrm{i}}(t)\|_{L^2}+\nu^{-\f13}\|\langle \partial_x\rangle^{m_1}\Lambda_t^{m}\theta^{\mathrm{i}}\|_{L^2}\leq C\ep_0 \nu^{\f13}.
    \end{align*}
\end{proposition}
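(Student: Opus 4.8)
The plan is a continuity/bootstrap argument for the weighted energies
$$X(t)=\|\Lambda_t^m\omega^{\mathrm{i}}\|_{L^2},\qquad Y(t)=\|\langle\pa_x\rangle^{m_1}\Lambda_t^m\theta^{\mathrm{i}}\|_{L^2},$$
using the plain weight $\Lambda_t$ (the refined multiplier $\mathcal M$ is not needed on this short window). I will show that $E(t):=X(t)+\nu^{-\f13}Y(t)$ stays $\le C_0\ep_0\nu^{\f13}$ on $[0,T_0]$. Since $\Lambda_0^m$ is the $H^m$ weight, the hypotheses give $X(0)\le\ep_0\nu^{\f13}$ and $Y(0)=\|\langle\pa_x\rangle^{m_1}\theta_{\mathrm{in}}\|_{H^m}\le\ep_0\nu^{\f23}$, so $E(0)\le2\ep_0\nu^{\f13}$. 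Note that $u^{\mathrm{i},1}_0$ needs no independent estimate: the third line of \eqref{eq:i} is exactly the $P_0$-projection of the vorticity equation integrated once in $y$, so $\omega^{\mathrm{i}}_0=-\pa_yu^{\mathrm{i},1}_0$ is propagated and $u^{\mathrm{i},1}_0$ is controlled by $X$.

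Because $[\Lambda_t^m,\pa_t+y\pa_x]=0$ and $\langle\pa_x\rangle^{m_1}$ commutes with $\pa_t+y\pa_x$ and with $\Lambda_t^m$, I test the first two equations of \eqref{eq:i} against $\Lambda_t^m\omega^{\mathrm{i}}$ and $\langle\pa_x\rangle^{m_1}\Lambda_t^m\theta^{\mathrm{i}}$; the skew-adjoint $y\pa_x$ drops out, the time-dependence of the weight is absorbed by the commutation, and the dissipation produces nonpositive terms. Writing $\tilde\theta:=\langle\pa_x\rangle^{m_1}\theta^{\mathrm{i}}$ and commuting $\langle\pa_x\rangle^{m_1}$ past the $x$-independent factor $u^{\mathrm{i},1}_0$, this yields
\begin{align*}
\f12\f{d}{dt}X^2+\nu\|\nabla\Lambda_t^m\omega^{\mathrm{i}}\|_{L^2}^2&=-\langle\Lambda_t^m(u^{\mathrm{i}}\cdot\nabla\omega^{\mathrm{i}}),\Lambda_t^m\omega^{\mathrm{i}}\rangle+\langle\Lambda_t^m\pa_x\theta^{\mathrm{i}},\Lambda_t^m\omega^{\mathrm{i}}\rangle,\\
\f12\f{d}{dt}Y^2+\nu\|\nabla\Lambda_t^m\tilde\theta\|_{L^2}^2&=-\langle\Lambda_t^m(u^{\mathrm{i},1}_0\pa_x\tilde\theta),\Lambda_t^m\tilde\theta\rangle.
\end{align*}
I discard the two dissipation terms. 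For the vorticity transport I apply Lemma \ref{Lem: transport-shorttime} with $v=u^{\mathrm{i}}$, $w=f=\omega^{\mathrm{i}}$, $d=m$ and some $b\in(2,m]$, bounding it by $C(1+t)X^3$. The buoyancy coupling is controlled using $m_1\ge1$: $\|\Lambda_t^m\pa_x\theta^{\mathrm{i}}\|_{L^2}\le Y$, so that term is $\le YX$. For the temperature I use the zero-mode case of Lemma \ref{Lem: transport-shorttime} with $v=(u^{\mathrm{i},1}_0,0)$, $w=\omega^{\mathrm{i}}_0$, $f=\tilde\theta$, which carries \emph{no} $(1+t)$ factor and is bounded by $CXY^2$. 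After the standard reduction $\f{d}{dt}X^2=2X\dot X$ this gives $\dot X\lesssim(1+t)X^2+Y$ and $\dot Y\lesssim XY$.

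To close the bootstrap I first treat $Y$ alone: under $X\le C_0\ep_0\nu^{\f13}$, Gr\"onwall applied to $\dot Y\lesssim XY$ gives $Y(t)\le Y(0)\exp(CC_0\ep_0\nu^{\f13}t)$, and since $\nu^{\f13}t\le\nu^{\f13}T_0=\nu^{\f16}\le1$ the exponential is $\le2$ for $\ep_0$ small, hence $Y(t)\le2\ep_0\nu^{\f23}$ regardless of $C_0$. Inserting this genuine bound into the vorticity inequality and integrating over $[0,t]\subset[0,T_0]$, the nonlinear part contributes $C\int_0^t(1+s)X^2\,ds\lesssim C_0^2\ep_0^2\nu^{\f23}T_0^2=C_0^2\ep_0^2\nu^{\f13}$, while the buoyancy contributes $C\int_0^tY\,ds\le C\,T_0\cdot2\ep_0\nu^{\f23}=2C\ep_0\nu^{\f12}\le2C\ep_0\nu^{\f13}$. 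Therefore $X(t)\le\ep_0\nu^{\f13}(1+2C+CC_0^2\ep_0)$; fixing $C_0$ large (depending only on the absolute constants in the lemmas) and then taking $\ep_0$ small strictly improves $E(t)\le C_0\ep_0\nu^{\f13}$, which closes the continuity argument and yields the stated bound.

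I expect the main obstacle to be the buoyancy coupling $\pa_x\theta^{\mathrm{i}}$: it loses one $x$-derivative and feeds the temperature into the vorticity balance. Two ingredients are essential and specific to this short-time regime: propagating the extra $x$-regularity $\langle\pa_x\rangle^{m_1}$ on $\theta^{\mathrm{i}}$, which is possible only because the sole interaction in the $\theta^{\mathrm{i}}$ equation is with the $x$-independent zero mode $u^{\mathrm{i},1}_0$ (exact commutation with $\langle\pa_x\rangle^{m_1}$, no $(1+t)$ amplification), and the exponent bookkeeping showing that the accumulated buoyancy forcing is only $O(\nu^{\f12})$, smaller than the target $\nu^{\f13}$ by the factor $\nu^{\f16}$ furnished precisely by the window length $T_0=\nu^{-\f16}$ together with the $\nu^{\f13}$ size gap between $\theta^{\mathrm{i}}$ and $\omega^{\mathrm{i}}$.
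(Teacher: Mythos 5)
Your proposal is correct and follows essentially the same route as the paper's proof: energy estimates for $\|\Lambda_t^m\omega^{\mathrm{i}}\|_{L^2}$ and $\|\langle\pa_x\rangle^{m_1}\Lambda_t^m\theta^{\mathrm{i}}\|_{L^2}$ using the commutation $[\Lambda_t^m,\pa_t+y\pa_x]=0$, Lemma \ref{Lem: transport-shorttime} for the transport terms (crucially its $(1+t)$-free zero-mode case for $u_0^{\mathrm{i},1}\pa_x\langle\pa_x\rangle^{m_1}\theta^{\mathrm{i}}$), and a Gr\"onwall/bootstrap argument on $[0,\nu^{-\f16}]$. The only cosmetic difference is the handling of the buoyancy coupling: the paper absorbs $\langle\pa_x\Lambda_t^m\theta^{\mathrm{i}},\Lambda_t^m\omega^{\mathrm{i}}\rangle$ by Young's inequality with weights $K\nu^{-\f12}$ and $\nu^{\f12}$, whereas you bound it by $XY$ and integrate over the window, both exploiting the same $\nu^{\f16}$ gain from $T_0=\nu^{-\f16}$ and the $\nu^{\f13}$ size gap between $\theta^{\mathrm{i}}$ and $\omega^{\mathrm{i}}$.
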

\begin{proof}
Applying the operator $\Lambda_t^m$ to $\eqref{eq:i}_1$ and using the fact that $\Lambda_t^m$ commutes with $\pa_t+y\pa_x$, we have
\begin{equation}\label{idw^i}
    \pa_t\Lambda_t^m\omega^{\mathrm{i}}+y\pa_x\Lambda_t^m\omega^{\mathrm{i}}-\nu\Delta\Lambda_t^m\omega^{\mathrm{i}}+\Lambda_t^m\big(u^{\mathrm{i}}\cdot\nabla\omega^{\mathrm{i}}\big)=\pa_x\Lambda_t^m\theta^{\mathrm{i}}.
\end{equation}
Taking inner product of above equation with $\Lambda_t^m\omega^{\mathrm{i}}$, we have
\begin{align*}
    \frac{d}{dt}\|\Lambda_t^m\omega^{\mathrm{i}}\|_{L^2}^2+2\nu\|\nabla\Lambda_t^m\omega^{\mathrm{i}}\|_{L^2}^2+2\langle \Lambda_t^m\big(u^{\mathrm{i}}\cdot\nabla\omega^{\mathrm{i}}\big),\Lambda_t^m\omega^{\mathrm{i}}\rangle=2\langle \pa_x\Lambda_t^m\theta^{\mathrm{i}},\Lambda_t^m\omega^{\mathrm{i}}\rangle.
\end{align*}
By taking $b=d=m$, $v=u^{\mathrm{i}}$ and $w=f=\omega^{\mathrm{i}}$ in Lemma \ref{Lem: transport-shorttime}, we have 
\begin{align*}
    |\langle \Lambda_t^m\big(u^{\mathrm{i}}\cdot\nabla\omega^{\mathrm{i}}\big),\Lambda_t^m\omega^{\mathrm{i}}\rangle|\lesssim (1+t)\|\Lambda_t^m\omega^{\mathrm{i}}\|_{L^2}^3,
\end{align*}
and we also know
\begin{align*}
    |\langle \pa_x\Lambda_t^m\theta^{\mathrm{i}},\Lambda_t^m\omega^{\mathrm{i}}\rangle|\lesssim \|\pa_x\Lambda_t^m\theta^{\mathrm{i}}\|_{L^2}\|\Lambda_t^m\omega^{\mathrm{i}}\|_{L^2},
\end{align*}
then we have
\begin{equation}\label{eq:Lambda-omega}
\begin{aligned}
    \frac{d}{dt}\|\Lambda_t^m\omega^{\mathrm{i}}\|_{L^2}^2+2\nu\|\nabla\Lambda_t^m\omega^{\mathrm{i}}\|_{L^2}^2\leq C(1+t)\|\Lambda_t^m\omega^{\mathrm{i}}\|_{L^2}^3+K\nu^{-\f12}\|\pa_x\Lambda_t^m\theta^{\mathrm{i}}\|_{L^2}^2+\nu^{\f12}\|\Lambda_t^m\omega^{\mathrm{i}}\|_{L^2}^2.
\end{aligned}
\end{equation}
And applying the operator $\langle\pa_x\rangle^{m_1}\Lambda_t^m$ to $\eqref{eq:i}_2$ and using the fact that $\Lambda_t^m$ commutes with $\pa_t+y\pa_x$, we have
\begin{align*}
    \pa_t\langle\pa_x\rangle^{m_1}\Lambda_t^m\th^{\mathrm{i}}+y\pa_x\langle\pa_x\rangle^{m_1}\Lambda_t^m\th^{\mathrm{i}}-\nu\Delta\langle\pa_x\rangle^{m_1}\Lambda_t^m\th^{\mathrm{i}}+\Lambda_t^m\big(u^{\mathrm{i},1}_0\langle\pa_x\rangle^{m_1}\pa_x\th^{\mathrm{i}}\big)=0,
\end{align*}
taking inner product of above equation with $\langle\pa_x\rangle^{m_1}\Lambda_t^m\th^{\mathrm{i}}$, we have
\begin{align*}
    \frac{d}{dt}\|\langle\pa_x\rangle^{m_1}\Lambda_t^m\th^{\mathrm{i}}\|_{L^2}^2+2\nu\|\nabla \langle\pa_x\rangle^{m_1}\Lambda_t^m\th^{\mathrm{i}}\|_{L^2}^2=-\langle \Lambda_t^m\big(u^{\mathrm{i},1}_0\pa_x\langle\pa_x\rangle^{m_1}\th^{\mathrm{i}}\big), \Lambda_t^m\langle\pa_x\rangle^{m_1}\th^{\mathrm{i}}\rangle.
\end{align*}
Since $\th^{\mathrm{i}}=\th^{\mathrm{i}}_0+\th^{\mathrm{i}}_{\neq}$, for zero mode part, we directly have
\begin{align*}
    \|\th^{\mathrm{i}}_0\|_{H^m}\leq \|\th_{\mathrm{in}}\|_{H^m},
\end{align*}
by taking $b=d=m$, $w=\omega_0^{\mathrm{i}}$ and $f=\langle\pa_x\rangle^{m_1}\th^{\mathrm{i}}$ in Lemma \ref{Lem: transport-shorttime}, for the nonzero parts we have
\begin{align*}
    \left|\langle \Lambda_t^m\big(u^{\mathrm{i},1}_0\pa_x\langle\pa_x\rangle^{m_1}\th^{\mathrm{i}}_{\neq}\big), \Lambda_t^m\langle\pa_x\rangle^{m_1}\th^{\mathrm{i}}_{\neq}\rangle\right|\leq C\|\Lambda_t^m\omega_0^{\mathrm{i}}\|_{L^2}\|\Lambda_t^m\langle\pa_x\rangle^{m_1}\th^{\mathrm{i}}_{\neq}\|_{L^2}^2,
\end{align*}
combined with \eqref{eq:Lambda-omega}, and then the result follows by Gr\"onwall's inequality and the fact that $t\leq \nu^{-\f16}$.
\end{proof}
\begin{proposition}\label{prop: long-time i}
    For $m>2$, $m_1\geq 1$, there exists $\delta>0$ independent of $\nu$ such that if $0<\nu\leq 1$, $t\geq T_0=\nu^{-\f16}$, $\|\sqrt{\mathcal{M}_t}\Lambda_t^m\omega^{\mathrm{i}}(t)\|_{L^2}\leq\delta\nu^{\f13}$, and $\|\langle\pa_x\rangle^{m_1}\sqrt{\mathcal{M}_t}\Lambda_t^m\th^{\mathrm{i}}(t)\|_{L^2}\leq\delta\nu^{\f23}$, then it holds for a large constant $K$ independent of $\nu$ that
    \begin{align*}
        \frac{d}{dt}\left(\|\sqrt{\mathcal{M}_t}\Lambda_t^m\omega^{\mathrm{i}}(t)\|_{L^2}^2
        +K\nu^{-\f23}\|\langle\pa_x\rangle^{m_1}\sqrt{\mathcal{M}_t}\Lambda_t^m\th_{\neq}^{\mathrm{i}}(t)\|_{L^2}^2\right)+\mathcal{CK}(\omega^{\mathrm{i}})+K\nu^{-\f23}\mathcal{CK}(\theta^{\mathrm{i}})\leq 0,
    \end{align*}
    where
    \begin{align*}
        \mathcal{CK}(\omega^{\mathrm{i}})=&\frac{\delta_0}{2}\Big(\nu\left\|(D_x,D_y)\sqrt{\mathcal{M}_t}\Lambda_t^m\omega^{\mathrm{i}}\right\|_{L^2}^2+\nu^{\f13}\left\||D_x|^{\frac{1}{3}}\sqrt{\mathcal{M}_t}\Lambda_t^m\omega^{\mathrm{i}}\right\|_{L^2}^2\\
         &+\left\|\nabla\sqrt{\mathcal{M}_t}\Lambda_t^mu_{\neq}^{\mathrm{i}}\right\|_{L^2}^2+\left\|\sqrt{\mathcal{M}_t\Upsilon_t}\Lambda_t^m\omega^{\mathrm{i}}\right\|_{L^2}^2\Big),
    \end{align*}
    and
    \begin{align*}
        \mathcal{CK}(\th^{\mathrm{i}})=\frac{\delta_0}{2}\Big(\nu\left\|(D_x,D_y)\sqrt{\mathcal{M}_t}\Lambda_t^m\langle\partial_x\rangle^{m_1}\th^{\mathrm{i}}\right\|_{L^2}^2+\nu^{\f13}\left\||D_x|^{\frac{1}{3}}\sqrt{\mathcal{M}_t}\Lambda_t^m\langle\partial_x\rangle^{m_1}\th^{\mathrm{i}}\right\|_{L^2}^2\Big).
    \end{align*}
\end{proposition}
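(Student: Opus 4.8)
The plan is a $\mathcal{M}_t$-weighted energy estimate for the two equations in \eqref{eq:i}, which are coupled only through the buoyancy forcing $\pa_x\theta^{\mathrm{i}}$ and the zero-mode interaction. First I would observe that the temperature zero mode decouples: applying $P_0$ to $\eqref{eq:i}_2$ gives $\pa_t\theta_0^{\mathrm{i}}-\nu\pa_y^2\theta_0^{\mathrm{i}}=0$, so only $\theta_{\neq}^{\mathrm{i}}$, solving $\pa_t\theta_{\neq}^{\mathrm{i}}+y\pa_x\theta_{\neq}^{\mathrm{i}}+u_0^{\mathrm{i},1}\pa_x\theta_{\neq}^{\mathrm{i}}-\nu\Delta\theta_{\neq}^{\mathrm{i}}=0$, enters the functional. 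I then apply $\Lambda_t^m$ to $\eqref{eq:i}_1$ and $\langle\pa_x\rangle^{m_1}\Lambda_t^m$ to the $\theta_{\neq}^{\mathrm{i}}$ equation (both operators commute with $\pa_t+y\pa_x$ and with $\Delta$), and pair the results with $\mathcal{M}_t\Lambda_t^m\omega^{\mathrm{i}}$ and $\mathcal{M}_t\langle\pa_x\rangle^{m_1}\Lambda_t^m\theta_{\neq}^{\mathrm{i}}$. The computation from the ``basic properties'' subsection converts the transport--diffusion part into $\tfrac{d}{dt}\|\sqrt{\mathcal{M}_t}\,\cdot\,\|_{L^2}^2$ plus $\langle\big((k\pa_\xi-\pa_t)\mathcal{M}+2\nu(k^2+\xi^2)\mathcal{M}\big)\,\cdot\,,\cdot\rangle$, and the energy--dissipation estimate \eqref{energy-dissipation} bounds the latter below by $\mathcal{CK}(\omega^{\mathrm{i}})$ and by $\mathcal{CK}(\th^{\mathrm{i}})$ respectively (for the temperature one simply discards the nonnegative inviscid-damping and $\Upsilon_t$ contributions). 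Forming the $\omega^{\mathrm{i}}$-identity plus $K\nu^{-\f23}$ times the $\theta_{\neq}^{\mathrm{i}}$-identity produces exactly the stated functional, and what remains is to absorb the right-hand side---the nonlinear transport commutators and the buoyancy cross term $2\langle\pa_x\Lambda_t^m\theta^{\mathrm{i}},\mathcal{M}_t\Lambda_t^m\omega^{\mathrm{i}}\rangle$ (which, since $\pa_x\theta_0^{\mathrm{i}}=0$, involves only $\theta_{\neq}^{\mathrm{i}}$)---into $\mathcal{CK}(\omega^{\mathrm{i}})+K\nu^{-\f23}\mathcal{CK}(\th^{\mathrm{i}})$.

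The crucial and hardest step is the buoyancy coupling, which loses a full $x$-derivative. Here I would exploit three features at once: the enhanced dissipation $\nu^{\f13}\||D_x|^{\f13}\sqrt{\mathcal{M}_t}\Lambda_t^m\omega^{\mathrm{i}}\|_{L^2}^2$ in $\mathcal{CK}(\omega^{\mathrm{i}})$, the relative weight $\nu^{-\f23}$ on the temperature energy, and the extra $x$-regularity $\langle\pa_x\rangle^{m_1}$ with $m_1\geq1$. Working pointwise in frequency against the common weight $\mathcal{M}(t,k,\xi)\Lambda_t^{2m}(k,\xi)$ and splitting by Young's inequality, $|k|\,|\hat{\theta}^{\mathrm{i}}|\,|\hat{\omega}^{\mathrm{i}}|\leq\f{\epsilon}{2}\nu^{\f13}|k|^{\f23}|\hat{\omega}^{\mathrm{i}}|^2+\f{1}{2\epsilon}\nu^{-\f13}|k|^{\f43}|\hat{\theta}^{\mathrm{i}}|^2$, the first piece is absorbed into the enhanced dissipation of $\omega^{\mathrm{i}}$ once $\epsilon$ is a small multiple of $\delta_0$, while for the second I use $|k|^{\f43}\leq|k|^{\f23}\langle k\rangle^{2m_1}$ (which holds since $m_1\geq1$) to control it by $K\nu^{-\f23}$ times the term $\nu^{\f13}\||D_x|^{\f13}\langle\pa_x\rangle^{m_1}\sqrt{\mathcal{M}_t}\Lambda_t^m\theta^{\mathrm{i}}\|_{L^2}^2$ in $\mathcal{CK}(\th^{\mathrm{i}})$, provided $K$ is large (of size $\sim\delta_0^{-2}$). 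This is precisely why the statement demands a large $K$ and the enlarged regularity of $\theta^{\mathrm{i}}$; it is the point at which the whole scheme would break if either the $\nu^{-\f23}$ weight or the extra derivative were missing.

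For the genuinely nonlinear transport terms I would use $\dive\,u^{\mathrm{i}}=0$ to replace each $\langle\sqrt{\mathcal{M}_t}\Lambda_t^m(u^{\mathrm{i}}\cdot\nabla f),\sqrt{\mathcal{M}_t}\Lambda_t^m f\rangle$ by its commutator, the transport of the weighted unknown contributing nothing. Decomposing $u^{\mathrm{i}}\cdot\nabla=u_0^{\mathrm{i},1}\pa_x+u_{\neq}^{\mathrm{i},1}\pa_x+u^{\mathrm{i},2}\pa_y$, the three pieces in the vorticity equation are estimated by Lemma \ref{Lem:u_0 comm-long}, Lemma \ref{Lem:I31}, and Lemma \ref{Lem:I11} respectively (the last two being valid exactly because $t\geq\nu^{-\f16}$), and the zero-mode interaction $u_0^{\mathrm{i},1}\pa_x\theta_{\neq}^{\mathrm{i}}$ in the temperature equation is handled again by Lemma \ref{Lem:u_0 comm-long}. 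Each bound carries a prefactor $\|\sqrt{\mathcal{M}_t}\Lambda_t^m\omega^{\mathrm{i}}\|_{L^2}$ or $\|\Lambda_t^m\omega_0^{\mathrm{i}}\|_{L^2}$, which by the a priori hypotheses is $\leq\delta\nu^{\f13}$, multiplying products of the coercive quantities appearing in $\mathcal{CK}(\omega^{\mathrm{i}})$ and $\mathcal{CK}(\th^{\mathrm{i}})$ (after using \eqref{eq:M}, \eqref{eq:Mf}, \eqref{eq:Mneq} to trade $\|D_y\Lambda_t^m\,\cdot\,\|$ and $\nu^{\pm\f13}$ factors, and noting that the prefactor $\|\nabla\sqrt{\mathcal{M}_t}\Lambda_t^m u^{\mathrm{i},2}\|_{L^2}$ produced by Lemma \ref{Lem:I11} is itself one of the $\mathcal{CK}(\omega^{\mathrm{i}})$ terms). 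Young's inequality then converts each into a small multiple of $\mathcal{CK}$. With $K$ fixed from the buoyancy step, I would finally take $\delta$ small, depending on $K$ and $\delta_0$, so that the sum of all these contributions does not exceed $\f12\big(\mathcal{CK}(\omega^{\mathrm{i}})+K\nu^{-\f23}\mathcal{CK}(\th^{\mathrm{i}})\big)$, which gives the claimed differential inequality. The main obstacle throughout is the buoyancy term treated in the previous paragraph; once the enhanced-dissipation balance is secured, the transport terms are routine consequences of the commutator lemmas.
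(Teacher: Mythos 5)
Your proposal is correct and follows essentially the same route as the paper's proof: the same $\mathcal{M}_t$-weighted energy identity with coercivity supplied by \eqref{energy-dissipation}, the same splitting of the transport commutators into the $u_{\neq}^{\mathrm{i},2}\pa_y$, $u_0^{\mathrm{i},1}\pa_x$, $u_{\neq}^{\mathrm{i},1}\pa_x$ pieces estimated by Lemmas \ref{Lem:I11}, \ref{Lem:u_0 comm-long}, \ref{Lem:I31}, the same treatment of the temperature equation via Lemma \ref{Lem:u_0 comm-long}, and the same absorption of the buoyancy cross term into the enhanced dissipation of $\omega^{\mathrm{i}}$ plus $K\nu^{-\f23}\mathcal{CK}(\th^{\mathrm{i}})$ using $m_1\geq 1$ and $K$ large, with $\delta$ small handling the quadratic terms. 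Your frequency-pointwise Young inequality for the buoyancy term and the observation that $\theta_0^{\mathrm{i}}$ decouples merely make explicit what the paper compresses into its bound $|I^2|\leq C\|\pa_x\sqrt{\mathcal{M}_t}\Lambda_t^m\th^{\mathrm{i}}\|_{L^2}\|\sqrt{\mathcal{M}_t}\Lambda_t^m\omega_{\neq}^{\mathrm{i}}\|_{L^2}$ and its final choice of constants.
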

 \begin{proof}
     Taking inner product of $\eqref{idw^i}$ with $\mathcal{M}_t\Lambda_t^m\omega^{\mathrm{i}}$, due to the fact that $\mathcal{M}_t$ is self-adjoint and $y\pa_x$ is skew-adjoint, we get
     \begin{align*}
         2\mathrm{Re}\langle y\pa_x\omega^{\mathrm{i}},\mathcal{M}_t\omega^{\mathrm{i}}\rangle=\langle [\mathcal{M}_t,y\pa_x]\omega^{\mathrm{i}},\omega^{\mathrm{i}}\rangle=\langle k\pa_{\xi}\mathcal{M}(t,D)\omega^{\mathrm{i}},\omega^{\mathrm{i}}\rangle,\\
         2\mathrm{Re}\langle \pa_t\omega^{\mathrm{i}},\mathcal{M}_t\omega^{\mathrm{i}}\rangle=\pa_t\langle\omega^{\mathrm{i}},\mathcal{M}_t\omega^{\mathrm{i}}\rangle-\langle \omega^{\mathrm{i}},\pa_t\mathcal{M}(t,D)\omega^{\mathrm{i}}\rangle.
     \end{align*}
     Thus we infer that
     \begin{align*}
         &\f{d}{dt}\|\sqrt{\mathcal{M}_t}\Lambda_t^m\omega^{\mathrm{i}}\|_{L^2}^2+\langle (k\pa_{\xi}-\pa_t)\mathcal{M}(t,D)\Lambda_t^m\omega^{\mathrm{i}},\Lambda_t^m\omega^{\mathrm{i}}\rangle+2\nu\|(D_x,D_y)\sqrt{\mathcal{M}_t}\Lambda_t^m\omega^{\mathrm{i}}\|_{L^2}^2\\
         &+2\mathrm{Re}\langle \Lambda_t^m(u^{\mathrm{i}}\cdot\nabla\omega^{\mathrm{i}}),\mathcal{M}_t\Lambda_t^m\omega^{\mathrm{i}}\rangle=2\mathrm{Re}\langle \pa_x\Lambda_t^m\th^{\mathrm{i}},\mathcal{M}_t\Lambda_t^m\omega^{\mathrm{i}}\rangle.
     \end{align*}
     Then by taking $f=\Lambda_t^m\omega^{\mathrm{i}}$ in \eqref{energy-dissipation} and $u_{\neq}^{\mathrm{i},2}=\pa_x(-\Delta)^{-1}\omega_{\neq}^{\mathrm{i}}$, we have
     \begin{equation}\label{eq:equation}
     \begin{aligned}
         &\f{d}{dt}\|\sqrt{\mathcal{M}_t}\Lambda_t^m\omega^{\mathrm{i}}\|_{L^2}^2+\frac{\delta_0}{2}\Big(\nu\left\|(D_x,D_y)\sqrt{\mathcal{M}_t}\Lambda_t^m\omega^{\mathrm{i}}\right\|_{L^2}^2+\nu^{\f13}\left\||D_x|^{\frac{1}{3}}\sqrt{\mathcal{M}_t}\Lambda_t^m\omega^{\mathrm{i}}\right\|_{L^2}^2\\
         &+\left\|\nabla\sqrt{\mathcal{M}_t}\Lambda_t^mu_{\neq}^{\mathrm{i}}\right\|_{L^2}^2+\left\|\sqrt{\mathcal{M}_t\Upsilon_t}\Lambda_t^m\omega^{\mathrm{i}}\right\|_{L^2}^2\Big)\\
         \leq&-2\mathrm{Re}\langle \Lambda_t^m(u^{\mathrm{i}}\cdot\nabla\omega^{\mathrm{i}}),\mathcal{M}_t\Lambda_t^m\omega^{\mathrm{i}}\rangle+2\mathrm{Re}\langle \pa_x\Lambda_t^m\th^{\mathrm{i}},\mathcal{M}_t\Lambda_t^m\omega^{\mathrm{i}}\rangle=:-I^1+I^2.
     \end{aligned}
     \end{equation}
     Since $\sqrt{\mathcal{M}_t}$ is also self-adjoint and $\mathrm{div}u^{\mathrm{i}}=0$, we know
     \begin{align*}
         \langle \Lambda_t^m(u^{\mathrm{i}}\cdot\nabla\omega^{\mathrm{i}}),\mathcal{M}_t\Lambda_t^m\omega^{\mathrm{i}}\rangle=\big\langle \sqrt{\mathcal{M}_t}\Lambda_t^m(u^{\mathrm{i}}\cdot\nabla\omega^{\mathrm{i}})-u^{\mathrm{i}}\cdot\nabla\big(\sqrt{\mathcal{M}_t}\Lambda_t^m\omega^{\mathrm{i}}\big),\sqrt{\mathcal{M}_t}\Lambda_t^m\omega^{\mathrm{i}}\big\rangle.
     \end{align*}
     Since $u^{\mathrm{i}}=P_0u^{\mathrm{i}}+P_{\neq}u^{\mathrm{i}}=(u_0^{\mathrm{i},1},0)+(u_{\neq}^{\mathrm{i},1},u_{\neq}^{\mathrm{i},2})$, we can decompose $I^1=I_1^1+I_2^1+I_3^1$ with
     \begin{equation}
     \begin{aligned}
         I_1^1:=\big\langle \sqrt{\mathcal{M}_t}\Lambda_t^m(u_{\neq}^{\mathrm{i},2}\pa_y\omega^{\mathrm{i}})-u_{\neq}^{\mathrm{i},2}\pa_y\big(\sqrt{\mathcal{M}_t}\Lambda_t^m\omega^{\mathrm{i}}\big),\sqrt{\mathcal{M}_t}\Lambda_t^m\omega^{\mathrm{i}}\big\rangle,\\
         I_2^1:=\big\langle \sqrt{\mathcal{M}_t}\Lambda_t^m(u_{0}^{\mathrm{i},1}\pa_x\omega^{\mathrm{i}})-u_{0}^{\mathrm{i},1}\pa_x\big(\sqrt{\mathcal{M}_t}\Lambda_t^m\omega^{\mathrm{i}}\big),\sqrt{\mathcal{M}_t}\Lambda_t^m\omega^{\mathrm{i}}\big\rangle,\\
         I_3^1:=\big\langle \sqrt{\mathcal{M}_t}\Lambda_t^m(u_{\neq}^{\mathrm{i},1}\pa_x\omega^{\mathrm{i}})-u_{\neq}^{\mathrm{i},1}\pa_x\big(\sqrt{\mathcal{M}_t}\Lambda_t^m\omega^{\mathrm{i}}\big),\sqrt{\mathcal{M}_t}\Lambda_t^m\omega^{\mathrm{i}}\big\rangle.
         \end{aligned}
     \end{equation}
     Next, we begin to estimate $I^1$ term by term.
    By taking $w=f=g=\omega^{\mathrm{i}}$ in Lemma \ref{Lem:I11}, we can directly obtain 
         \begin{align*}
             |I_1^1|\leq C\|\nabla\sqrt{\mathcal{M}_t}\Lambda_t^mu_{\neq}^{\mathrm{i},2}\|_{L^2}\|\sqrt{\mathcal{M}_t}\Lambda_t^m\omega^{\mathrm{i}}\|_{L^2}\big(\nu^{-\f13}\|\sqrt{\Upsilon_t\mathcal{M}_t}\Lambda_t^m\omega^{\mathrm{i}}\|_{L^2}+\nu^{\f16}\|D_y\Lambda_t^m\omega^{\mathrm{i}}\|_{L^2}\big).
         \end{align*}
        By taking $w=f=g=\omega^{\mathrm{i}}$ in Lemma \ref{Lem:u_0 comm-long}, we get
             \begin{align*}
                 |I_2^1|\leq C\|\Lambda^m_t\omega^{\mathrm{i}}\|_{L^2}\||D_x|^{\f13}\sqrt{\mathcal{M}_t}\Lambda_t^m\omega^{\mathrm{i}}\|_{L^2}^2.
             \end{align*}
         By taking $w=\omega_{\neq}^{\mathrm{i}}$ and $f=\omega^{\mathrm{i}}$ in Lemma \ref{Lem:I31}, we know
         \begin{align*}
             |I_3^1|\leq C\|\sqrt{\mathcal{M}_t}\Lambda_t^m\omega_{\neq}^{\mathrm{i}}\|_{L^2}\big(\||D_x|^{\f13}\sqrt{\mathcal{M}_t}\Lambda_t^m\omega^{\mathrm{i}}\|_{L^2}^2+\nu^{\f23}\|D_y\Lambda_t^m\omega^{\mathrm{i}}\|_{L^2}\||D_x|^{\f13}\sqrt{\mathcal{M}_t}\Lambda_t^m\omega^{\mathrm{i}}\|_{L^2}\big).
             \end{align*}
     Next for $I^2$, by a direct calculation, we have
     \begin{align*}
         |I^2|\leq C\|\pa_x\sqrt{\mathcal{M}_t}\Lambda_t^m\th^{\mathrm{i}}\|_{L^2}\|\sqrt{\mathcal{M}_t}\Lambda_t^m\omega_{\neq}^{\mathrm{i}}\|_{L^2}.
     \end{align*}
     Summing up, we have
     \begin{align*}
         |I^1|+|I^2|\leq&C\|\sqrt{\mathcal{M}_t}\Lambda_t^m\omega^{\mathrm{i}}\|_{L^2}\Big(\nu^{-\f13}\|\nabla\sqrt{\mathcal{M}_t}\Lambda_t^mu_{\neq}^{\mathrm{i},2}\|_{L^2}^2+\nu^{-\f13}\|\sqrt{\Upsilon_t\mathcal{M}_t}\Lambda_t^m\omega^{\mathrm{i}}\|_{L^2}^2\\
         &+\nu^{\f23}\|D_y\Lambda_t^m\omega^{\mathrm{i}}\|_{L^2}^2+\||D_x|^{\f13}\sqrt{\mathcal{M}_t}\Lambda_t^m\omega^{\mathrm{i}}\|_{L^2}^2\Big)+C\|\pa_x\sqrt{\mathcal{M}_t}\Lambda_t^m\th^{\mathrm{i}}\|_{L^2}\|\sqrt{\mathcal{M}_t}\Lambda_t^m\omega_{\neq}^{\mathrm{i}}\|_{L^2}.
     \end{align*}
     Then, we infer from \eqref{eq:equation} that
     \begin{equation}\label{eq:equation-w^i}
     \begin{aligned}
         &\f{d}{dt}\|\sqrt{\mathcal{M}_t}\Lambda_t^m\omega^{\mathrm{i}}\|_{L^2}^2+\frac{\delta_0}{2}\Big(\nu\left\|(D_x,D_y)\sqrt{\mathcal{M}_t}\Lambda_t^m\omega^{\mathrm{i}}\right\|_{L^2}^2+\nu^{\f13}\left\||D_x|^{\frac{1}{3}}\sqrt{\mathcal{M}_t}\Lambda_t^m\omega^{\mathrm{i}}\right\|_{L^2}^2\\
         &+\left\|\nabla\sqrt{\mathcal{M}_t}\Lambda_t^mu_{\neq}^{\mathrm{i}}\right\|_{L^2}^2+\left\|\sqrt{\mathcal{M}_t\Upsilon_t}\Lambda_t^m\omega^{\mathrm{i}}\right\|_{L^2}^2\Big)\\
         \leq&C_{\omega}\|\sqrt{\mathcal{M}_t}\Lambda_t^m\omega^{\mathrm{i}}\|_{L^2}\Big(\nu^{-\f13}\|\nabla\sqrt{\mathcal{M}_t}\Lambda_t^mu_{\neq}^{\mathrm{i},2}\|_{L^2}^2+\nu^{-\f13}\|\sqrt{\Upsilon_t\mathcal{M}_t}\Lambda_t^m\omega^{\mathrm{i}}\|_{L^2}^2+\nu^{\f23}\|D_y\Lambda_t^m\omega^{\mathrm{i}}\|_{L^2}^2\\
         &+\||D_x|^{\f13}\sqrt{\mathcal{M}_t}\Lambda_t^m\omega^{\mathrm{i}}\|_{L^2}^2\Big)+C_{\omega}\|\sqrt{\mathcal{M}_t}\Lambda_t^m\omega_{\neq}^{\mathrm{i}}\|_{L^2}\|\pa_x\sqrt{\mathcal{M}_t}\Lambda_t^m\th^{\mathrm{i}}\|_{L^2}.
     \end{aligned}
     \end{equation}
     A direct calculation gives that 
     \begin{align*}
         &\f{d}{dt}\|\sqrt{\mathcal{M}_t}\Lambda_t^m\langle\partial_x\rangle^{m_1}\th^{\mathrm{i}}\|_{L^2}^2+\frac{\delta_0}{2}\Big(\nu\left\|(D_x,D_y)\sqrt{\mathcal{M}_t}\Lambda_t^m\langle\partial_x\rangle^{m_1}\th^{\mathrm{i}}\right\|_{L^2}^2+\nu^{\f13}\left\||D_x|^{\frac{1}{3}}\sqrt{\mathcal{M}_t}\Lambda_t^m\langle\partial_x\rangle^{m_1}\th^{\mathrm{i}}\right\|_{L^2}^2\Big)\\
         \leq&-2\mathrm{Re}\langle \Lambda_t^m(u^{\mathrm{i},1}_0\partial_x\langle\partial_x\rangle^{m_1}\th^{\mathrm{i}})-u^{\mathrm{i},1}_0\partial_x\Lambda_t^m(\langle\partial_x\rangle^{m_1}\th^{\mathrm{i}}),\mathcal{M}_t\Lambda_t^m\langle\partial_x\rangle^{m_1}\th^{\mathrm{i}}\rangle=:I_{\th}.
     \end{align*}
     By taking $w=\omega^{\mathrm{i}}$, $f=g=\langle\partial_x\rangle^{m_1}\th^{\mathrm{i}}$ in Lemma \ref{Lem:u_0 comm-long}, we directly have
     \begin{align*}
         |I_{\th}|\leq C_{\th}\|\Lambda_t^m\omega^{\mathrm{i}}\|_{L^2}\||D_x|^{\f13}\sqrt{\mathcal{M}_t}\Lambda_t^m\langle\partial_x\rangle^{m_1}\th^{\mathrm{i}}\|_{L^2}^2.
     \end{align*}
     Then, combining with \eqref{eq:equation-w^i}, we obtain the results by taking $C_{\th}\delta=C_{\omega}\delta=\f{\delta_0}{4}$ and $K$ sufficiently large.
 \end{proof}

We conclude this section by summarizing the estimates of $\omega^{\mathrm{i}},\theta^{\mathrm{i}}$. By the standard bootstrap argument, we have for $t\leq \nu^{-\f16}$
\begin{align*}
    \|\Lambda_t^m\omega^{\mathrm{i}}\|_{L^2}\leq C\epsilon_0\nu^{\f13},\quad \|\langle\partial_x\rangle\Lambda_t^m\theta^{\mathrm{i}}\|_{L^2}\leq C\epsilon_0\nu^{\f23},
\end{align*}
and for $t\geq \nu^{-\f16}$
\begin{align*}
    \|\sqrt{\mathcal{M}_t}\Lambda_t^m\omega^{\mathrm{i}}(t)\|_{L_t^{\infty}L^2}^2
      &  +K\nu^{-\f23}\|\langle\pa_x\rangle^{m_1}\sqrt{\mathcal{M}_t}\Lambda_t^m\th_{\neq}^{\mathrm{i}}(t)\|_{L_{t}^{\infty}L^2}^2\\
    &+\int_{\nu^{-\f16}}^{\infty}\mathcal{CK}(\omega^{\mathrm{i}})(s)+K\nu^{-\f23}\mathcal{CK}(\theta^{\mathrm{i}})(s)ds\leq C\epsilon_0\nu^{\f23}.
\end{align*}
The following remark gives the estimate of the forcing term from the nonlinear interaction of $u^{\mathrm{i}}_{\neq}$ and $\theta^{\mathrm{i}}$. 
\begin{remark}\label{force}
Let $(u^{\mathrm{i}}, \omega^{\mathrm{i}}, \theta^{\mathrm{i}})$ be solutions to \eqref{eq:i}. It holds that
    \begin{align*}
  &\|\langle \partial_x\rangle\Lambda_t^n(u_{\neq}^{\mathrm{i}}\cdot\nabla \th^{\mathrm{i}})\|_{L^2}
  \approx \|\langle \partial_x\rangle\Lambda_t^n(\nabla^{\bot}\Delta^{-1}\omega_{\neq}^{\mathrm{i}}\cdot\nabla \th^{\mathrm{i}})\|_{L^2}\\
  =&\|\langle \partial_x\rangle\Lambda_t^n(\nabla_{L}^{\bot}\Delta^{-1}\omega_{\neq}^{\mathrm{i}}\cdot\nabla_L \th^{\mathrm{i}})\|_{L^2}\\
  \lesssim& \|\langle \partial_x\rangle\Lambda_t^{n+1}\Delta^{-1}\omega_{\neq}^{\mathrm{i}}\|_{L^2}\|\Lambda_t^{b+1}\theta^{\mathrm{i}}\|_{L^2}
  +\|\Lambda_t^{b+1}\Delta^{-1}\omega_{\neq}^{\mathrm{i}}\|_{L^2}
  \|\langle \partial_x\rangle\Lambda_t^{n+1}\theta^{\mathrm{i}}\|_{L^2}\\
  \lesssim &\frac{1}{t^2+1}\|\Lambda_t^{n+3}\omega_{\neq}^{\mathrm{i}}\|_{L^2}\|\langle \partial_x\rangle\Lambda_t^{n+1}\theta^{\mathrm{i}}\|_{L^2}\lesssim \frac{\epsilon_0^2\nu}{t^2+1},
\end{align*}
for any $m-3\geq n>b>2$, where $\nabla_L=(\partial_x, \partial_y+t\partial_x)$. 
\end{remark}

 \section{Estimates of $\omega^{\mathrm{e}}$ and $\th^{\mathrm{e}}$}
 In this section, we estimate the errors, namely, $(\omega^{\mathrm{e}}, \th^{\mathrm{e}})$. For the short-time region, we have the following proposition. 
\begin{proposition}
For $m-3>n>2$, there exists $\delta>0$ small and independent of $\nu$ such that if $0<\nu\leq1$, $\|\Lambda_t^m\omega^{\mathrm{i}}\|_{L^2}\leq \delta\nu^{\f13}$ and $\|\langle\partial_x\rangle\Lambda_t^m\th^{\mathrm{i}}\|_{L^2}\leq \delta\nu^{\f23}$, for all $t\in [0,\nu^{-\f16}]$, then it holds for $0\leq t\leq T_0=\nu^{-\f16}$ that
\begin{align*}
    &\|\Lambda_t^n\omega^{\mathrm{e}}\|_{L^2}\leq \delta\nu^{\frac{2}{3}},\\
    &\nu^{\frac{1}{6}}\|\partial_x\Lambda_t^n\theta^{\mathrm{e}}(t) \|_{L^2}+\| \Lambda_t^n\theta^{\mathrm{e}}(t) \|_{L^2}\leq \delta \nu.
\end{align*}
\end{proposition}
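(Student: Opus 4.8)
The plan is to prove all three bounds simultaneously by a continuity/bootstrap argument in the short-time weighted norms, using crucially that $\omega^{\mathrm{e}}$ and $\theta^{\mathrm{e}}$ vanish at $t=0$. On the maximal subinterval $[0,T^*]\subseteq[0,T_0]$ on which $\|\Lambda_t^n\omega^{\mathrm{e}}\|_{L^2}\le\delta\nu^{2/3}$, $\|\Lambda_t^n\theta^{\mathrm{e}}\|_{L^2}\le\delta\nu$ and $\|\partial_x\Lambda_t^n\theta^{\mathrm{e}}\|_{L^2}\le\delta\nu^{5/6}$ hold, I would re-derive each bound with a gain of one power of $\delta$. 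Applying $\Lambda_t^n$ (and also $\partial_x\Lambda_t^n$ to the temperature equation) to \eqref{eq:e}, pairing in $L^2$ with the same weighted unknown (the viscous terms are non-negative and may be discarded), and using that $\Lambda_t^n$ commutes with $\partial_t+y\partial_x$ together with the cancellation $\langle v\cdot\nabla g,g\rangle=0$ for $\dive\,v=0$, every transport term reduces to a commutator. Throughout I use $1+t\lesssim\nu^{-1/6}$ on this interval, the input bounds $\|\Lambda_t^m\omega^{\mathrm{i}}\|_{L^2}\le\delta\nu^{1/3}$ and $\|\langle\partial_x\rangle\Lambda_t^m\theta^{\mathrm{i}}\|_{L^2}\le\delta\nu^{2/3}$ from Proposition \ref{prop: short-time-i}, the elliptic bound $\|\Lambda_t^n u^{\mathrm{e}}\|_{L^2}\lesssim\|\Lambda_t^n\omega^{\mathrm{e}}\|_{L^2}$, and the margin $n+3\le m$ so that every product/commutator estimate has a spare derivative. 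The key structural observation is that each forcing, quasi-linear and reaction contribution is quadratically small (a product of two factors, each carrying a power of $\delta$), so each estimate improves by a factor $\delta$; I would therefore close the temperature bounds first, using only the bootstrap hypothesis for $\omega^{\mathrm{e}}$, and then feed the improved temperature bound into the vorticity estimate.

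For the temperature I run one estimate for $\Lambda_t^n\theta^{\mathrm{e}}$ and one for $\partial_x\Lambda_t^n\theta^{\mathrm{e}}$. The forcing $u^{\mathrm{i}}_{\neq}\cdot\nabla\theta^{\mathrm{i}}$ is controlled by Remark \ref{force}, whose bound $\|\langle\partial_x\rangle\Lambda_t^n(u^{\mathrm{i}}_{\neq}\cdot\nabla\theta^{\mathrm{i}})\|_{L^2}\lesssim\delta^2\nu/(1+t^2)$ is time-integrable and hence contributes $O(\delta^2\nu)$ to both temperature norms. The quasi-linear transport $u^{\mathrm{i}}\cdot\nabla\theta^{\mathrm{e}}$ is handled by Lemma \ref{Lem: transport-shorttime} (for the weighted estimate I first commute $\partial_x$ through, applying the lemma to $f=\partial_x\theta^{\mathrm{e}}$ and bounding the residual $(\partial_x u^{\mathrm{i}}_{\neq})\cdot\nabla\theta^{\mathrm{e}}$ separately); since $(1+t)\|\Lambda_t^b\omega^{\mathrm{i}}\|_{L^2}\lesssim\delta$ over $[0,T_0]$, this produces only a harmless Grönwall factor $e^{C\delta}$, and the genuinely nonlinear $u^{\mathrm{e}}\cdot\nabla\theta^{\mathrm{e}}$ is of still higher order.

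The hardest term is the reaction $u^{\mathrm{e}}\cdot\nabla\theta^{\mathrm{i}}$, dominated by $u^{\mathrm{e},2}\partial_y\theta^{\mathrm{i}}$, for which the transport cancellation is unavailable and one must estimate directly by Cauchy--Schwarz and a product rule in $\Lambda_t^n$. Because $\partial_y$ costs a factor $1+t$ (as $|\xi|\le(1+t)\Lambda_t$) while $u^{\mathrm{e},2}$ is controlled by $\omega^{\mathrm{e}}$, the bound is of the form $(1+t)\|\Lambda_t^n\omega^{\mathrm{e}}\|_{L^2}\|\langle\partial_x\rangle\Lambda_t^{n+1}\theta^{\mathrm{i}}\|_{L^2}$, whose time integral picks up $\int_0^{T_0}(1+t)\,dt\sim T_0^2=\nu^{-1/3}$ and yields $O(\delta^2\nu)$. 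Here the $\langle\partial_x\rangle$-regularity of $\theta^{\mathrm{i}}$ supplied by the hypothesis $m_1\ge1$ is decisive: it lets the extra $\partial_x$ in the weighted estimate fall on $\theta^{\mathrm{i}}$ at no cost in size, so that the $\partial_x$-weighted reaction term stays the same order as the unweighted one. Consequently both temperature norms are bounded by $C\delta^2\nu$, which closes them at the stated sizes $\delta\nu$ and $\delta\nu^{5/6}$ (the latter with room to spare) once $\delta$ is small; this $\nu^{-1/6}$ margin between $\theta^{\mathrm{e}}$ and $\partial_x\theta^{\mathrm{e}}$ is exactly the price of propagating the additional $x$-derivative over the window $[0,T_0]$.

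Finally, for the vorticity the transport $u^{\mathrm{i}}\cdot\nabla\omega^{\mathrm{e}}$ again gives a Grönwall factor through Lemma \ref{Lem: transport-shorttime}, while the reaction $u^{\mathrm{e}}\cdot\nabla\omega^{\mathrm{i}}$ is estimated as above by $(1+t)\|\Lambda_t^n\omega^{\mathrm{e}}\|_{L^2}\|\Lambda_t^{n+1}\omega^{\mathrm{i}}\|_{L^2}$, contributing $\int_0^{T_0}(1+t)\,dt\cdot\delta\nu^{2/3}\cdot\delta\nu^{1/3}\sim\delta^2\nu^{2/3}$. The buoyancy forcing is treated by $|\langle\partial_x\Lambda_t^n\theta^{\mathrm{e}},\Lambda_t^n\omega^{\mathrm{e}}\rangle|\le\|\partial_x\Lambda_t^n\theta^{\mathrm{e}}\|_{L^2}\|\Lambda_t^n\omega^{\mathrm{e}}\|_{L^2}$, and now inserting the already-improved temperature bound $\|\partial_x\Lambda_t^n\theta^{\mathrm{e}}\|_{L^2}\lesssim\delta^2\nu$ gives $\int_0^{T_0}\|\partial_x\Lambda_t^n\theta^{\mathrm{e}}\|_{L^2}\,dt\lesssim T_0\,\delta^2\nu=\delta^2\nu^{5/6}$; using instead the loose bootstrap value here would only yield $\delta\nu^{2/3}$ with a fixed constant and fail to close, which is why the temperature is improved first. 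Both contributions are $\lesssim\delta^2\nu^{2/3}\le\delta\nu^{2/3}$, closing the vorticity bound. Since all three weighted norms are continuous in $t$ and vanish at $t=0$, the improved inequalities force $T^*=T_0$, giving the Proposition. The main obstacle, as indicated, is the reaction term $u^{\mathrm{e},2}\partial_y\theta^{\mathrm{i}}$ in the weighted temperature estimate, where losing the transport structure forces the $\int_0^{T_0}(1+t)\,dt\sim\nu^{-1/3}$ amplification that fixes the $\nu^{5/6}$ threshold for $\partial_x\theta^{\mathrm{e}}$.
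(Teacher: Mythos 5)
Your overall scaffolding (bootstrap from zero data, forcing controlled by Remark \ref{force}, reaction terms paying $(1+t)$ plus one derivative landing on the $\mathrm{i}$-part, and improving the temperature first so that the buoyancy term $\partial_x\Lambda_t^n\theta^{\mathrm{e}}$ closes the vorticity bound) agrees with the paper's proof. But there is a genuine gap at precisely the term this proposition is built around: the residual $(\partial_x u^{\mathrm{i}})\cdot\nabla\theta^{\mathrm{e}}$ in the $\partial_x$-weighted temperature estimate, specifically its component $(\partial_x u^{\mathrm{i},2})\,\partial_y\theta^{\mathrm{e}}$. You claim this can be "bounded separately" and yields "only a harmless Gr\"onwall factor $e^{C\delta}$". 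It cannot: the factor $\partial_y\theta^{\mathrm{e}}$ carries a $y$-derivative at the top regularity $n$, and neither bootstrap quantity $\|\Lambda_t^n\theta^{\mathrm{e}}\|_{L^2}$ nor $\|\partial_x\Lambda_t^n\theta^{\mathrm{e}}\|_{L^2}$ controls $\|D_y\Lambda_t^n\theta^{\mathrm{e}}\|_{L^2}$ pointwise in time (converting $\partial_y$ into $(1+t)\Lambda_t$ would require $\Lambda_t^{n+1}\theta^{\mathrm{e}}$, which you do not propagate). The only available control is the viscous dissipation $\nu\|\nabla\Lambda_t^n\theta^{\mathrm{e}}\|_{L^2}^2$ of the \emph{unweighted} estimate --- exactly the term you announce you will discard ("the viscous terms are non-negative and may be discarded"). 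The paper's proof keeps it: it bounds the residual by $C\nu^{-\frac13}\|\Lambda_t^{n+1}u^{\mathrm{i}}\|_{L^2}\|\partial_x\Lambda_t^n\theta^{\mathrm{e}}\|_{L^2}\bigl(\nu^{-\frac16}\nu^{\frac12}\|\nabla\Lambda_t^n\theta^{\mathrm{e}}\|_{L^2}\bigr)$, applies Cauchy--Schwarz to leave $\frac{1}{100}\nu^{-\frac13}\nu\|\nabla\Lambda_t^n\theta^{\mathrm{e}}\|_{L^2}^2$, and absorbs this into the unweighted dissipation after coupling the two temperature estimates with relative weight $\nu^{\frac13}$ --- which is exactly what the prefactor $\nu^{\frac16}$ in the statement encodes. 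The same mechanism is needed for $\partial_xu^{\mathrm{e}}\cdot\nabla\theta^{\mathrm{e}}$ in the weighted estimate.

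This gap also invalidates your quantitative conclusions. Balancing the residual transport term against the dissipation forces the $\nu^{\frac56}$ threshold: with $\|\nabla\Lambda_t^n\theta^{\mathrm{e}}\|_{L^2_tL^2}\lesssim\delta\nu^{\frac12}$ (all the unweighted dissipation can give) and $\|\Lambda_t^{n+1}u^{\mathrm{i},2}_{\neq}\|_{L^2}\lesssim\delta\nu^{\frac13}(1+t)^{-2}$ from Lemma \ref{Lem: ID}, the closure relation for $X=\sup_t\|\partial_x\Lambda_t^n\theta^{\mathrm{e}}\|_{L^2}$ reads $X^2\gtrsim \delta\nu^{\frac13}\cdot\delta\nu^{\frac12}\cdot X$, hence $X\gtrsim\delta^2\nu^{\frac56}$; your claimed improved bound $X\lesssim C\delta^2\nu$ is not achievable. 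Relatedly, your attribution of the $\nu^{\frac56}$ threshold to the reaction term $u^{\mathrm{e},2}\partial_y\theta^{\mathrm{i}}$ contradicts your own computation that this term contributes $O(\delta^2\nu)$ to both temperature norms: the threshold is fixed by the quasi-linear term just described, not by the reaction. The rest of your argument does survive repair: with the corrected improvement $\|\partial_x\Lambda_t^n\theta^{\mathrm{e}}\|_{L^2}\lesssim\delta^2\nu^{\frac56}$, your buoyancy integral in the vorticity step is $\lesssim T_0\,\delta^2\nu^{\frac56}=\delta^2\nu^{\frac23}$, so the vorticity bound still closes.
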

\begin{proof}
Applying $\Lambda_t^n$ to $\eqref{eq:e}_1$, since $\Lambda_t^n$ commutes with $\pa_t+y\pa_x$, after taking inner product with $\Lambda_t^n\omega^{\mathrm{e}}$, we have
\begin{align*}
    \f{d}{dt}\|\Lambda_t^n\omega^{\mathrm{e}}\|_{L^2}^2+2\nu\|\nabla\Lambda_t^n\omega^{\mathrm{e}}\|_{L^2}^2&+2\langle \Lambda_t^n(u^{\mathrm{i}}\cdot\nabla\omega^{\mathrm{e}}),\Lambda_t^n\omega^{\mathrm{e}}\rangle+2\langle \Lambda_t^n(u^{\mathrm{e}}\cdot\nabla\omega^{\mathrm{i}}),\Lambda_t^n\omega^{\mathrm{e}}\rangle\\
    &+2\langle \Lambda_t^n(u^{\mathrm{e}}\cdot\nabla\omega^{\mathrm{e}}),\Lambda_t^n\omega^{\mathrm{e}}\rangle=2\langle \Lambda_t^n\pa_x\th^{\mathrm{e}},\Lambda_t^n\omega^{\mathrm{e}}\rangle.
\end{align*}
By taking $w=\omega^{\mathrm{i}}$ and $f=\omega^{\mathrm{e}}$ in Lemma \ref{Lem: transport-shorttime}, we have
\begin{align*}
    2\left|\langle \Lambda_t^n(u^{\mathrm{i}}\cdot\nabla\omega^{\mathrm{e}}),\Lambda_t^n\omega^{\mathrm{e}}\rangle\right|\leq C(1+t)\|\Lambda_t^n\omega^{\mathrm{i}}\|_{L^2}\|\Lambda_t^n\omega^{\mathrm{e}}\|_{L^2}^2.
\end{align*}
By Plancherel's formula, we have
\begin{align*}
    2\left|\langle \Lambda_t^n(u^{\mathrm{e}}\cdot\nabla\omega^{\mathrm{i}}),\Lambda_t^n\omega^{\mathrm{e}}\rangle\right|\leq &C(1+t)\Big(\|\hat{u}^{\mathrm{e}}\|_{L^1}\|\Lambda_t^{n+1}\omega^{\mathrm{i}}\|_{L^2}\|\Lambda_t^n\omega^{\mathrm{e}}\|_{L^2}+\|\Lambda_t^nu^{\mathrm{e}}\|_{L^2}\|\Lambda_t\hat{\omega}^{\mathrm{i}}\|_{L^1}\|\Lambda_t^n\omega^{\mathrm{e}}\|_{L^2}\Big)\\
    \leq& C(1+t)\|\Lambda_t^{n+1}\omega^{\mathrm{i}}\|_{L^2}\|\Lambda_t^{n}\omega^{\mathrm{e}}\|_{L^2}^2.
\end{align*}
By taking $w=\omega^{\mathrm{e}}$ and $f=\omega^{\mathrm{e}}$ in Lemma \ref{Lem: transport-shorttime}, we have
\begin{align*}
    2\left|\langle \Lambda_t^n(u^{\mathrm{e}}\cdot\nabla\omega^{\mathrm{e}}),\Lambda_t^n\omega^{\mathrm{e}}\rangle\right|\leq C(1+t)\|\Lambda_t^n\omega^{\mathrm{e}}\|_{L^2}^3.
\end{align*}
Summing up, we know
\begin{equation}\label{eq:w^e-short}
\begin{aligned}
    &\f{d}{dt}\|\Lambda_t^n\omega^{\mathrm{e}}\|_{L^2}^2+2\nu\|\nabla\Lambda_t^n\omega^{\mathrm{e}}\|_{L^2}^2
    \\
    \leq& C(1+t)\big(\|\Lambda_t^n\omega^{\mathrm{e}}\|_{L^2}+\|\Lambda_t^{n+1}\omega^{\mathrm{i}}\|_{L^2}\big)\|\Lambda_t^n\omega^{\mathrm{e}}\|_{L^2}^2+C\|\Lambda_t^n\pa_x\th^{\mathrm{e}}\|_{L^2}\|\Lambda_t^n\omega^{\mathrm{e}}\|_{L^2}.
\end{aligned}
\end{equation}
Next, applying operator $\Lambda_t^n$ to $\eqref{eq:e}_2$, since $\Lambda_t^n$ commutes with $\pa_t+y\pa_x$, then after taking inner product with $\Lambda_t^n\th^{\mathrm{e}}$, we have
\begin{align*}
    &\f{d}{dt}\|\Lambda_t^n\th^{\mathrm{e}}\|_{L^2}^2+2\nu\|\nabla \Lambda_t^n\th^{\mathrm{e}}\|_{L^2}^2+2\langle \Lambda_t^n\big(u_{\neq}^{\mathrm{i}}\cdot\nabla\th^{\mathrm{i}}\big),\Lambda_t^n\th^{\mathrm{e}}\rangle +2\langle \Lambda_t^n\big(u^{\mathrm{i}}\cdot\nabla\th^{\mathrm{e}}\big),\Lambda_t^n\th^{\mathrm{e}}\rangle\\
    &+2\langle \Lambda_t^n\big(u^{\mathrm{e}}\cdot\nabla\th^{\mathrm{e}}\big),\Lambda_t^n\th^{\mathrm{e}}\rangle+2\langle \Lambda_t^n\big(u^{\mathrm{e}}\cdot\nabla\th^{\mathrm{i}}\big),\Lambda_t^n\th^{\mathrm{e}}\rangle=0.
\end{align*}
By Remark \ref{force}, we have
\begin{align*}
    \left|2\langle \Lambda_t^n\big(u_{\neq}^{\mathrm{i}}\cdot\nabla\th^{\mathrm{i}}\big),\Lambda_t^n\th^{\mathrm{e}}\rangle\right|\leq C\f{\ep_0^2\nu}{1+t^2}\|\Lambda_t^n\th^{\mathrm{e}}\|_{L^2}.
\end{align*}
And by taking $w=\omega^{\mathrm{i}}$, $f=\th^{\mathrm{e}}$ and $w=\omega^{\mathrm{e}}$, $f=\th^{\mathrm{e}}$ respectively in Lemma \ref{Lem: transport-shorttime}, we have 
\begin{align*}
    &\left|2\langle \Lambda_t^n\big(u^{\mathrm{i}}\cdot\nabla\th^{\mathrm{e}}\big),\Lambda_t^n\th^{\mathrm{e}}\rangle\right|+\left|2\langle \Lambda_t^n\big(u^{\mathrm{e}}\cdot\nabla\th^{\mathrm{e}}\big),\Lambda_t^n\th^{\mathrm{e}}\rangle\right|\\
    \leq& C(1+t)\big(\|\Lambda_t^n\omega^{\mathrm{i}}\|_{L^2}+\|\Lambda_t^n\omega^{\mathrm{e}}\|_{L^2}\big)\|\Lambda_t^n\th^{\mathrm{e}}\|_{L^2}^2.
\end{align*}
And by Plancherel's formula, we have
\begin{align*}
    \left|2\langle \Lambda_t^n\big(u^{\mathrm{e}}\cdot\nabla\th^{\mathrm{i}}\big),\Lambda_t^n\th^{\mathrm{e}}\rangle\right|\leq C(1+t)\|\Lambda_t^{n+1}\th^{\mathrm{i}}\|_{L^2}\|\Lambda_t^n\th^{\mathrm{e}}\|_{L^2}\|\Lambda_t^n\omega^{\mathrm{e}}\|_{L^2}.
\end{align*}
Summing up, we have
\begin{equation}\label{eq:theat^e}
    \begin{aligned}
        \f{d}{dt}\|\Lambda_t^n\th^{\mathrm{e}}\|_{L^2}^2&+2\nu\|\nabla \Lambda_t^n\th^{\mathrm{e}}\|_{L^2}^2
        \leq C(1+t)\big(\|\Lambda_t^n\omega^{\mathrm{i}}\|_{L^2}+\|\Lambda_t^n\omega^{\mathrm{e}}\|_{L^2}\big)\|\Lambda_t^n\th^{\mathrm{e}}\|_{L^2}^2\\
        &+C\f{\ep_0^2\nu}{1+t^2}\|\Lambda_t^n\th^{\mathrm{e}}\|_{L^2}+C(1+t)\|\Lambda_t^{n+1}\th^{\mathrm{i}}\|_{L^2}\|\Lambda_t^n\th^{\mathrm{e}}\|_{L^2}\|\Lambda_t^n\omega^{\mathrm{e}}\|_{L^2}.
    \end{aligned}
\end{equation}
Applying operator $\pa_x\Lambda_t^n$ to $\eqref{eq:e}_2$, since $\pa_x\Lambda_t^n$ commutes with $\pa_t+y\pa_x$, then after taking inner product with $\pa_x\Lambda_t^n\th^{\mathrm{e}}$, we have
\begin{align*}
    &\f{d}{dt}\|\pa_x\Lambda_t^n\th^{\mathrm{e}}\|_{L^2}^2+2\nu\|\nabla \pa_x\Lambda_t^n\th^{\mathrm{e}}\|_{L^2}^2+2\langle \pa_x\Lambda_t^n\big(u_{\neq}^{\mathrm{i}}\cdot\nabla\th^{\mathrm{i}}\big),\pa_x\Lambda_t^n\th^{\mathrm{e}}\rangle +2\langle \pa_x\Lambda_t^n\big(u^{\mathrm{i}}\cdot\nabla\th^{\mathrm{e}}\big),\pa_x\Lambda_t^n\th^{\mathrm{e}}\rangle\\
    &+2\langle \pa_x\Lambda_t^n\big(u^{\mathrm{e}}\cdot\nabla\th^{\mathrm{e}}\big),\pa_x\Lambda_t^n\th^{\mathrm{e}}\rangle+2\langle \pa_x\Lambda_t^n\big(u^{\mathrm{e}}\cdot\nabla\th^{\mathrm{i}}\big),\pa_x\Lambda_t^n\th^{\mathrm{e}}\rangle=0.
\end{align*}
By Remark \ref{force}, we have
\begin{align*}
    \left|2\langle \pa_x\Lambda_t^n\big(u_{\neq}^{\mathrm{i}}\cdot\nabla\th^{\mathrm{i}}\big),\pa_x\Lambda_t^n\th^{\mathrm{e}}\rangle\right|\leq C\f{\ep_0^2\nu}{t^2+1}\|\pa_x\Lambda_t^n\th^{\mathrm{e}}\|_{L^2}.
\end{align*}
    Next for the transport term, we have
    \begin{equation}\label{eq:u^i-theta^e}
    \begin{aligned}
        |\langle \partial_x\Lambda_t^n(u^{\mathrm{i}}\cdot\nabla \theta^{\mathrm{e}}), \partial_x\Lambda_t^n\theta^{\mathrm{e}} \rangle|
        \leq|\langle \Lambda_t^n(u^{\mathrm{i}}\cdot\nabla \partial_x\theta^{\mathrm{e}}), \partial_x\Lambda_t^n\theta^{\mathrm{e}} \rangle|
        +|\langle \Lambda_t^n(\partial_xu^{\mathrm{i}}\cdot\nabla \theta^{\mathrm{e}}), \partial_x\Lambda_t^n\theta^{\mathrm{e}} \rangle|.
    \end{aligned}
    \end{equation}
    The second term is the key term, 
    \begin{align*}
        |\langle \Lambda_t^n(\partial_xu^{\mathrm{i}}\cdot\nabla \theta^{\mathrm{e}}), \partial_x\Lambda_t^n\theta^{\mathrm{e}} \rangle|
        &\leq  C\|\Lambda_t^{n+1}u^{\mathrm{i}}\|_{L^2}(t+1)\|\Lambda_t^n\theta^{\mathrm{e}}\|_{L^2}\|\partial_x\Lambda_t^n\theta^{\mathrm{e}}\|_{L^2}\\
        &\quad+C\nu^{-\f13}\|\Lambda_t^{n+1}u^{\mathrm{i}}\|_{L^2}\|\partial_x\Lambda_t^n\theta^{\mathrm{e}}\|_{L^2}\big(\nu^{-\f16}\nu^{\f12}\|\nabla\Lambda_t^n\theta^{\mathrm{e}}\|_{L^2}\big)\\
        &\leq C\delta^3\nu^{\f13}\nu \nu^{\f56}+C\f{\delta^4\nu^{\f53}}{t^2}+\f{1}{100}\nu \nu^{-\f13}\|\nabla\Lambda_t^n\theta^{\mathrm{e}}\|_{L^2}^2.
    \end{align*}
    For the first term in \eqref{eq:u^i-theta^e}, by taking $w=\omega^{\mathrm{i}}$ and $f=\pa_x\th^{\mathrm{e}}$ in Lemma \ref{Lem: transport-shorttime}, we have
    \begin{align*}
        |\langle \Lambda_t^n(u^{\mathrm{i}}\cdot\nabla \partial_x\theta^{\mathrm{e}}), \partial_x\Lambda_t^n\theta^{\mathrm{e}} \rangle|\leq C(1+t)\|\Lambda_t^n\omega^{\mathrm{i}}\|\Lambda_t^n\pa_x\theta^{\mathrm{e}}\|_{L^2}^2.
    \end{align*}
    Similarly, by taking $w=\omega^{\mathrm{e}}$ and $f=\pa_x\th^{\mathrm{e}}$ in Lemma \ref{Lem: transport-shorttime}, we have
\begin{align*}
    &\left|\langle \pa_x\Lambda_t^n\big(u^{\mathrm{e}}\cdot\nabla\th^{\mathrm{e}}\big),\pa_x\Lambda_t^n\th^{\mathrm{e}}\rangle\right|\leq \left|\langle \Lambda_t^n\big(\pa_xu^{\mathrm{e}}\cdot\nabla\th^{\mathrm{e}}\big),\pa_x\Lambda_t^n\th^{\mathrm{e}}\rangle\right|+\left|\langle \Lambda_t^n\big(u^{\mathrm{e}}\cdot\nabla\pa_x\th^{\mathrm{e}}\big),\pa_x\Lambda_t^n\th^{\mathrm{e}}\rangle\right|\\
    \leq&C\big(\|\widehat{\pa_xu^{\mathrm{e}}}\|_{L^1}\|\Lambda_t^n\nabla\th^{\mathrm{e}}\|_{L^2}+\|\Lambda_t^n\pa_xu^{\mathrm{e}}\|_{L^2}\|\widehat{\nabla\th^{\mathrm{e}}}\|_{L^1}\big)\|\pa_x\Lambda_t^n\th^{\mathrm{e}}\|_{L^2}+C(1+t)\|\Lambda_t^n\omega^{\mathrm{e}}\|\Lambda_t^n\pa_x\theta^{\mathrm{e}}\|_{L^2}^2\\
    \leq&C\|\Lambda_t^n\omega^{\mathrm{e}}\|_{L^2}\|\Lambda_t^n\nabla\th^{\mathrm{e}}\|_{L^2}\|\pa_x\Lambda_t^n\th^{\mathrm{e}}\|_{L^2}+C(1+t)\|\Lambda_t^n\omega^{\mathrm{e}}\|\Lambda_t^n\pa_x\theta^{\mathrm{e}}\|_{L^2}^2\\
    \leq&C\nu^{-\f23}\|\Lambda_t^n\omega^{\mathrm{e}}\|_{L^2}^2\|\pa_x\Lambda_t^n\th^{\mathrm{e}}\|_{L^2}^2+\frac{1}{100}\nu^{-\f13}\nu\|\Lambda_t^n\nabla\th^{\mathrm{e}}\|_{L^2}^2+C(1+t)\|\Lambda_t^n\omega^{\mathrm{e}}\|\Lambda_t^n\pa_x\theta^{\mathrm{e}}\|_{L^2}^2.
\end{align*}
Finally, by Plancherel's formula and Young's convolution inequality, we have
\begin{align*}
    &\left|\langle \pa_x\Lambda_t^n\big(u^{\mathrm{e}}\cdot\nabla\th^{\mathrm{i}}\big),\pa_x\Lambda_t^n\th^{\mathrm{e}}\rangle\right|\leq \left|\langle \Lambda_t^n\big(\pa_xu^{\mathrm{e}}\cdot\nabla\th^{\mathrm{i}}\big),\pa_x\Lambda_t^n\th^{\mathrm{e}}\rangle\right|+\left|\langle \Lambda_t^n\big(u^{\mathrm{e}}\cdot\pa_x\nabla\th^{\mathrm{i}}\big),\pa_x\Lambda_t^n\th^{\mathrm{e}}\rangle\right|\\
    \leq& C\big(\|\pa_x\Lambda_t^nu^{\mathrm{e}}\|_{L^2}\|\widehat{\nabla\th^{\mathrm{i}}}\|_{L^1}+\|\widehat{\pa_xu^{\mathrm{e}}}\|_{L^1}\|\Lambda_t^n\nabla\th^{\mathrm{i}}\|_{L^2}\big)\|\pa_x\Lambda_t^n\th^{\mathrm{e}}\|_{L^2}\\
    &+C\big(\|\Lambda_t^nu^{\mathrm{e}}\|_{L^2}\|\widehat{\pa_x\nabla\th^{\mathrm{i}}}\|_{L^1}+\|\widehat{u^{\mathrm{e}}}\|_{L^1}\|\pa_x\Lambda_t^n\nabla\th^{\mathrm{i}}\|_{L^2}\big)\|\pa_x\Lambda_t^n\th^{\mathrm{e}}\|_{L^2}\\
    \leq& C\big(\|\Lambda_t^n\omega^{\mathrm{e}}\|_{L^2}\|\Lambda_t^n\nabla\th^{\mathrm{i}}\|_{L^2}+\|\Lambda_t^n\omega^{\mathrm{e}}\|_{L^2}\|\pa_x\Lambda_t^n\nabla\th^{\mathrm{i}}\|_{L^2}\big)\|\pa_x\Lambda_t^n\th^{\mathrm{e}}\|_{L^2}\\
    \leq & (1+t)\nu^{\f23}\|\Lambda_t^n\omega^{\mathrm{e}}\|_{L^2}\|\pa_x\Lambda_t^n\th^{\mathrm{e}}\|_{L^2}.
\end{align*}
Then summing up and combined with \eqref{eq:w^e-short}, \eqref{eq:theat^e}, we obtain the result. 
\end{proof}
Next, we consider the long-time region $t\geq \nu^{-\f16}$ and prove the following proposition. 
\begin{proposition}
    For $2<n<m-3$, there exists $\delta>0$ such that for $0<\nu\leq1$, if for $t\geq \nu^{-\f16}$, $\|\sqrt{\mathcal{M}_t}\Lambda_t^n\omega^{\mathrm{e}}\|_{L^2}\leq\delta\nu^{\f12}$ and $\|\langle\pa_x\rangle^{\f13}\sqrt{\mathcal{M}_t}\Lambda_t^n\th^{\mathrm{e}}\|_{L^2}\leq\delta\nu^{\f56}$ then it holds for $t\geq \nu^{-\frac{1}{6}}$ that
    \begin{align*}
        &\f{d}{dt}
        \Big(\|\Lambda_t^n\sqrt{\mathcal{M}_t}\omega^{\mathrm{e}}(t)\|_{L^2}^2+ K\nu^{-\f23}\|\langle \partial_x\rangle^{\frac{1}{3}} \sqrt{\mathcal{M}_t}\Lambda_t^n\theta^{\mathrm{e}}(t) \|_{L^2}^2\Big)
        +2\Big(\mathcal{CK}(\omega^{\mathrm{e}})
       +K\nu^{-\f23}\mathcal{CK}(\theta^{\mathrm{e}})\Big)\\
       &\leq \nu^{\f13}\mathcal{CK}(\omega^{\mathrm{i}})+K\nu^{-\f13}\mathcal{CK}(\theta^{\mathrm{i}})+\f{C\epsilon_0^2\delta\nu^{\f76}}{1+t^2},
    \end{align*}
    where
    \begin{align*}
        \mathcal{CK}(\omega^{\mathrm{e}})=\frac{\delta_0}{4}\Big(\nu\left\|(D_x,D_y)\sqrt{\mathcal{M}_t}\Lambda_t^n\omega^{\mathrm{e}}\right\|_{L^2}^2&+\nu^{\f13}\left\||D_x|^{\frac{1}{3}}\sqrt{\mathcal{M}_t}\Lambda_t^n\omega^{\mathrm{e}}\right\|_{L^2}^2\\
    &+\left\|\nabla\sqrt{\mathcal{M}_t}\Lambda_t^nu_{\neq}^{\mathrm{e},2}\right\|_{L^2}^2+\left\|\sqrt{\mathcal{M}_t\Upsilon_t}\Lambda_t^n\omega^{\mathrm{e}}\right\|_{L^2}^2\Big),
    \end{align*}
    and
    \begin{align*}
        \mathcal{CK}(\th^{\mathrm{e}})=&\frac{\delta_0}{4}\Big(\nu\left\|(D_x,D_y)\langle\pa_x\rangle^{\f13}\sqrt{\mathcal{M}_t}\Lambda_t^n\th^{\mathrm{e}}\right\|_{L^2}^2\\
        &\quad \quad\quad \quad\quad \quad
        +\|\langle\pa_x\rangle^{\f13}\sqrt{\Upsilon_t\mathcal{M}_t}\Lambda_t^n\th^{\mathrm{e}}\|_{L^2}^2+\nu^{\f13}\left\||D_x|^{\frac{1}{3}}\langle\pa_x\rangle^{\f13}\sqrt{\mathcal{M}_t}\Lambda_t^n\th^{\mathrm{e}}\right\|_{L^2}^2\Big).
    \end{align*}
\end{proposition}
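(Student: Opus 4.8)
The plan is to run a weighted energy estimate in close parallel with Proposition \ref{prop: long-time i}, the only structural novelty being that the temperature is now tracked in the $\langle\pa_x\rangle^{\f13}$-weighted norm dictated by the multiplier modification of Section \ref{sec:ql-method}. Concretely, I would apply $\Lambda_t^n$ to the vorticity equation of \eqref{eq:e} and take the $\mathcal{M}_t$-weighted inner product with $\Lambda_t^n\omega^{\mathrm{e}}$, and apply $\langle\pa_x\rangle^{\f13}\Lambda_t^n$ to the temperature equation of \eqref{eq:e} and pair with $\mathcal{M}_t\langle\pa_x\rangle^{\f13}\Lambda_t^n\th^{\mathrm{e}}$. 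Since $\langle\pa_x\rangle^{\f13}$ is a Fourier multiplier in $x$ alone, it commutes with $\pa_t+y\pa_x$, with $\Lambda_t$ and $\mathcal{M}_t$, and with multiplication by the $x$-independent field $u_0^{\mathrm{i},1}$. Feeding $f=\Lambda_t^n\omega^{\mathrm{e}}$ and $f=\langle\pa_x\rangle^{\f13}\Lambda_t^n\th^{\mathrm{e}}$ into the energy--dissipation estimate \eqref{energy-dissipation} produces exactly $\mathcal{CK}(\omega^{\mathrm{e}})$ and $\mathcal{CK}(\th^{\mathrm{e}})$ on the left; everything else must be absorbed into these or sent to the right-hand side, using throughout the bounds on $(\omega^{\mathrm{i}},\th^{\mathrm{i}})$ established in Section \ref{sec-i}.

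For the vorticity equation the nonlinearity splits as the quasi-linear transport $u^{\mathrm{i}}\cdot\nabla\omega^{\mathrm{e}}$, the reaction $u^{\mathrm{e}}\cdot\nabla\omega^{\mathrm{i}}$, the pure error $u^{\mathrm{e}}\cdot\nabla\omega^{\mathrm{e}}$, and the buoyancy forcing $\pa_x\th^{\mathrm{e}}$. The transport term is handled exactly as in Proposition \ref{prop: long-time i}: decompose $u^{\mathrm{i}}=(u_0^{\mathrm{i},1},0)+(u_{\neq}^{\mathrm{i},1},u_{\neq}^{\mathrm{i},2})$ and invoke Lemmas \ref{Lem:I11}, \ref{Lem:u_0 comm-long}, \ref{Lem:I31} with $w=\omega^{\mathrm{i}}$, $f=g=\omega^{\mathrm{e}}$, so that the prefactor $\|\sqrt{\mathcal{M}_t}\Lambda_t^m\omega^{\mathrm{i}}\|_{L^2}\lesssim\delta\nu^{\f13}$ times the $\nu^{-\f13}$ produced by the lemmas leaves an $O(\delta)$ multiple of $\mathcal{CK}(\omega^{\mathrm{e}})$; the pure error term is the same but gains an extra $\nu^{\f16}$ from $\|\sqrt{\mathcal{M}_t}\Lambda_t^n\omega^{\mathrm{e}}\|_{L^2}\lesssim\delta\nu^{\f12}$. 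The reaction $u^{\mathrm{e}}\cdot\nabla\omega^{\mathrm{i}}$ is estimated by the same bilinear lemmas read with $v=u^{\mathrm{e}}$, pairing $\|\nabla\sqrt{\mathcal{M}_t}\Lambda_t^n u_{\neq}^{\mathrm{e},2}\|_{L^2}\subset\mathcal{CK}(\omega^{\mathrm{e}})$ against the $\omega^{\mathrm{i}}$-dissipation placed into $\nu^{\f13}\mathcal{CK}(\omega^{\mathrm{i}})$ on the right. For the forcing I would exploit $\langle\pa_x\rangle^{\f13}\approx|D_x|^{\f13}$ on nonzero modes and distribute the $x$-derivative as $|D_x|^{\f13}$ on $\omega^{\mathrm{e}}$ and $|D_x|^{\f23}$ on $\th^{\mathrm{e}}$, bounding $|\langle\pa_x\Lambda_t^n\th^{\mathrm{e}},\mathcal{M}_t\Lambda_t^n\omega^{\mathrm{e}}\rangle|\lesssim\||D_x|^{\f13}\sqrt{\mathcal{M}_t}\Lambda_t^n\omega^{\mathrm{e}}\|_{L^2}\,\||D_x|^{\f13}\langle\pa_x\rangle^{\f13}\sqrt{\mathcal{M}_t}\Lambda_t^n\th^{\mathrm{e}}\|_{L^2}$; Young's inequality then splits this into the $\nu^{\f13}|D_x|^{\f13}$ enhanced dissipation of $\omega^{\mathrm{e}}$ and the $\nu^{\f13}|D_x|^{\f13}\langle\pa_x\rangle^{\f13}$ dissipation of $\th^{\mathrm{e}}$ carried by the factor $K\nu^{-\f23}$, which is precisely why the temperature is tracked with a $\f13$-derivative weight.

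For the temperature equation the forcing $u_{\neq}^{\mathrm{i}}\cdot\nabla\th^{\mathrm{i}}$ is controlled by Remark \ref{force}: since $\langle\pa_x\rangle^{\f13}\leq\langle\pa_x\rangle$ it contributes $\lesssim\f{\ep_0^2\nu}{1+t^2}\|\langle\pa_x\rangle^{\f13}\sqrt{\mathcal{M}_t}\Lambda_t^n\th^{\mathrm{e}}\|_{L^2}$, which after the $K\nu^{-\f23}$ weight and the bootstrap bound $\|\langle\pa_x\rangle^{\f13}\sqrt{\mathcal{M}_t}\Lambda_t^n\th^{\mathrm{e}}\|_{L^2}\lesssim\delta\nu^{\f56}$ yields exactly the inhomogeneous term $\f{C\ep_0^2\delta\nu^{\f76}}{1+t^2}$. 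The transport $u^{\mathrm{i}}\cdot\nabla\th^{\mathrm{e}}$ and the error term $u^{\mathrm{e}}\cdot\nabla\th^{\mathrm{e}}$ are absorbed into $\mathcal{CK}(\th^{\mathrm{e}})$ just as their vorticity analogues, using the $\delta\nu^{\f13}$ and $\delta\nu^{\f12}$ smallness. The decisive term is the reaction $u^{\mathrm{e}}\cdot\nabla\th^{\mathrm{i}}$, and within it $u^{\mathrm{e},2}\pa_y\th^{\mathrm{i}}$: after commuting $\langle\pa_x\rangle^{\f13}$, it behaves like the reaction model $\pa_t f_1\approx\f{|k|^{\f13}f_1}{k^2+(\eta-kt)^2}$ of Section \ref{sec:ql-method} and is exactly what forces the modified $\mathcal{M}_3$. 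I would estimate it via Lemma \ref{Lem:I11} with $v^2=u^{\mathrm{e},2}$, $f=\langle\pa_x\rangle^{\f13}\th^{\mathrm{i}}$, $g=\langle\pa_x\rangle^{\f13}\th^{\mathrm{e}}$, so its two pieces pair with $\|\nabla\sqrt{\mathcal{M}_t}\Lambda_t^n u_{\neq}^{\mathrm{e},2}\|_{L^2}\subset\mathcal{CK}(\omega^{\mathrm{e}})$, the $\Upsilon_t$-dissipation $\|\langle\pa_x\rangle^{\f13}\sqrt{\Upsilon_t\mathcal{M}_t}\Lambda_t^n\th^{\mathrm{e}}\|_{L^2}\subset\mathcal{CK}(\th^{\mathrm{e}})$, and the residual $\th^{\mathrm{i}}$-dissipation thrown into $K\nu^{-\f13}\mathcal{CK}(\th^{\mathrm{i}})$, with the gains $(1+t)^{-1}\leq\nu^{\f16}$ and $\|D_y\Lambda_t^n\th^{\mathrm{i}}\|_{L^2}\lesssim\nu^{-\f13}\|\sqrt{\mathcal{M}_t}\Lambda_t^{n+1}\th^{\mathrm{i}}\|_{L^2}$ (from \eqref{eq:Mf}, valid for $t\geq\nu^{-\f16}$) making the powers balance.

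The hard part will be this last reaction term $u^{\mathrm{e},2}\pa_y\th^{\mathrm{i}}$, where the $\f13$-derivative loss genuinely surfaces: one must check that the $\Upsilon_t$-coercivity built into $\mathcal{M}_3$ (with its $\mu\geq\f23$ extra $x$-derivatives) matches the $|D_x|^{\f13}$ growth, and, simultaneously, that the constant bookkeeping is consistent. Here $K$ must be taken large to absorb the buoyancy forcing $\pa_x\th^{\mathrm{e}}$ through the $K\nu^{-\f23}$-weighted dissipation of $\th^{\mathrm{e}}$, while $\delta$ must be taken small \emph{after} $K$ is fixed so that all quasi-linear and error contributions remain $O(\delta)$ multiples of the coercive terms; verifying that these two requirements do not collide, and that every reaction remainder lands in the correct $\nu$-weighted copy of $\mathcal{CK}(\omega^{\mathrm{i}})$ or $\mathcal{CK}(\th^{\mathrm{i}})$, is the technical heart of the estimate.
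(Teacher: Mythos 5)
Your overall strategy is the same as the paper's: the two weighted energy identities, the splitting of the nonlinearities into transport, reaction, self-interaction and forcing, the use of \eqref{energy-dissipation}, Lemmas \ref{Lem:I11}, \ref{Lem:u_0 comm-long}, \ref{Lem:I31} for the quasi-linear terms, Remark \ref{force} for $u^{\mathrm{i}}_{\neq}\cdot\nabla\th^{\mathrm{i}}$, the $|D_x|^{\f13}$ versus $|D_x|^{\f13}\langle\pa_x\rangle^{\f13}$ splitting of the buoyancy term, and the order ``fix $K$ large, then $\delta$ small.'' However, there is a genuine gap at exactly the term you call decisive, namely
\begin{equation*}
J_{42}=\big\langle \langle\pa_x\rangle^{\f13}\sqrt{\mathcal{M}_t}\Lambda_t^n(u^{\mathrm{e},2}\pa_y\th^{\mathrm{i}}),\ \langle\pa_x\rangle^{\f13}\sqrt{\mathcal{M}_t}\Lambda_t^n\th^{\mathrm{e}}\big\rangle .
\end{equation*}
Your proposed mechanism --- citing Lemma \ref{Lem:I11} with $v^2=u^{\mathrm{e},2}$, $f=\langle\pa_x\rangle^{\f13}\th^{\mathrm{i}}$, $g=\langle\pa_x\rangle^{\f13}\th^{\mathrm{e}}$ --- does not apply, for two reasons. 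First, $\langle\pa_x\rangle^{\f13}$ does not commute with multiplication by the $x$-dependent field $u^{\mathrm{e},2}$, so $J_{42}$ is \emph{not} of the form $\langle\sqrt{\mathcal{M}_t}\Lambda_t^n(v^2\pa_yf),\sqrt{\mathcal{M}_t}\Lambda_t^ng\rangle$ covered by the lemma; the discrepancy is the commutator $[\langle\pa_x\rangle^{\f13},u^{\mathrm{e},2}]\pa_y\th^{\mathrm{i}}$, which on the Fourier side carries $|\langle k\rangle^{\f13}-\langle k-l\rangle^{\f13}|\lesssim\langle l\rangle^{\f13}$, i.e.\ an extra third of an $x$-derivative falling on the velocity. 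Second, even after Cauchy--Schwarz this extra factor appears squared, as $\langle l\rangle^{\f23}$, in the integral that must be dominated by $\Upsilon$; the statement of Lemma \ref{Lem:I11} cannot absorb it, because its proof majorizes $\f{1}{|l|(1+|k-l|)}$ by $\Upsilon$ using only $1\leq|l|^{\mu}$, i.e.\ the lemma's conclusion has already thrown away the $|l|^{\mu}$ slack. This is precisely why the paper does not cite the lemma here but re-runs its proof with the weights in place (the terms $J_{421}$, $J_{422}$, $J_{4221}$, $J_{4222}$): the high-low piece $J_{4222}$ contains $\langle l\rangle^{\f23}/|l|$, dominated by $|l|^{\mu}/|l|$ exactly because $\mu\geq\f23$, which produces $\|\langle\pa_x\rangle^{\f13}\sqrt{\Upsilon_t\mathcal{M}_t}\Lambda_t^n\th^{\mathrm{e}}\|_{L^2}^2$, while the low-high piece $J_{421}$ is handled by Lemma \ref{Lem:L1} and $(1+t)^{-1}\leq\nu^{\f16}$. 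Your predicted final bound for $J_{42}$ is the correct one, but the weighted re-derivation that produces it is the actual technical content of this proposition, and your proposal defers it as ``one must check.''

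Two smaller inaccuracies in the same spirit: for the vorticity reaction $u^{\mathrm{e}}\cdot\nabla\omega^{\mathrm{i}}$ you invoke ``the same bilinear lemmas read with $v=u^{\mathrm{e}}$,'' but Lemma \ref{Lem:I31} is a commutator estimate requiring the same function in both slots, so it does not cover $u^{\mathrm{e},1}_{\neq}\pa_x\omega^{\mathrm{i}}$ tested against $\omega^{\mathrm{e}}$; the paper estimates the pieces $I_{22}$, $I_{23}$ by direct Plancherel/Young bounds instead (easily repaired). Likewise, the transport piece $u^{\mathrm{i},2}\pa_y\th^{\mathrm{e}}$ (the paper's $J_{22}$) is not absorbed ``just as its vorticity analogue'': the paper controls it through the $L^1$-Fourier inviscid-damping bound on $u^{\mathrm{i},2}$, paying $\nu^{-\f16}$ factors against the $\nu$-dissipation of $D_y\th^{\mathrm{e}}$, which is where the restriction $t\geq\nu^{-\f16}$ enters for this term.
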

\begin{proof}
Applying $\Lambda_t^n$ to $\eqref{eq:e}_1$, and taking inner product with $\mathcal{M}_t\Lambda_t^n\omega^{\mathrm{e}}$, we have
\begin{align*}
    &\f{d}{dt}\|\sqrt{\mathcal{M}_t}\Lambda_t^n\omega^{\mathrm{e}}\|_{L^2}^2+2\nu\|(D_x,D_y)\sqrt{\mathcal{M}_t}\Lambda_t^n\omega^{\mathrm{e}}\|_{L^2}^2+\langle (k\pa_{\xi}-\pa_t)\mathcal{M}(t,D)\Lambda_t^n\omega^{\mathrm
    {e}},\Lambda_t^n\omega^{\mathrm{e}}\rangle\\
    &+2\langle \Lambda_t^n(u^{\mathrm{i}}\cdot\nabla\omega^{\mathrm{e}}),\mathcal{M}_t\Lambda_t^n\omega^{\mathrm{e}}\rangle+2\langle \Lambda_t^n(u^{\mathrm{e}}\cdot\nabla\omega^{\mathrm{i}}),\mathcal{M}_t\Lambda_t^n\omega^{\mathrm{e}}\rangle+2\langle \Lambda_t^n(u^{\mathrm{e}}\cdot\nabla\omega^{\mathrm{e}}),\mathcal{M}_t\Lambda_t^n\omega^{\mathrm{e}}\rangle\\
    =&2\langle \Lambda_t^n\pa_x\th^{\mathrm{e}},\mathcal{M}_t\Lambda_t^n\omega^{\mathrm{e}}\rangle.
\end{align*}
Then by taking $f=\Lambda_t^n\omega^{\mathrm{e}}$ in \eqref{energy-dissipation} and using the fact that $u_{\neq}^{\mathrm{e},2}=\pa_x(-\Delta)^{-1}\omega_{\neq}^{\mathrm{e}}$, we have
\begin{align*}
    &\f{d}{dt}\|\sqrt{\mathcal{M}_t}\Lambda_t^n\omega^{\mathrm{e}}\|_{L^2}^2+\frac{\delta_0}{2}\Big(\nu\left\|(D_x,D_y)\sqrt{\mathcal{M}_t}\Lambda_t^n\omega^{\mathrm{e}}\right\|_{L^2}^2+\nu^{\f13}\left\||D_x|^{\frac{1}{3}}\sqrt{\mathcal{M}_t}\Lambda_t^n\omega^{\mathrm{e}}\right\|_{L^2}^2\\
    &+\left\|\nabla\sqrt{\mathcal{M}_t}\Lambda_t^nu_{\neq}^{\mathrm{e},2}\right\|_{L^2}^2+\left\|\sqrt{\mathcal{M}_t\Upsilon_t}\Lambda_t^n\omega^{\mathrm{e}}\right\|_{L^2}^2\Big)\\
    \leq&-2\langle \Lambda_t^n(u^{\mathrm{i}}\cdot\nabla\omega^{\mathrm{e}}),\mathcal{M}_t\Lambda_t^n\omega^{\mathrm{e}}\rangle-2\langle \Lambda_t^n(u^{\mathrm{e}}\cdot\nabla\omega^{\mathrm{i}}),\mathcal{M}_t\Lambda_t^n\omega^{\mathrm{e}}\rangle-2\langle \Lambda_t^n(u^{\mathrm{e}}\cdot\nabla\omega^{\mathrm{e}}),\mathcal{M}_t\Lambda_t^n\omega^{\mathrm{e}}\rangle\\
    &+2\langle \Lambda_t^n\pa_x\th^{\mathrm{e}},\mathcal{M}_t\Lambda_t^n\omega^{\mathrm{e}}\rangle=:-I_1-I_2-I_3+I_4.
\end{align*}
Since $\mathrm{div}u^{\mathrm{i}}=\mathrm{div}u^{\mathrm{e}}=0$, we know
\begin{align*}
    \langle u^{\mathrm{i}}\cdot\nabla(\sqrt{\mathcal{M}_t}\Lambda_t^n\omega^{\mathrm{e}}),\sqrt{\mathcal{M}_t}\Lambda_t^n\omega^{\mathrm{e}}\rangle=\langle u^{\mathrm{e}}\cdot\nabla(\sqrt{\mathcal{M}_t}\Lambda_t^n\omega^{\mathrm{e}}),\sqrt{\mathcal{M}_t}\Lambda_t^n\omega^{\mathrm{e}}\rangle=0,
\end{align*}
then for $I_1$ and $I_3$, we have
\begin{align*}
    |I_1|=2\left|\langle \sqrt{\mathcal{M}_t}\Lambda_t^n(u^{\mathrm{i}}\cdot\nabla\omega^{\mathrm{e}}),\sqrt{\mathcal{M}_t}\Lambda_t^n\omega^{\mathrm{e}}\rangle-\langle u^{\mathrm{i}}\cdot\nabla(\sqrt{\mathcal{M}_t}\Lambda_t^n\omega^{\mathrm{e}}),\sqrt{\mathcal{M}_t}\Lambda_t^n\omega^{\mathrm{e}}\rangle\right|,\\
    |I_3|=2\left|\langle \sqrt{\mathcal{M}_t}\Lambda_t^n(u^{\mathrm{e}}\cdot\nabla\omega^{\mathrm{e}}),\sqrt{\mathcal{M}_t}\Lambda_t^n\omega^{\mathrm{e}}\rangle-\langle u^{\mathrm{e}}\cdot\nabla(\sqrt{\mathcal{M}_t}\Lambda_t^n\omega^{\mathrm{e}}),\sqrt{\mathcal{M}_t}\Lambda_t^n\omega^{\mathrm{e}}\rangle\right|.
\end{align*}
And we can decompose $I_1=2(I_{11}+I_{12}+I_{13})$ as 
\begin{align*}
    I_{11}:=\big\langle \sqrt{\mathcal{M}_t}\Lambda_t^n(u_{\neq}^{\mathrm{i},2}\pa_y\omega^{\mathrm{e}})-u_{\neq}^{\mathrm{i},2}\pa_y\big(\sqrt{\mathcal{M}_t}\Lambda_t^n\omega^{\mathrm{e}}\big),\sqrt{\mathcal{M}_t}\Lambda_t^n\omega^{\mathrm{e}}\big\rangle,\\
         I_{12}:=\big\langle \sqrt{\mathcal{M}_t}\Lambda_t^n(u_{0}^{\mathrm{i},1}\pa_x\omega^{\mathrm{e}})-u_{0}^{\mathrm{i},1}\pa_x\big(\sqrt{\mathcal{M}_t}\Lambda_t^n\omega^{\mathrm{e}}\big),\sqrt{\mathcal{M}_t}\Lambda_t^n\omega^{\mathrm{e}}\big\rangle,\\
         I_{13}:=\big\langle \sqrt{\mathcal{M}_t}\Lambda_t^n(u_{\neq}^{\mathrm{i},1}\pa_x\omega^{\mathrm{e}})-u_{\neq}^{\mathrm{i},1}\pa_x\big(\sqrt{\mathcal{M}_t}\Lambda_t^n\omega^{\mathrm{e}}\big),\sqrt{\mathcal{M}_t}\Lambda_t^n\omega^{\mathrm{e}}\big\rangle.
\end{align*}
Then by taking $v=\omega^{\mathrm{i}}$ and $f=g=\omega^{\mathrm{e}}$ in Lemma \ref{Lem:I11}, \ref{Lem:u_0 comm-long}, and \ref{Lem:I31}, we have
\begin{align*}
    &|I_1|\leq C\|\nabla\sqrt{\mathcal{M}_t}\Lambda_t^nu_{\neq}^{\mathrm{i},2}\|_{L^2}\big(\nu^{-\f13}\|\sqrt{\mathcal{M}_t}\Lambda_t^n\omega^{\mathrm{e}}\|_{L^2}\|\sqrt{\Upsilon_t\mathcal{M}_t}\Lambda_t^n\omega^{\mathrm{e}}\|_{L^2}\\
    &+\nu^{\f16}\|D_y\Lambda_t^n\omega^{\mathrm{e}}\|_{L^2}\|\sqrt{\mathcal{M}_t}\Lambda_t^n\omega^{\mathrm{e}}\|_{L^2}\big)+C\|\Lambda_t^n\omega^{\mathrm{i}}\|_{L^2}\||D_x|^{\f13}\sqrt{\mathcal{M}_t}\Lambda_t^n\omega^{\mathrm{e}}\|_{L^2}\||D_x|^{\f13}\sqrt{\mathcal{M}_t}\Lambda_t^n\omega^{\mathrm{e}}\|_{L^2}\\
    &+C\|\sqrt{\mathcal{M}_t}\Lambda_t^n\omega^{\mathrm{i}}\|_{L^2}\big(\||D_x|^{\f13}\sqrt{\mathcal{M}_t}\Lambda_t^n\omega^{\mathrm{e}}\|_{L^2}^2+\nu^{\f23}\|D_y\Lambda_t^n\omega^{\mathrm{e}}\|_{L^2}\||D_x|^{\f13}\sqrt{\mathcal{M}_t}\Lambda_t^n\omega^{\mathrm{e}}\|_{L^2}\big).
\end{align*}
Similarly, we get
\begin{align*}
    &|I_3|\leq C\|\nabla\sqrt{\mathcal{M}_t}\Lambda_t^nu_{\neq}^{\mathrm{e},2}\|_{L^2}\big(\nu^{-\f13}\|\sqrt{\mathcal{M}_t}\Lambda_t^n\omega^{\mathrm{e}}\|_{L^2}\|\sqrt{\Upsilon_t\mathcal{M}_t}\Lambda_t^n\omega^{\mathrm{e}}\|_{L^2}\\
    &+\nu^{\f16}\|D_y\Lambda_t^n\omega^{\mathrm{e}}\|_{L^2}\|\sqrt{\mathcal{M}_t}\Lambda_t^n\omega^{\mathrm{e}}\|_{L^2}\big)+C\|\Lambda_t^n\omega^{\mathrm{e}}\|_{L^2}\||D_x|^{\f13}\sqrt{\mathcal{M}_t}\Lambda_t^n\omega^{\mathrm{e}}\|_{L^2}\||D_x|^{\f13}\sqrt{\mathcal{M}_t}\Lambda_t^n\omega^{\mathrm{e}}\|_{L^2}\\
    &+C\|\sqrt{\mathcal{M}_t}\Lambda_t^n\omega^{\mathrm{e}}\|_{L^2}\big(\||D_x|^{\f13}\sqrt{\mathcal{M}_t}\Lambda_t^n\omega^{\mathrm{e}}\|_{L^2}^2+\nu^{\f23}\|D_y\Lambda_t^n\omega^{\mathrm{e}}\|_{L^2}\||D_x|^{\f13}\sqrt{\mathcal{M}_t}\Lambda_t^n\omega^{\mathrm{e}}\|_{L^2}\big).
\end{align*}
Next, for $I_2$, we decompose it as $I_2=2(I_{21}+I_{22}+I_{23})$ with
\begin{align*}
    I_{21}:=\big\langle \sqrt{\mathcal{M}_t}\Lambda_t^n(u_{\neq}^{\mathrm{e},2}\pa_y\omega^{\mathrm{i}}),\sqrt{\mathcal{M}_t}\Lambda_t^n\omega^{\mathrm{e}}\big\rangle,\\
         I_{22}:=\big\langle \sqrt{\mathcal{M}_t}\Lambda_t^n(u_{0}^{\mathrm{e},1}\pa_x\omega^{\mathrm{i}}),\sqrt{\mathcal{M}_t}\Lambda_t^n\omega^{\mathrm{e}}\big\rangle,\\
         I_{23}:=\big\langle \sqrt{\mathcal{M}_t}\Lambda_t^n(u_{\neq}^{\mathrm{e},1}\pa_x\omega^{\mathrm{i}}),\sqrt{\mathcal{M}_t}\Lambda_t^n\omega^{\mathrm{e}}\big\rangle.
\end{align*}
For $I_{21}$, by taking $w=\omega^{\mathrm{e}}_{\neq}$, $f=\omega^{\mathrm{i}}$, and $g=\omega^{\mathrm{e}}$ in Lemma \ref{Lem:I11}, we have
\begin{align*}
    |I_{21}|\leq C\|\nabla\sqrt{\mathcal{M}_t}\Lambda_t^nu_{\neq}^{\mathrm{e},2}\|_{L^2}\big(\nu^{-\f13}\|\sqrt{\mathcal{M}_t}\Lambda_t^n\omega^{\mathrm{i}}\|_{L^2}\|\sqrt{\Upsilon_t\mathcal{M}_t}\Lambda_t^n\omega^{\mathrm{e}}\|_{L^2}\\
    +\nu^{\f16}\|D_y\Lambda_t^n\omega^{\mathrm{i}}\|_{L^2}\|\sqrt{\mathcal{M}_t}\Lambda_t^n\omega^{\mathrm{e}}\|_{L^2}\big).
\end{align*}
And by Plancherel's formula and Young's convolution inequality, we know
\begin{align*}
    |I_{22}|\leq C\|\sqrt{\mathcal{M}_t}\Lambda_t^n\pa_x\omega^{\mathrm{i}}\|_{L^2}\|\sqrt{\mathcal{M}_t}\Lambda_t^nu^{\mathrm{e},1}_0\|_{L^2}\|\sqrt{\mathcal{M}_t}\Lambda_t^n\omega^{\mathrm{e}}_{\neq}\|_{L^2},
\end{align*}
and
\begin{align*}
    |I_{23}|&\leq C\big(\|\widehat{u_{\neq}^{\mathrm{e},1}}\|_{L^1}\|\sqrt{\mathcal{M}_t}\Lambda_t^n\pa_x\omega^{\mathrm{i}}\|_{L^2}+C\|\widehat{\pa_x\omega^{\mathrm{i}}}\|_{L^1}\|\sqrt{\mathcal{M}_t}\Lambda_t^nu_{\neq}^{\mathrm{e},1}\|_{L^2}\big)\|\sqrt{\mathcal{M}_t}\Lambda_t^n\omega^{\mathrm{e}}\|_{L^2}\\
    &\leq C\|\sqrt{\mathcal{M}_t}\Lambda_t^n\pa_x\omega^{\mathrm{i}}\|_{L^2}\|\sqrt{\mathcal{M}_t}\Lambda_t^nu_{\neq}^{\mathrm{e},1}\|_{L^2}\|\sqrt{\mathcal{M}_t}\Lambda_t^n\omega^{\mathrm{e}}\|_{L^2}.
\end{align*}
For $I_4$, we have
\begin{align*}
    |I_4|\leq C\|\Lambda_t^n|D_x|^{\f13}\langle\pa_x\rangle^{\f13}\sqrt{\mathcal{M}_t}\th^{\mathrm{e}}\|_{L^2}\||D_x|^{\f13}\sqrt{\mathcal{M}_t}\Lambda_t^n\omega^{\mathrm{e}}\|_{L^2}.
\end{align*}
Then we can conclude that
\begin{equation}\label{ineq:w}
    \begin{aligned}
        &\f{d}{dt}\|\sqrt{\mathcal{M}_t}\Lambda_t^n\omega^{\mathrm{e}}\|_{L^2}^2+2\mathcal{CK}(\omega^{\mathrm{e}})\\
    \leq&C\big(\mathcal{CK}^{\f12}(\omega^{\mathrm{i}})+\mathcal{CK}^{\f12}(\omega^{\mathrm{e}})\big)\big(\nu^{-\f13}\delta\nu^{\f12}\mathcal{CK}^{\f12}(\omega^{\mathrm{e}})+\nu^{\f16}\nu^{-\f12}\mathcal{CK}^{\f12}(\omega^{\mathrm{e}})\delta\nu^{\f12}\big)\\
    &+C\big(C\ep_0\nu^{\f13}+\delta\nu^{\f23}\big)\nu^{-\f13}\mathcal{CK}(\omega^{\mathrm{e}})\\
    &+C\big(\delta\nu^{\f13}+\delta\nu^{\f12}\big)\big(\nu^{-\f13}\mathcal{CK}(\omega^{\mathrm{e}})+\nu^{\f23}\nu^{-\f12}\mathcal{CK}^{\f12}(\omega^{\mathrm{e}})\nu^{-\f16}\mathcal{CK}^{\f12}(\omega^{\mathrm{e}})\big)\\
    &+C\mathcal{CK}^{\f12}(\omega^{\mathrm{e}})\big(\nu^{-\f13}\delta\nu^{\f13}\mathcal{CK}^{\f12}(\omega^{\mathrm{e}})
    +\nu^{\f16}\nu^{-\f12}\mathcal{CK}^{\f12}(\omega^{\mathrm{i}})\delta\nu^{\f12}\big)+C\nu^{-\f16}\mathcal{CK}^{\f12}(\omega^{\mathrm{i}})\delta\nu^{\f12}\nu^{-\f16}\mathcal{CK}^{\f12}(\omega^{\mathrm{e}})\\
    &+C\nu^{-\f16}\mathcal{CK}^{\f12}(\omega^{\mathrm{i}})\nu^{-\f16}\mathcal{CK}^{\f12}(\omega^{\mathrm{e}})\delta\nu^{\f12}+C\nu^{-\f16}\mathcal{CK}^{\f12}(\th^{\mathrm{e}})\nu^{-\f16}\mathcal{CK}^{\f12}(\omega^{\mathrm{e}})\\
    \leq&C_{\delta_0}\delta\nu^{\f13}\mathcal{CK}(\omega^{\mathrm{i}})+C_{\delta_0}(\ep_0+\delta+K^{-1})\mathcal{CK}(\omega^{\mathrm{e}})+\f12K\nu^{-\f23}\mathcal{CK}(\th^{\mathrm{e}}).
    \end{aligned}
\end{equation}
Through a direct calculation, we know
\begin{equation}
    \begin{aligned}
        &\f{d}{dt}\|\langle\pa_x\rangle^{\f13}\sqrt{\mathcal{M}_t}\Lambda_t^n\th^{\mathrm{e}}\|_{L^2}^2+\frac{\delta_0}{2}\Big(\nu\left\|(D_x,D_y)\langle\pa_x\rangle^{\f13}\sqrt{\mathcal{M}_t}\Lambda_t^n\th^{\mathrm{e}}\right\|_{L^2}^2\\
        &\quad \quad\quad \quad\quad \quad
        +\|\langle\pa_x\rangle^{\f13}\sqrt{\Upsilon_t\mathcal{M}_t}\Lambda_t^n\th^{\mathrm{e}}\|_{L^2}^2+\nu^{\f13}\left\||D_x|^{\frac{1}{3}}\langle\pa_x\rangle^{\f13}\sqrt{\mathcal{M}_t}\Lambda_t^n\th^{\mathrm{e}}\right\|_{L^2}^2\Big)\\
    \leq&-2\langle \langle\pa_x\rangle^{\f13}\Lambda_t^n(u_{\neq}^{\mathrm{i}}\cdot\nabla\th^{\mathrm{i}}),\langle\pa_x\rangle^{\f13}\mathcal{M}_t\Lambda_t^n\th^{\mathrm{e}}\rangle
    -2\langle \langle\pa_x\rangle^{\f13}\Lambda_t^n(u^{\mathrm{i}}\cdot\nabla\th^{\mathrm{e}}),
    \langle\pa_x\rangle^{\f13}\mathcal{M}_t\Lambda_t^n\th^{\mathrm{e}}\rangle\\
    &-2\langle \langle\pa_x\rangle^{\f13}\Lambda_t^n(u^{\mathrm{e}}\cdot\nabla\th^{\mathrm{e}}),\langle\pa_x\rangle^{\f13}\mathcal{M}_t\Lambda_t^n\th^{\mathrm{e}}\rangle
    -2\langle \langle\pa_x\rangle^{\f13}\Lambda_t^n(u^{\mathrm{e}}\cdot\nabla\th^{\mathrm{i}}),
    \langle\pa_x\rangle^{\f13}\mathcal{M}_t\Lambda_t^n\th^{\mathrm{e}}\rangle\\
    =:&-J_1-J_2-J_3-J_4.
    \end{aligned}
\end{equation}
By Remark \ref{force}, we know
\begin{align*}
    |J_1|\leq C\f{\ep_0^2\nu}{1+t^2}\|\langle\pa_x\rangle^{\f13}\sqrt{\mathcal{M}_t}\Lambda_t^n\th^{\mathrm{e}}\|_{L^2}.
\end{align*}
For $J_2$, we decompose it as $J_2=2(J_{21}+J_{22})$ with
\begin{align*}
    J_{21}=\langle \langle\pa_x\rangle^{\f13}\sqrt{\mathcal{M}_t}\Lambda_t^n(u^{\mathrm{i},1}\pa_x\th^{\mathrm{e}}),
    \langle\pa_x\rangle^{\f13}\sqrt{\mathcal{M}_t}\Lambda_t^n\th^{\mathrm{e}}\rangle,\\
    J_{22}=\langle\langle\pa_x\rangle^{\f13}\sqrt{\mathcal{M}_t}\Lambda_t^n(u^{\mathrm{i},2}\pa_y\th^{\mathrm{e}}),
    \langle\pa_x\rangle^{\f13}\sqrt{\mathcal{M}_t}\Lambda_t^n\th^{\mathrm{e}}\rangle.
\end{align*}
For $J_{21}$, we have
\begin{align*}
    J_{21}=&\langle \langle\pa_x\rangle^{\f13}\sqrt{\mathcal{M}_t}\Lambda_t^n(u^{\mathrm{i},1}\pa_x\th^{\mathrm{e}})-\sqrt{\mathcal{M}_t}\Lambda_t^nu^{\mathrm{i},1}\pa_x\langle\pa_x\rangle^{\f13}\th^{\mathrm{e}},
    \langle\pa_x\rangle^{\f13}\sqrt{\mathcal{M}_t}\Lambda_t^n\th^{\mathrm{e}}\rangle\\
    &+\langle \sqrt{\mathcal{M}_t}\Lambda_t^nu^{\mathrm{i},1}\pa_x\langle\pa_x\rangle^{\f13}\th^{\mathrm{e}},
    \sqrt{\mathcal{M}_t}\Lambda_t^n\langle\pa_x\rangle^{\f13}\th^{\mathrm{e}}\rangle=:J_{211}+J_{212}.
\end{align*}
By using the fact that $\big|\langle k\rangle^{\f13}-\langle k-l\rangle^{\f13}\big||k-l|
\lesssim \big|\langle l\rangle^{\f13}+\langle k-l\rangle^{\f13}\big||l|$, and applying Plancherel's formula and Young's convolution inequality, we know
\begin{align*}
    |J_{211}|\leq&\sum_{k,l\in\mathbb Z,k\neq0}\iint\sqrt{\mathcal{M}_t(k,\xi)}\Lambda_t^n(k,\xi)\big|\langle k\rangle^{\f13}-\langle k-l\rangle^{\f13}\big||k-l||\hat{u}^{\mathrm{i},1}(l,\eta)||\hat{\th}^{\mathrm{e}}(k-l,\xi-\eta)|\\
    &\times \sqrt{\mathcal{M}_t(k,\xi)}\Lambda_t^n(k,\xi)\langle k\rangle^{\f13}\left|\overline{\hat{\th}^{\mathrm{e}}(k,\xi)}\right|d\xi d\eta\\
    \leq&\sum_{k,l\in\mathbb Z,k\neq0}\iint\sqrt{\mathcal{M}_t(k,\xi)}\Lambda_t^n(k,\xi)\big|\langle l\rangle^{\f13}+\langle k-l\rangle^{\f13}\big||l||\hat{u}^{\mathrm{i},1}(l,\eta)||\hat{\th}^{\mathrm{e}}(k-l,\xi-\eta)|\\
    &\times \sqrt{\mathcal{M}_t(k,\xi)}\Lambda_t^n(k,\xi)\langle k\rangle^{\f13}\left|\overline{\hat{\th}^{\mathrm{e}}(k,\xi)}\right|d\xi d\eta\\
    \leq&C\|\sqrt{\mathcal{M}_t}\Lambda_t^n\th^{\mathrm{e}}\|_{L^2}\|\sqrt{\mathcal{M}_t}\Lambda_t^n|D_x|^{\f13}\pa_xu^{\mathrm{i},1}\|_{L^2}\|\sqrt{\mathcal{M}_t}\Lambda_t^n|D_x|^{\f13}\th^{\mathrm{e}}\|_{L^2}\\
    &+C\|\sqrt{\mathcal{M}_t}\Lambda_t^nD_xu^{\mathrm{i},1}\|_{L^2}\|\sqrt{\mathcal{M}_t}\Lambda_t^n|D_x|^{\f13}\th^{\mathrm{e}}\|_{L^2}^2.
\end{align*}
By taking $w=\omega^{\mathrm{i}}$ and $f=\langle \pa_x\rangle^{\f13}\th^{\mathrm{e}}$ in Lemma \ref{Lem:u_0 comm-long} and \ref{Lem:I31}, we have
\begin{align*}
    |J_{212}|\leq& C\|\sqrt{\mathcal{M}_t}\Lambda_t^n\omega^{\mathrm{i}}\|_{L^2}\big(\||D_x|^{\f13}\sqrt{\mathcal{M}_t}\Lambda_t^n\langle \pa_x\rangle^{\f13}\th^{\mathrm{e}}\|_{L^2}^2\\
    &+\nu^{\f23}\|D_y\Lambda_t^n\langle \pa_x\rangle^{\f13}\th^{\mathrm{e}}\|_{L^2}\||D_x|^{\f13}\sqrt{\mathcal{M}_t}\Lambda_t^n\langle \pa_x\rangle^{\f13}\th^{\mathrm{e}}\|_{L^2}\big).
\end{align*}
Next for $J_{22}$, we have
\begin{align*}
    |J_{22}|\leq &\|\langle\pa_x\rangle^{\f13}\sqrt{\mathcal{M}_t}\Lambda_t^n(u^{\mathrm{i},2}\pa_y\th^{\mathrm{e}})\|_{L^2}\|\langle\pa_x\rangle^{\f13}\sqrt{\mathcal{M}_t}\Lambda_t^n\th^{\mathrm{e}}\|_{L^2}\\
    \leq&C\Big(\|\widehat{\pa_y\th^{\mathrm{e}}}\|_{L^1}\||D_x|^{\f13}\sqrt{\mathcal{M}_t}\Lambda_t^nu^{\mathrm{i},2}\|_{L^2}+C\|\hat{u}^{\mathrm{i},2}\|_{L^1}\|D_x|^{\f13}\sqrt{\mathcal{M}_t}\Lambda_t^nD_y\th^{\mathrm{e}}\|_{L^2}\Big)\|\langle\pa_x\rangle^{\f13}\sqrt{\mathcal{M}_t}\Lambda_t^n\th^{\mathrm{e}}\|_{L^2}\\
    \leq&C\||D_x|^{\f13}\sqrt{\mathcal{M}_t}\Lambda_t^nu^{\mathrm{i},2}\|_{L^2}\|D_x|^{\f13}\sqrt{\mathcal{M}_t}\Lambda_t^nD_y\th^{\mathrm{e}}\|_{L^2}\|\langle\pa_x\rangle^{\f13}\sqrt{\mathcal{M}_t}\Lambda_t^n\th^{\mathrm{e}}\|_{L^2}\\
    \leq&C\f{\ep_0^2\nu^{\f13}}{1+t^2}\|D_x|^{\f13}\sqrt{\mathcal{M}_t}\Lambda_t^nD_y\th^{\mathrm{e}}\|_{L^2}\|\langle\pa_x\rangle^{\f13}\sqrt{\mathcal{M}_t}\Lambda_t^n\th^{\mathrm{e}}\|_{L^2}\\
    \leq & C\f{\ep_0^2\nu^{\f12}}{1+t}\|D_x|^{\f13}\sqrt{\mathcal{M}_t}\Lambda_t^nD_y\th^{\mathrm{e}}\|_{L^2}\|\langle\pa_x\rangle^{\f13}\sqrt{\mathcal{M}_t}\Lambda_t^n\th^{\mathrm{e}}\|_{L^2}.
\end{align*}
For $J_3$, we decompose it as $J_3=2(J_{31}+J_{32}+J_{33})$, with 
\begin{align*}
    J_{31}=\langle \langle\pa_x\rangle^{\f13}\sqrt{\mathcal{M}_t}\Lambda_t^n(u^{\mathrm{e},1}_{\neq}\pa_x\th^{\mathrm{e}}),\langle\pa_x\rangle^{\f13}\sqrt{\mathcal{M}_t}\Lambda_t^n\th^{\mathrm{e}}\rangle,\\
    J_{32}=\langle \langle\pa_x\rangle^{\f13}\sqrt{\mathcal{M}_t}\Lambda_t^n(u^{\mathrm{e},1}_0\pa_x\th^{\mathrm{e}}),\langle\pa_x\rangle^{\f13}\sqrt{\mathcal{M}_t}\Lambda_t^n\th^{\mathrm{e}}\rangle,\\
    J_{33}=\langle \langle\pa_x\rangle^{\f13}\sqrt{\mathcal{M}_t}\Lambda_t^n(u^{\mathrm{e},2}_{\neq}\pa_y\th^{\mathrm{e}}),\langle\pa_x\rangle^{\f13}\sqrt{\mathcal{M}_t}\Lambda_t^n\th^{\mathrm{e}}\rangle.
\end{align*}
By Plancherel's formula, Young's convolution inequality and Lemma \ref{Lem:L1}, we have
\begin{align*}
    |J_{31}|\leq &C\Big(\|\langle\pa_x\rangle^{\f13}\sqrt{\mathcal{M}_t}\Lambda_t^n\pa_x\th^{\mathrm{e}}\|_{L^2}\|\hat{u}_{\neq}^{\mathrm{e},1}\|_{L^1}+\|\widehat{\pa_x\th^{\mathrm{e}}}\|_{L^1}\| \langle\pa_x\rangle^{\f13}\sqrt{\mathcal{M}_t}\Lambda_t^nu^{\mathrm{e},1}_{\neq}\|_{L^2}\big)\|\langle\pa_x\rangle^{\f13}\sqrt{\mathcal{M}_t}\Lambda_t^n\th^{\mathrm{e}}\|_{L^2}\\
    \leq&C(1+t)^{-1}\|\sqrt{\mathcal{M}_t}\Lambda_t^n \omega_{\neq}^{\mathrm{e}}\|_{L^2}\|\langle\pa_x\rangle^{\f13}\sqrt{\mathcal{M}_t}\Lambda_t^n\pa_x\th^{\mathrm{e}}\|_{L^2}\|\langle\pa_x\rangle^{\f13}\sqrt{\mathcal{M}_t}\Lambda_t^n\th^{\mathrm{e}}\|_{L^2}\\
    &+C\|\sqrt{\mathcal{M}_t}\Lambda_t^n\th^{\mathrm{e}}\|_{L^2}\| \sqrt{\mathcal{M}_t}\Lambda_t^n\omega^{\mathrm{e}}_{\neq}\|_{L^2}\|\langle\pa_x\rangle^{\f13}\sqrt{\mathcal{M}_t}\Lambda_t^n\th^{\mathrm{e}}\|_{L^2},
\end{align*}
and
\begin{align*}
    |J_{33}|\leq &C\big(\|\langle\pa_x\rangle^{\f13}\sqrt{\mathcal{M}_t}\Lambda_t^n\pa_y\th^{\mathrm{e}}\|_{L^2}\|\hat{u}^{\mathrm{e},2}_{\neq}\|_{L^1}+\|\widehat{\pa_y\th^{\mathrm{e}}}\|_{L^1}\| \langle\pa_x\rangle^{\f13}\sqrt{\mathcal{M}_t}\Lambda_t^nu^{\mathrm{e},2}_{\neq}\|_{L^2}\big)\|\langle\pa_x\rangle^{\f13}\sqrt{\mathcal{M}_t}\Lambda_t^n\th^{\mathrm{e}}\|_{L^2}\\
    \leq &C\|\langle\pa_x\rangle^{\f13}\sqrt{\mathcal{M}_t}\Lambda_t^n\pa_y\th^{\mathrm{e}}\|_{L^2}\|\sqrt{\mathcal{M}_t}\Lambda_t^nu^{\mathrm{e},2}_{\neq}\|_{L^2}\|\langle\pa_x\rangle^{\f13}\sqrt{\mathcal{M}_t}\Lambda_t^n\th^{\mathrm{e}}\|_{L^2}\\
    &+C\|\sqrt{\mathcal{M}_t}\Lambda_t^n\pa_y\th^{\mathrm{e}}\|_{L^2}\| \langle\pa_x\rangle^{\f13}\sqrt{\mathcal{M}_t}\Lambda_t^nu^{\mathrm{e},2}_{\neq}\|_{L^2}\|\langle\pa_x\rangle^{\f13}\sqrt{\mathcal{M}_t}\Lambda_t^n\th^{\mathrm{e}}\|_{L^2}.
\end{align*}
By taking $w=\omega_0^{\mathrm{e}}$, $f=g=\langle\pa_x\rangle^{\f13}\th^{\mathrm{e}}$ in Lemma \ref{Lem:u_0 comm-long}, we have
\begin{align*}
   |J_{32}|\leq C\|\sqrt{\mathcal{M}_t}\Lambda_t^n\omega_0^{\mathrm{e}}\|_{L^2}\|\langle\pa_x\rangle^{\f13}\sqrt{\mathcal{M}_t}\Lambda_t^n\langle\pa_x\rangle^{\f13}\th^{\mathrm{e}}\|_{L^2}^2.
\end{align*}
Here, if we treat $J_3$ in a similar way as in treating $J_2$, we can obtain a better estimate.
We decompose $J_4$ as $J_4=2(J_{41}+J_{42})$ with 
\begin{align*}
    J_{41}=\langle \langle\pa_x\rangle^{\f13}\sqrt{\mathcal{M}_t}\Lambda_t^n(u^{\mathrm{e},1}\pa_x\th^{\mathrm{i}}),
    \langle\pa_x\rangle^{\f13}\sqrt{\mathcal{M}_t}\Lambda_t^n\th^{\mathrm{e}}\rangle,\\
    J_{42}=\langle \langle\pa_x\rangle^{\f13}\sqrt{\mathcal{M}_t}\Lambda_t^n(u^{\mathrm{e},2}\pa_y\th^{\mathrm{i}}),
    \langle\pa_x\rangle^{\f13}\sqrt{\mathcal{M}_t}\Lambda_t^n\th^{\mathrm{e}}\rangle.
\end{align*}
For $J_{41}$, by Plancherel's formula and Young's convolution inequality, we have
\begin{align*}
    |J_{41}|\leq& C\|\sqrt{\mathcal{M}_t}\Lambda_t^n\omega^{\mathrm{e}}_{\neq}\|_{L^2}\|\langle\pa_x\rangle^{\f13}\sqrt{\mathcal{M}_t}\Lambda_t^n\pa_x\th_{\neq}^{\mathrm{i}}\|_{L^2}\|\langle\pa_x\rangle^{\f13}\sqrt{\mathcal{M}_t}\Lambda_t^n\th^{\mathrm{e}}\|_{L^2}\\
&+C\|\sqrt{\mathcal{M}_t}\Lambda_t^nu^{\mathrm{e},1}_{0}\|_{L^2}\|\langle\pa_x\rangle^{\f13}\sqrt{\mathcal{M}_t}\Lambda_t^n\pa_x\th^{\mathrm{i}}\|_{L^2}\|\langle\pa_x\rangle^{\f13}\sqrt{\mathcal{M}_t}\Lambda_t^n\th^{\mathrm{e}}_{\neq}\|_{L^2}.
\end{align*}
By Plancherel's formula and Young's convolution inequality, we know
\begin{align*}
    |J_{42}|
    \leq&\sum_{k,l}\iint \langle k\rangle^{\f13}\sqrt{\mathcal{M}_t(k,\xi)}\Lambda_t^n(k,\xi)|\hat{u}^{\mathrm{e},2}(l,\eta)||(\xi-\eta)\hat{\th}^{\mathrm{i}}(k-l,\xi-\eta)|\langle k\rangle^{\f13}\\
    &\times \sqrt{\mathcal{M}_t(k,\xi)}\Lambda_t^n(k,\xi)|\overline{\hat{\th}^{\mathrm{e}}(k,\xi)}|d\xi d\eta\\
    \leq&Ce^{\f12\delta_0\nu^{\f13}t}\sum_{k,l}\iint \langle k-l\rangle^{\f13}\Lambda_t^n(k-l,\xi-\eta)|\hat{u}^{\mathrm{e},2}(l,\eta)||(\xi-\eta)\hat{\th}^{\mathrm{i}}(k-l,\xi-\eta)|\langle k\rangle^{\f13}\\
    &\times\sqrt{\mathcal{M}_t(k,\xi)}\Lambda_t^n(k,\xi)|\overline{\hat{\th}^{\mathrm{e}}(k,\xi)}|d\xi d\eta\\
    &+Ce^{\f12\delta_0\nu^{\f13}t}\sum_{k,l}\iint \langle l\rangle^{\f13}\Lambda_t^n(l,\eta)|\hat{u}^{\mathrm{e},2}(l,\eta)||(\xi-\eta)\hat{\th}^{\mathrm{i}}(k-l,\xi-\eta)|\langle k\rangle^{\f13}\\
    &\times\sqrt{\mathcal{M}_t(k,\xi)}\Lambda_t^n(k,\xi)|\overline{\hat{\th}^{\mathrm{e}}(k,\xi)}|d\xi d\eta=:Ce^{\f12\delta_0\nu^{\f13}t}(J_{421}+J_{422}).
\end{align*}
By Young's inequality and taking $f=u_{\neq}^{\mathrm{e},2}$ in Lemma \ref{Lem:L1}, we have
\begin{align*}
    |J_{421}|\leq &C\|\hat{u}^{\mathrm{e},2}_{\neq}\|_{L^1}\|\langle\pa_x\rangle^{\f13}\Lambda_t^nD_y\th^{\mathrm{i}}\|_{L^2}\|\langle\pa_x\rangle^{\f13}\sqrt{\mathcal{M}_t}\Lambda_t^n\th^{\mathrm{e}}\|_{L^2}\\
    \leq&C(1+t)^{-1}\|\nabla \Lambda_t^nu^{\mathrm{e},2}_{\neq}\|_{L^2}\|\langle\pa_x\rangle^{\f13}\Lambda_t^nD_y\th^{\mathrm{i}}\|_{L^2}\|\langle\pa_x\rangle^{\f13}\sqrt{\mathcal{M}_t}\Lambda_t^n\th^{\mathrm{e}}\|_{L^2}.
\end{align*}
For $J_{422}$, by H\"older's inequlity, we know $J_{422}\leq J_{4221}^{\f12}J_{4222}^{\f12}$, with
\begin{align*}
    J_{4221}=&\sum_{k,l}\iint(l^2+\eta^2)\left|\Lambda_t^n(l,\eta)\hat{u}^{\mathrm{e},2}(l,\eta)\right|^2\Lambda_t^2(k-l,\xi-\eta)\left|(\xi-\eta)\hat{\th}^{\mathrm{i}}(k-l,\xi-\eta)\right|^2d\xi d\eta,\\
    J_{4222}=&\sum_{k,l\in\mathbb Z,l\neq0}\iint\f{\langle l\rangle^{\f23}\left|\langle k\rangle^{\f13}\sqrt{\mathcal{M}_t(k,\xi)}\Lambda_t^n(k,\xi)\overline{\hat{\th}^{\mathrm{e}}(k,\xi)}\right|^2}{(l^2+\eta^2)\Lambda_t^2(k-l,\xi-\eta)}d\xi d\eta.
\end{align*}
By Fubini's theorem, Plancherel's formula, and \eqref{eq:Mf}, we know
\begin{align*}
    |J_{4221}|\leq C\|\nabla \Lambda_t^nu^{\mathrm{e},2}\|_{L^2}^2\|\Lambda_tD_y\th^{\mathrm{i}}\|_{L^2}^2\leq C\nu^{-\f23}\|\nabla \Lambda_t^nu^{\mathrm{e},2}\|_{L^2}^2\|\sqrt{\mathcal{M}_t}\Lambda_t^n\th^{\mathrm{i}}\|_{L^2}^2.
\end{align*}
For $l\neq0$, applying \eqref{eq:as} with $a=|l|$, $s=1+|k-l|$, and $z=\xi+t(k-l)$, we get
\begin{align*}
    \int\f{\langle l\rangle^{\f23}d\eta}{(l^2+\eta^2)\Lambda_t^2(k-l,\xi-\eta)}\leq C\f{\langle l\rangle^{\f23}}{|l|(1+|k-l|)}\f{1+|k-l|+|l|}{(1+|k-l|+|l|)^2+(\xi+t(k-l))^2}.
\end{align*}
Then by the definition of $\Upsilon(t,k,\xi)$, we have
\begin{align*}
    |J_{4222}|\leq& C\sum_{k,l\in\mathbb Z,l\neq0}\int\f{\langle l\rangle^{\f23}(1+|k-l|+|l|)\left|\langle k\rangle^{\f13}\sqrt{\mathcal{M}_t(k,\xi)}\Lambda_t^n(k,\xi)\hat{\th}^{\mathrm{e}}(k,\xi)\right|^2}{|l|\big((1+|k-l|+|l|)^2+(\xi+t(k-l))^2\big)}d\xi\\
    \leq&C\|\langle\pa_x\rangle^{\f13}\sqrt{\Upsilon_t\mathcal{M}_t}\Lambda_t^n\th^{\mathrm{e}}\|_{L^2}^2.
\end{align*}
Then by the facts that $t>T_0=\nu^{-\f16}$ and \eqref{eq:Mneq}, we have
\begin{align*}
    |J_{42}|\leq& C \|\nabla \sqrt{\mathcal{M}_t}\Lambda_t^nu^{\mathrm{e},2}_{\neq}\|_{L^2}\Big(\nu^{\f16}\|\langle\pa_x\rangle^{\f13}\Lambda_t^nD_y\th^{\mathrm{i}}\|_{L^2}\|\langle\pa_x\rangle^{\f13}\sqrt{\mathcal{M}_t}\Lambda_t^n\th^{\mathrm{e}}\|_{L^2}\\
    &+\nu^{-\f13}\|\sqrt{\mathcal{M}_t}\Lambda_t^n\th^{\mathrm{i}}\|_{L^2}\|\langle\pa_x\rangle^{\f13}\sqrt{\Upsilon_t\mathcal{M}_t}\Lambda_t^n\th^{\mathrm{e}}\|_{L^2}\Big).
\end{align*}
Summing up, we have
\begin{align*}
    &\f{d}{dt}\|\langle\pa_x\rangle^{\f13}\sqrt{\mathcal{M}_t}\Lambda_t^n\th^{\mathrm{e}}\|_{L^2}^2+2\mathcal{CK}(\th^{\mathrm{e}})\\
    \leq&C\f{\ep_0^2\nu}{1+t^2}\delta\nu^{\f56}+C\delta\nu^{\f56}\nu^{-\f16}\mathcal{CK}^{\f12}(\omega^{\mathrm{i}})\nu^{-\f16}\mathcal{CK}^{\f12}(\th^{\mathrm{e}})+C\mathcal{CK}^{\f12}(\omega^{\mathrm{i}})\delta\nu^{\f56}\nu^{-\f16}\mathcal{CK}^{\f12}(\th^{\mathrm{e}})\\
    &+C\delta\nu^{\f13}\nu^{-\f13}\mathcal{CK}(\th^{\mathrm{e}})+C\delta\nu^{\f13}\nu^{\f23}\nu^{-\f12}\mathcal{CK}^{\f12}(\th^{\mathrm{e}})\nu^{-\f16}\mathcal{CK}^{\f12}(\th^{\mathrm{e}})\\
    &+C\f{\ep_0^2\nu^{\f12}}{1+t}\nu^{-\f12}\mathcal{CK}^{\f12}(\th^{\mathrm{e}})\nu^{-\f16}\mathcal{CK}^{\f12}(\th^{\mathrm{e}})+C\f{1}{1+t}\delta\nu^{\f12}\nu^{-\f16}\mathcal{CK}^{\f12}(\th^{\mathrm{e}})\nu^{-\f16}\mathcal{CK}^{\f12}(\th^{\mathrm{e}})\\
    &+C\delta\nu^{\f12}\nu^{-\f16}\mathcal{CK}^{\f12}(\th^{\mathrm{e}})\nu^{-\f16}\mathcal{CK}^{\f12}(\th^{\mathrm{e}})+C\nu^{-\f12}\mathcal{CK}^{\f12}(\th^{\mathrm{e}})\mathcal{CK}^{\f12}(\omega^{\mathrm{e}})\delta\nu^{\f56}\\
    &+C\nu^{-\f16}\mathcal{CK}^{\f12}(\th^{\mathrm{e}})\mathcal{CK}^{\f12}(\omega^{\mathrm{e}})\delta\nu^{\f56}+C\delta\nu^{\f12}\nu^{-\f13}\mathcal{CK}(\th^{\mathrm{e}})+C\delta\nu^{\f12}\nu^{-\f16}\mathcal{CK}^{\f12}(\th^{\mathrm{i}})\nu^{-\f16}\mathcal{CK}^{\f12}(\th^{\mathrm{e}})\\
    &+C\delta\nu^{\f12}\nu^{-\f16}\mathcal{CK}^{\f12}(\th^{\mathrm{i}})\nu^{-\f16}\mathcal{CK}^{\f12}(\th^{\mathrm{e}})+C\mathcal{CK}^{\f12}(\omega^{\mathrm{e}})\big(\nu^{\f16}\nu^{-\f12}\mathcal{CK}^{\f12}(\th^{\mathrm{i}})\delta\nu^{\f56}+\nu^{-\f13}\delta\nu^{\f23}\mathcal{CK}^{\f12}(\th^{\mathrm{e}})\big)\\
    \leq& C_{\delta_0}\f{\ep_0^2\delta\nu^{\f{11}{6}}}{1+t^2}+C_{\delta_0}(\delta+\ep_0^2)\mathcal{CK}(\th^{\mathrm{e}})+C_{\delta_0}\delta\nu^{\f23}\mathcal{CK}(\omega^{\mathrm{e}})+C_{\delta_0}\delta\nu\mathcal{CK}(\omega^{\mathrm{i}})+C_{\delta_0}\delta\nu^{\f13}\mathcal{CK}(\th^{\mathrm{i}}).
\end{align*}
Then combined with \eqref{ineq:w}, we obtain the results.
\end{proof}

We now conclude this section by summarizing the estimates of $\omega^{\mathrm{e}}$ and $\th^{\mathrm{e}}$. By the standard bootstrap argument and the estimates we obtained in Section \ref{sec-i} about $\omega^{\mathrm{i}}$ and $\th^{\mathrm{i}}$, we have for $t\leq \nu^{-\f16}$ that
\begin{align*}
    &\|\Lambda_t^n\omega^{\mathrm{e}}\|_{L^2}\leq \delta\nu^{\frac{2}{3}},\quad
    \nu^{\frac{1}{6}}\|\partial_x\Lambda_t^n\theta^{\mathrm{e}}(t) \|_{L^2}+\| \Lambda_t^n\theta^{\mathrm{e}}(t) \|_{L^2}\leq \delta \nu,
\end{align*}
and for $t>\nu^{-\f16}$,
\begin{align*}
    \|\Lambda_t^n\sqrt{\mathcal{M}_t}\omega^{\mathrm{e}}(t)\|_{L^{\infty}_tL^2}\leq C\epsilon_0\nu^{\f12},\quad \|\langle \partial_x\rangle^{\frac{1}{3}} \sqrt{\mathcal{M}_t}\Lambda_t^n\theta^{\mathrm{e}}(t) \|_{L^{\infty}_tL^2}\leq C\epsilon_0\nu^{\f56}.
\end{align*}
Theorem \ref{main-thm} then follows directly by adding the estimates of $(\omega^{\mathrm{i}},\theta^{\mathrm{i}})$ in section \ref{sec-i} together with the estimate of $(\omega^{\mathrm{e}},\theta^{\mathrm{e}})$, and applying Lemma \ref{Lem: ID}.

\bibliographystyle{abbrv.bst} 
\bibliography{references.bib}

\begin{thebibliography}{10}

\bibitem{bedrossian2023nonlinear}
J.~Bedrossian, R.~Bianchini, M.~C. Zelati, and M.~Dolce.
\newblock Nonlinear inviscid damping and shear-buoyancy instability in the
  two-dimensional {B}oussinesq equations.
\newblock {\em Communications on Pure and Applied Mathematics},
  76(12):3685--3768, 2023.

\bibitem{chen_Li_Wei_Zhang2018transition}
Q.~Chen, T.~Li, D.~Wei, and Z.~Zhang.
\newblock Transition threshold for the 2-{D} {C}ouette flow in a finite
  channel.
\newblock {\em Arch. Ration. Mech. Anal.}, 238(1):125--183, 2020.

\bibitem{chen2024transition}
Q.~Chen, D.~Wei, and Z.~Zhang.
\newblock {\em Transition threshold for the 3{D} {C}ouette flow in a finite
  channel}, volume 296.
\newblock American Mathematical Society, 2024.

\bibitem{zelati2024stability}
M.~Coti-Zelati, A.~Del~Zotto, and K.~Widmayer.
\newblock Stability of viscous three-dimensional stratified {C}ouette flow via
  dispersion and mixing.
\newblock {\em arXiv preprint arXiv:2402.15312}, 2024.

\bibitem{DengWuZhang2021}
W.~Deng, J.~Wu, and P.~Zhang.
\newblock Stability of {C}ouette flow for 2{D} {B}oussinesq system with
  vertical dissipation.
\newblock {\em J. Funct. Anal.}, 281(12):Paper No. 109255, 40, 2021.

\bibitem{LiMasmoudiZhao2022asymptotic}
H.~Li, N.~Masmoudi, and W.~Zhao.
\newblock Asymptotic stability of two-dimensional {C}ouette flow in a viscous
  fluid.
\newblock {\em arXiv preprint arXiv:2208.14898}, 2022.

\bibitem{Jiaxin-2024-pre}
J.~Liu.
\newblock Stability threshold of {C}ouette flow for 2{D} {B}oussinesq equations
  in rough {S}obolev spaces with lower regularity.
\newblock {\em preprint}, 2024.

\bibitem{MSZ-2022-ARMA}
N.~Masmoudi, B.~Said-Houari, and W.~Zhao.
\newblock Stability of the {C}ouette flow for a 2{D} {B}oussinesq system
  without thermal diffusivity.
\newblock {\em Archive for Rational Mechanics and Analysis}, 245(2):645--752,
  2022.

\bibitem{MasmoudiZhaiZhao2022}
N.~Masmoudi, C.~Zhai, and W.~Zhao.
\newblock Asymptotic stability for two-dimensional {B}oussinesq systems around
  the {C}ouette flow in a finite channel.
\newblock {\em Journal of Functional Analysis}, 284(1):109736, 2023.

\bibitem{masmoudizhao2022}
N.~Masmoudi and W.~Zhao.
\newblock Stability threshold of two-dimensional {C}ouette flow in {S}obolev
  spaces.
\newblock In {\em Annales de l'Institut Henri Poincar{\'e} C, Analyse non
  lin{\'e}aire}, volume~39, 2022.

\bibitem{sinambela2024asymptotic}
D.~Sinambela, W.~Zhao, and R.~Zi.
\newblock Asymptotic stability of the three-dimensional {C}ouette flow for the
  {S}tokes-transport equation.
\newblock {\em arXiv preprint arXiv:2405.12173}, 2024.

\bibitem{SZZ-2024-2d}
D.~Sinambela, W.~Zhao, and R.~Zi.
\newblock Asymptotic stability of the two-dimensional {C}ouette flow for the
  {S}tokes-transport equation in a finite channel.
\newblock {\em arXiv preprint arXiv:2405.12166}, 2024.

\bibitem{weizhang2023nonlinear}
D.~Wei and Z.~Zhang.
\newblock Nonlinear enhanced dissipation and inviscid damping for the 2{D}
  {C}ouette flow.
\newblock {\em Tunisian Journal of Mathematics}, 5(3):573--592, 2023.

\bibitem{Weizhang-slip-2024}
D.~Wei and Z.~Zhang.
\newblock Stability threshold of the 2{D} {C}ouette flow in a finite channel.
\newblock {\em preprint}, 2024.

\bibitem{ZhaiZhao2022}
C.~Zhai and W.~Zhao.
\newblock Stability threshold of the {C}ouette flow for {N}avier--{S}tokes
  {B}oussinesq system with large {R}ichardson number $\gamma^2>\frac{1}{4}$.
\newblock {\em SIAM Journal on Mathematical Analysis}, 55(2):1284--1318, 2023.

\bibitem{zhang2023stability}
Z.~Zhang and R.~Zi.
\newblock Stability threshold of {C}ouette flow for 2{D} {B}oussinesq equations
  in {S}obolev spaces.
\newblock {\em Journal de Math{\'e}matiques Pures et Appliqu{\'e}es},
  179:123--182, 2023.

\end{thebibliography}

\end{document}